\newtheorem{thm}{Theorem}
\newtheorem{lem}[thm]{Lemma}
\newtheorem{prop}[thm]{Proposition}
\newtheorem{cor}[thm]{Corollary}
\newtheorem*{cor*}{Corollary}
\newtheorem*{lem*}{Lemma}
\theoremstyle{definition}
\theoremstyle{remark}
\newtheorem{remark}{Remark} 
\newtheorem*{prfdisc*}{Discussion of Proof} 
\theoremstyle{plain}
\def\CC{{\mathbb C}}
\def\HH{{\mathbb H}}
\def\NN{{\mathbb N}}
\def\RR{{\mathbb R}}
\def\ZZ{{\mathbb Z}}
\def\Stab{{\mathrm{Stab}}}
\def\scrB{{\mathcal B}}
\def\scrC{{\mathcal C}}
\def\scrF{{\mathcal F}}
\def\scrH{{\mathcal H}}
\def\scrG{{\mathcal G}}
\def\scrI{{\mathcal I}}
\def\scrM{{\mathcal M}}
\def\scrN{{\mathcal N}}
\def\scrO{{\mathcal O}}
\def\scrP{{\mathcal P}}
\def\scrS{{\mathcal S}}
\def\scrY{{\mathcal Y}}
\def\Re{\operatorname{Re}}
\def\Im{\operatorname{Im}}
\def\dim{\operatorname{dim}}
\def\dist{\operatorname{dist}}
\def\G{\operatorname{G{}}}
\def\SL{\operatorname{SL}}
\def\SO{\operatorname{SO}}
\def\PSL{\operatorname{PSL}}
\def\supp{\operatorname{supp}}
\def\Onder#1#2#3#4#5{#1 \setbox0=\hbox{$#1$}\setbox1=\hbox{$#2$}
       \dimen0=.5\wd0 \dimen1=\dimen0 \dimen2=\dp0 \dimen3=\dimen2
       \advance\dimen0 by .5\wd1 \advance\dimen0 by -#4
       \advance\dimen1 by -.5\wd1 \advance\dimen1 by -#4
       \advance\dimen2 by -#3 \advance\dimen2 by \ht1
       \advance\dimen2 by 0.3ex \advance\dimen3 by #5
        \kern-\dimen0\raisebox{-\dimen2}[0ex][\dimen3]{\box1}
       \kern\dimen1}
\newcommand{\GaG}{\Gamma\backslash G}
\newcommand{\GaH}{\Gamma\backslash \mathbb{H}}
\newcommand{\sfrac}[2]{{\textstyle \frac {#1}{#2}}}
\newcommand{\matr}[4]{\left( \begin{matrix} #1 & #2 \\ #3 & #4 \end{matrix} \right) }
\newcommand{\smatr}[4]{\left( \begin{smallmatrix} #1 & #2 \\ #3 & #4 \end{smallmatrix} \right) }
\newcommand{\fg}{\mathfrak{g}}
\newcommand{\disG}[2]{\mathsf{dist}_G(#1,#2) }
\newcommand{\disGa}[2]{\mathsf{dist}_{\Gamma\backslash G}(#1,#2) }
\newcommand{\sYGa}{\mathcal{Y}_{\Gamma}}
\newcommand{\kthe}{k_{\theta}}
\newcommand{\delGa}{\delta_{\Gamma}}
\newcommand{\mPSGgN}{\mu_{\Gamma g N}^{\mathrm{PS}}}
\begin{document}
\title[Effective equidistribution of horocycles]{Effective equidistribution of the horocycle flow on geometrically finite hyperbolic surfaces}

\author{Samuel C. Edwards}

\address{Department of Mathematics, Yale University, New Haven 06511 CT, USA}
\email{samuel.edwards@yale.edu}

 \begin{abstract}
We prove effective equidistribution of non-closed horocycles in the unit tangent bundle of infinite-volume geometrically finite hyperbolic surfaces.  
 \end{abstract}



\date{\today}



\maketitle
\section{Introduction}\label{intro}
\subsection{Background}\label{background} Let $\scrM$ be a \emph{geometrically finite hyperbolic surface}. $\scrM$ may thus be realized as a quotient $\GaH$, where $\HH=\lbrace z\in\CC\,:\,\Im(z)>0\rbrace$ is the hyperbolic upper half-space equipped with the standard Riemannian metric $ds^2=\frac{dx^2+dy^2}{y^2}$ on which $G=\PSL(2,\RR)$ acts by orientation-preserving isometries in the form of M\"obius transformations, and $\Gamma <G$ is a finitely generated torsion-free discrete subgroup of $G$. The unit tangent bundle $T^1(\scrM)$ of $\scrM$ may be identified with the homogeneous space $\GaG$. The group $G$ acts naturally on $\GaG$ by right translation, that is
\begin{equation*}
g\cdot \Gamma h:= \Gamma h g\qquad \forall g\in G,\, \Gamma h \in \GaG.
\end{equation*}
The goal of this article is to provide quantitative information about ergodic averages of orbits (with respect to the above group action) of the horospherical subgroup $N=\lbrace n_x=\pm\smatr 1 x 0 1 \,:\, x\in\RR\rbrace<G$ on $\GaG=T^1(\scrM)$ in the case that $\scrM$ has \emph{infinite volume}. 

A hyperbolic surface $\scrM=\GaH$ as above is said to have finite volume if any (and hence \emph{every}) \emph{fundamental domain} $\scrF_{\Gamma}\subset \HH$ for $\Gamma$ satisfies $\mu_{\HH}(\scrF_{\Gamma})<\infty$, where  $\mu_{\HH}$ is the $G$-invariant Borel measure on $\HH$ given by $d\mu_{\HH}(x+i y)=\frac{dx\,dy}{y^2}$. If every fundamental domain $\scrF_{\Gamma}$ has infinite $\mu_{\HH}$-measure, then $\scrM$ is said to be of infinite volume. If $\GaH$ has finite volume then $\Gamma$ is said to be a lattice.

The classification of $N$-invariant ergodic Radon measures on $\GaG$ when $\Gamma$ is a lattice (and subsequent generalizations to orbits of unipotent subgroups in finite-volume quotients of semisimple Lie groups) has a long history going back to  Furstenberg \cite{Furst}, Veech \cite{Veech}, Dani \cite{Dani,Dani2} (amongst others), and culminating in the famous results of Ratner \cite{Ratner}.

Dani and Smillie \cite{Dani3, DaniSmillie} were the first to prove \emph{equidistribution} of $N$-orbits for general lattices in $G$: they proved that
\begin{equation}\label{ds}
\lim_{T\rightarrow\infty} \frac{1}{T}\int_0^T f(\Gamma g n_t)\,dt= m_{\Gamma g N}(f)\qquad \forall \Gamma g\in \GaG,\; f\in C_c(\GaG),
\end{equation}
where either $m_{\Gamma gN}$ is  the unique $G$-invariant Borel probability measure on $\GaG$, or $\Gamma g $ a periodic point for the $N$-action and $m_{\Gamma g N}$ is the Lebesgue measure on $\Gamma gN$ normalized so as to be a probability measure. In more recent years, there has been interest in quantifying the convergence in \eqref{ds}, i.e. bounding $\left|\frac{1}{T}\int_0^T f(\Gamma g n_t)\,dt- \mu_{\Gamma g N}(f)\right|$ by some explicit function depending on $\Gamma g$, $T$ (and $f$) that decays as $T\rightarrow\infty$. Burger \cite{Burger90} proved effective equidistribution of horocycles in compact quotients $\GaG$. More generally, one may use \emph{Margulis' thickening trick} and \emph{exponential mixing of the geodesic flow}, cf.\ e.g.\ Kleinbock and Margulis \cite[Proposition 2.4.8]{KleinMarg} to prove a similar result  for the action of horospherical subgroup acting on a compact quotient  of a semisimple Lie group. For non-compact $\GaG$ (still with $G=\PSL(2,\RR)$ and $\Gamma$ a lattice), effective equidistribution of horocycles was proved by Flaminio and Forni \cite{Flam}, and Str\"ombergsson \cite{Andjmd}. See also Sarnak and Ubis \cite{SarnakUbis} for an alternative proof for $\SL(2,\ZZ)\backslash \SL(2,\RR)$. Even more recently, McAdam \cite{McAdam} proved effective equidistribution of horospherical orbits on quotients $\SL(d,\ZZ)\backslash\SL(d,\RR)$ for $d\geq 3$.  

For infinite-volume $\GaG$ the situation is more complicated.
In \cite{Burger90}, Burger suggested that any $N$-invariant ergodic Radon measure is either a multiple of the natural Lebesgue measure on a closed $N$-orbit, or is a multiple of an explicit $N$-invariant Radon measure on $\GaG$ constructed using the \emph{Patterson measure} on the \emph{limit set} of $\Gamma$. Furthermore, for \emph{convex-cocompact} $\Gamma$ with \emph{critical exponent} greater than one half, it is proved in \cite{Burger90} that this is indeed the case. Roblin \cite{Roblin} subsequently generalized Burger's construction to general CAT(-1) spaces, and associated it with mixing properties of the geodesic flow on these spaces. Further work by Winter \cite{Winter} (building on Roblin's results) confirmed that in the general setting of a rank one simple linear Lie group, every invariant ergodic Radon measure for a horospherical subgroup $H$ is indeed either the natural projection of the Haar measure on $H$ onto a closed $H$-orbit, or is given by the constructions of Burger and Roblin (or scalar multiples of these). This construction is now called the \emph{Burger-Roblin measure}.

Returning to the case of geometrically finite hyperbolic surfaces, we now recount what is known regarding the equidistribution of horocycles in $\GaG$. In addition to the classification of $N$-invariant Radon measures on $\GaG$ for $\Gamma$ convex cocompact with critical exponent greater than one half, Burger also proved \cite[Corollary of Theorem 1]{Burger90}: an equidistribution result in the form of a ratios ergodic theorem of Ces\`{a}ro averages of two-sided integrals along $N$-orbits for \emph{all} points in $\GaG$ whose $N$-orbit is not closed. Schapira \cite{Schap,SchapIMRN,Schaphalf}
 generalized and strengthened Burger's results: she proved ratio ergodic theorems for one and two-sided averages along all non-closed horocycles in the unit tangent bundle of a geometrically finite surface of pinched negative curvature. We recall that in infinite-volume ergodic theory, ratios ergodic theorems are perhaps the most natural to consider. This is related to the fact that one cannot normalize the integrals $\int_0^T f(\Gamma g n_t)\,dt$ or $\int_{-T}^Tf(\Gamma g n_t)\,dt$ uniformly over almost all $\Gamma g \in \GaG$ (with respect to the natural $G$-invariant measure on $\GaG$ induced by the Haar measure on $G$) so that the integrals converge towards $\mu_{\Gamma g N}(f)$, cf.\ \cite{Aaron1,Aaron2}.
 
Nevertheless, if one allows the normalizing factor to depend on the starting point, one \emph{can} obtain ``classical" equidistribution statements for all starting points on non-closed horocycles. It turns out that the correct normalizing factor is the so-called \emph{Patterson-Sullivan measure} on the horocycle orbit. Maucourant and Schapira \cite{MauSchap} proved this type of equidistribution for two-sided averages along all non-closed horocycles. In \cite{MoOh}, Mohammadi and Oh proved a generalization of this to non-closed horospheres in geometrically finite quotients of $\SO_0(n,1)$ for all $n\geq 2$.  Our main results, Theorems \ref{mainthm} and \ref{mainthm2}, strengthen the equidistribution result of \cite{MauSchap} further; we make it \emph{effective}, that is to say: we give a quantitative bound on the difference between a normalized integral of a test function along a non-closed horocycle and the Burger-Roblin measure of the function that decays as one lets the piece of the horocycle grow in a symmetric manner. 
\subsection{The limit set and critical exponent} Before stating our results, we first recall some important aspects of dynamics on infinite volume geometrically finite hyperbolic surfaces. We refer the reader to \cite{Schaphoro} for a more thorough exposition and further references for this material.

We start by recalling the definitions of the \emph{limit set} and \emph{critical exponent} of $\Gamma$. Let $\partial_{\infty}\HH$ denote the geometric boundary of $\HH$; i.e.\ $\partial_{\infty}\HH=\RR\cup\lbrace\infty\rbrace$. The action of $G$ has a unique continuous extension to $\partial_{\infty}\HH$ given by
\begin{equation*}
g\cdot z= \sfrac{a z + b}{cz+d}\qquad \forall g=\pm\smatr a b c d \in G,\; z\in \HH\cup\partial_{\infty}\HH.
\end{equation*}
The \emph{limit set} of $\Gamma$ is denoted $\Lambda(\Gamma)$. This is the closed, $\Gamma$-invariant, subset of $\partial_{\infty}\HH$ defined by
\begin{equation*}
\Lambda(\Gamma):=\lbrace u\in\partial_{\infty}\HH\,:\, \exists\,\lbrace\gamma_n\rbrace_{n=1}^{\infty} \subset \Gamma \;\mathrm{such\;that\;}\lim_{n\rightarrow\infty}\gamma_n\cdot i= u\rbrace.
\end{equation*}
The metric on $\HH$ induced from $ds$ is denoted $\dist$; hence, given $z,w\in\HH$, $\cosh(\dist(z,w))=1+\frac{|z-w|^2}{2\Im(z)\Im(w)}$. Using this, we define the \emph{critical exponent} $\delGa$ of $\Gamma$ by
\begin{equation*}
\delGa:=\inf \bigg\lbrace s\in\RR\,:\, \sum_{\gamma\in\Gamma} e^{-s\dist(\gamma\cdot i,i)}<\infty\bigg\rbrace.
\end{equation*}
If $\Lambda(\Gamma)$ consists of  more than two points then it must in fact be an infinite set. 
We distinguish between these two cases by saying that $\Gamma$ is \emph{elementary} if $\Lambda(\Gamma)$ consists of at most two points, and \emph{non-elementary} otherwise. It will be the non-elementary groups $\Gamma$ that will be of most interest to us.

Beardon \cite{Beardon}, Patterson \cite{Patterson2}, and Sullivan \cite{Sul1} all studied connections between $\Lambda(\Gamma)$ and $\delGa$ for infinite volume $\scrM=\GaH$, and their higher-dimensional generalisations. An important consequence of their work is that in this case, $\delGa=\dim_{\mathrm{Haus}}(\Lambda(\Gamma))\in(0,1)$.

The points of the limit set may be classified further: $\Lambda(\Gamma)$ consists of the \emph{parabolic fixed points} and \emph{radial limit points} of $\Gamma$. A point $u\in\partial_{\infty}\HH$ is a parabolic fixed point (abbreviated \emph{pfp}) if $\Stab_{\Gamma}(u)$ is conjugate to $\pm\smatr{1}{\ZZ}{0}{1}$, and is a radial limit point if there exists a \emph{geodesic ray} $\scrG\subset \HH$ \emph{tending to} $u$, a sequence $\lbrace \gamma_j\rbrace_{j=1}^{\infty}$, and $r>0$ such that $\lim_{j\rightarrow\infty}\gamma_j\cdot i =u$ and $\dist(\scrG,\gamma_j\cdot i)<r$ for all $j=1,2,\ldots$. The set of pfps of $\Gamma$ is denoted by $\Lambda_{\mathrm{pfp}}(\Gamma)$ and the set of radial limit points is denoted $\Lambda_{\mathrm{rad}}(\Gamma)$. As such,
\begin{equation*}
\Lambda(\Gamma)=\Lambda_{\mathrm{pfp}}(\Gamma)\cup\Lambda_{\mathrm{rad}}(\Gamma),
\end{equation*}
where the union is disjoint. We observe that $\Lambda_{\mathrm{pfp}}(\Gamma)$ and $\Lambda_{\mathrm{rad}}(\Gamma)$ are both $\Gamma$-invariant.

Another subgroup of $G$ that will be of importance to us is 
\begin{equation*}
A=\left\lbrace a_y=\pm\smatr{\sqrt{y}}{0}{0}{\frac{1}{\sqrt{y}}}\,:\, y\in\RR_{>0}\right\rbrace.
\end{equation*}
This subgroup is closely related to geodesics in $\HH$: given $u_1\neq u_2\in \partial_{\infty}\HH$, the geodesic  from $u_1$ tending to $u_2$ is given by $\lbrace ga_y\cdot i\,:\, y\in\RR_{>0}\rbrace$, where $g\in G$ is such that $g\cdot 0=u_1$ and $g\cdot \infty=u_2$.

We also use $A$ to define the \emph{forward and backwards visual points} of $g\in G$, $[g]^+$ and $[g]^-$, as follows:
\begin{equation*}
[g]^+:=\lim_{y\rightarrow\infty} ga_y\cdot i\in\partial_{\infty}\HH,\qquad [g]^-:=\lim_{y\rightarrow0} ga_y\cdot i\in\partial_{\infty}\HH.
\end{equation*}
Let $G_{\mathrm{rad}}$ and $G_{\mathrm{pfp}}$ denote the subsets of $G$ defined by
\begin{equation*}
G_{\mathrm{rad}}=\lbrace g\in G\,:\, [g]^+\in \Lambda_{\mathrm{rad}}(\Gamma)\rbrace,\qquad G_{\mathrm{pfp}}=\lbrace g\in G\,:\, [g]^+\in \Lambda_{\mathrm{pfp}}(\Gamma)\rbrace.
\end{equation*}
Since $\Lambda_{\mathrm{rad}}(\Gamma)$  and $\Lambda_{\mathrm{pfp}}(\Gamma)$ are both $\Gamma$-invariant, we may define subsets of $\GaG$ by
\begin{align*}
&\GaG_{\mathrm{rad}}:=\lbrace \Gamma g\in \GaG\,:\, g\in G_{\mathrm{rad}}\rbrace,\quad \GaG_{\mathrm{pfp}}:=\lbrace \Gamma g\in \GaG\,:\, g\in G_{\mathrm{pfp}}\rbrace,
\\&\qquad\qquad\qquad\qquad\quad \GaG_{\mathrm{wand}}:=\GaG\setminus\big(\GaG_{\mathrm{rad}}\cup\GaG_{\mathrm{pfp}}\big).
\end{align*}
The identity
\begin{equation*}
a_y n_t a_y^{-1}=n_{yt}\qquad \forall y\in \RR_{>0}, t\in \RR,
\end{equation*}
is of fundamental importance in the study of the dynamics of the $N$-action on $\GaG$. In particular, observe that 
\begin{equation*}
[gn_t]^+=[g]^+\qquad \forall g\in G,\;t\in \RR,
\end{equation*}
so the sets $\GaG_{\mathrm{rad}}$, $\GaG_{\mathrm{pfp}}$, and $\GaG_{\mathrm{wand}}$ are all $N$-invariant. These sets in fact characterize the $N$-orbits as follows:
\begin{enumerate}
\item $\Gamma g \in \GaG_{\mathrm{pfp}}$ $\Leftrightarrow$ $\Gamma g$ is $N$-periodic, i.e., there exists $t_0>0$ such that $\Gamma g n_{t_0}=\Gamma g $.

\item $\Gamma g \in \GaG_{\mathrm{wand}}$ $\Leftrightarrow$ $\Gamma g$ is \emph{not} $N$-periodic and $\overline{\Gamma g N}=\Gamma g N $.

\item $\Gamma g \in \GaG_{\mathrm{rad}}$ $\Leftrightarrow$ $\overline{\Gamma g N}= \GaG_{\mathrm{rad}} \cup \GaG_{\mathrm{pfp}}$.
\end{enumerate}
It is case (3) that we will be concerned with: the Burger-Roblin measure is supported on $\GaG_{\mathrm{rad}} \cup \GaG_{\mathrm{pfp}}$, and (as stated above) we intend to show the stronger statement that in this case, the $N$-orbits become equidistributed in $\GaG_{\mathrm{rad}} \cup \GaG_{\mathrm{pfp}}$ in a quantifiable manner.

We conclude this section by recalling the definition of the \emph{convex core} of $\GaG$ and \emph{convex cocompact} $\Gamma$. Let $\mathrm{hull}(\Gamma)\subset \HH$ denote the convex hull of $\Lambda(\Gamma)$, that is: $\mathrm{hull}(\Gamma)$ is the smallest (hyperbolic) convex subset of $\HH$ containing all geodesics with both endpoints in $\Lambda(\Gamma)$. Since $\Lambda(\Gamma)$ is $\Gamma$-invariant, $\mathrm{hull}(\Gamma)$ is as well. This allows us to define a subset $\mathrm{core}(\scrM)\subset \scrM=\GaH$ by
\begin{equation*}
\mathrm{core}(\scrM):=\Gamma\backslash \mathrm{hull}(\Gamma).
\end{equation*}
Observe that if $[g]^+$ and $[g]^-$ are both in $\Lambda(\Gamma)$, then $g\cdot i\in \mathrm{hull}(\Gamma)$. Since $\scrM$ is geometrically finite, $\mathrm{core}(\scrM)$ may be written as the (disjoint) union of a  compact set and at most a finite number of cuspidal regions. If $\mathrm{core}(\scrM)$ has no cusps, then $\Gamma$ is said to be \emph{convex cocompact}. Observe that $\Gamma$ is convex cocompact if and only if $\Lambda_{\mathrm{pfp}}(\Gamma)=\emptyset$, which is equivalent to $\Gamma$ having no parabolic elements. Finally, we recall the following result of Beardon \cite{Beardon}: if $\Gamma$ is \emph{not} convex cocompact, then $\delGa>\frac{1}{2}$.
\subsection{Main results}\label{resultsec}  In this section, we state the main results of this paper: Theorems \ref{mainthm} and \ref{mainthm2}. In order to do this we first introduce some more notation.

Firstly, we let $\sYGa$ denote the \emph{invariant height function} on $\GaG$. The stringent definition of $\sYGa$ will be given in Section \ref{Ydefsec}; for now we simply state some of its properties. Our interest in $\sYGa$ comes from the fact that for $\Gamma g \in \GaG$, $\sYGa(\Gamma g)$  measures ``how far" into a cusp of $\GaG$ the point $\Gamma g$ lies. This is made more precise as follows: $\sYGa$ is continuous and $\RR_{\geq 1}$-valued. For convex cocompact $\Gamma$, we have $\sYGa(\Gamma g) =1$ for all $\Gamma g\in\GaG$. For non-convex cocompact $\Gamma$, we use the hyperbolic metric $\dist$ on $\HH$ to define a metric $\mathsf{dist}_{\GaG}$ on $\GaG$ by
\begin{equation*}
\mathsf{dist}_{\GaG}(\Gamma g, \Gamma h):= \inf_{\gamma\in\Gamma}\dist(\gamma g\cdot i,h\cdot i)\qquad \forall g,\,h\in G.
\end{equation*}
We then have (cf.\ Proposition \ref{Yprops}): if $\sYGa(\Gamma g)> 1$, then $\Gamma g$ belongs to a \emph{cuspidal neighbourhood} in $\GaG$, and there exist constants $0<c_0<c_1$ such that
\begin{equation*}
c_0 e^{\mathsf{dist}_{\GaG}(\Gamma g, \Gamma e)} \leq \sYGa(\Gamma g) \leq c_1 e^{\mathsf{dist}_{\GaG}(\Gamma g, \Gamma e)}
\end{equation*}  
for all $g\in \G$ such that $\sYGa(\Gamma g)>1$.

The invariant height function will be used to quantify the speed at which the $A$-action moves elements of $\GaG$ into the cusps. This quantity will in turn govern the rate of equidistribution of the horocycles. In connection with this, we need to introduce a norm that controls the growth of functions in the cusps of $\GaG$. For $\alpha\geq 0$, define $\|\cdot\|_{N^{\alpha}}$ by
\begin{equation*}
\|f\|_{N^{\alpha}}:= \sup_{x\in\GaG} \frac{|f(x)|}{\sYGa(x)^{\alpha}} \qquad \forall f\in C(\GaG),\,\alpha\geq 0.
\end{equation*}
We let $\scrB^{\alpha}$ denote the subspace of $C(\GaG)$ consisting of functions with finite $N^{\alpha}$-norm. Observe that If $\alpha_1 \leq \alpha_2$, then $\|f\|_{N^{\alpha_1}}\geq \|f\|_{N^{\alpha_2}}$. In addition to the norm $\|\cdot\|_{N^{\alpha}}$, we will also require Sobolev norms of functions on $\GaG$. Letting $K=\mathrm{PSO}(2)$, we recall that we have the Iwasawa decompositions  $G=NAK$ and $G=KAN$. The decomposition $G=NAK$ may be used to decompose the Haar measure $\mu_G$ on $G$ as $d\mu_G(n_x a_y k)=\frac{dx\, dy\,d\mu_K(k) }{y^2}$, where $\mu_K$ is the Haar probability measure on $K$. We denote the natural projection of $\mu_G$ on $\GaG$ by $\mu_{\GaG}$. Since $\scrM$ has infinite volume, $\mu_{\GaG}$ is an infinite measure. In Section \ref{sobsec} we define $L^2(\GaG)=L^2(\GaG,\mu_{\GaG})$-Sobolev norms $\|\cdot\|_{\scrS^m((\GaG)}$ on functions on $\GaG$. The space of all functions $f$ on $\GaG$ such that $\|f\|_{\scrS^m(\GaG)}<\infty$ is denoted $\scrS^m(\GaG)$-this space essentially consists of all functions in $L^2(\GaG)$ with all Lie derivatives up to (and including) order $m$ also in $L^2(\GaG)$. 

Another quantity that affects the rate of convergence is the \emph{spectral gap}. We briefly recall some aspects of the spectral theory of the Laplace-Beltrami operator $\Delta=y^{-2}(\partial_y^2+\partial_x^2)$ on $L^2(\scrM)$ (the measure on $\scrM$ being the natural projection of $\mu_{\HH}$ to $\scrM$), due to Patterson \cite{Patterson1,Patterson2}. Firstly, the spectrum of $-\Delta$ in the interval $[0,\frac{1}{4})$ consists of finitely many (discrete) eigenvalues, and denoting these by $\lambda_i$, $i=0,\ldots,I$, we have
\begin{equation*}
0<\delGa(1-\delGa)=\lambda_0<\lambda_1 \leq \ldots\leq \lambda_I.
\end{equation*}
We define $s_1\in [\frac{1}{2},\delGa)$ by
\begin{equation*}
s_1:=\begin{cases} \sfrac{1}{2}\qquad & \mathrm{if\;}I=0\\ \sfrac{1}{2}+\sqrt{\sfrac{1}{4}-\lambda_1}\qquad&\mathrm{otherwise}.\end{cases}
\end{equation*}
Observe that $\frac{1}{2}\leq s_1<\delGa$. This will be important in Theorem \ref{mainthm}.

Finally, we introduce notation for both three measures that appear in our equidistribution statements. Given a radial point $\Gamma g \in \GaG_{\mathrm{rad}}$, the Patterson-Sullivan measure on $\Gamma g N$ is denoted $\mPSGgN$; we give the precise definition of this in Section \ref{PS-N}. Since $\Gamma g \in \GaG_{\mathrm{rad}}$, the map from $\RR$ to $\GaG$ given by $t\mapsto \Gamma g n_t$ is injective, allowing us to also view $\mPSGgN$ as a measure on $\RR$. This will be done throughout the article (often without comment). We let $B_T:=\lbrace t\in \RR\,:\,|t|\leq T\rbrace$. Using the notation just introduced, we have
\begin{equation*}
\mPSGgN(B_T)=\mPSGgN\big(\lbrace \Gamma g n_t\,:\, |t|\leq T\rbrace \big).
\end{equation*}
The Burger-Roblin measure on $\GaG$ is denoted $m_{\Gamma}^{\mathrm{BR}}$. Again, we postpone the precise definition of $m_{\Gamma}^{\mathrm{BR}}$ until later, cf.\ Section \ref{BRmsec}. For now, we recall from Section \ref{background} that $m_{\Gamma}^{\mathrm{BR}}$ is the unique (up to scaling) $N$-invariant Radon measure on $\GaG$ that is not supported on a closed horocycle. The last measure we need is the \emph{Bowen-Margulis-Sullivan,} or $\mathrm{BMS}$-measure on $\GaG$. This measure is denoted $m_{\Gamma}^{\mathrm{BMS}}$ We will actually not be required to carry out any calculations using the BMS-measure; it occurs solely as a normalizing factor in the main term of our equidistribution statements. The main fact we note about the BMS-measure on $\GaG$ is that it is finite: $m_{\Gamma}^{\mathrm{BMS}}(\GaG)<\infty$.

We can now state our main theorem:
\begin{thm}\label{mainthm}
Assume $\Gamma<G$ is geometrically finite and $\frac{1}{2}<\delGa<1$. Let $\Omega\subset \GaG$ be compact and $\alpha\in [0,\frac{1}{2})$. Then for all $\Gamma g\in \Omega\cap\GaG_{\mathrm{rad}}$, $f\in \scrS^4(\GaG)\cap\scrB^{\alpha}$, and $T\gg_{\Omega} 1$,
\begin{align*}
\frac{1}{\mPSGgN(B_T)}\!\!\int_{-T}^T \!f(\Gamma g n_t)\,dt\!= &\sfrac{m_{\Gamma}^{\mathrm{BR}}(f)}{m_{\Gamma}^{\mathrm{BMS}}(\GaG)}\!+\!O_{\Gamma,\Omega,\alpha}\!\Bigg(\!\|f\|_{\scrS^4(\GaG)}\bigg\lbrace\!\!\!\left(\sfrac{\sYGa(\Gamma g a_T)}{T}\right)^{\delGa-\frac{1}{2}}\!\log^3\!\left(\!2+\sfrac{T}{\sYGa(\Gamma g a_T)}\right)\!\!\!
\\&\qquad\qquad\qquad\qquad+\left(\sfrac{\sYGa(\Gamma g a_T)}{T}\right)^{\delGa-s_1}\!\!\!\bigg\rbrace+ \|f\|_{N_{\alpha}}\left(\sfrac{\sYGa(\Gamma g a_T)}{T}\right)^{\delGa-\frac{1}{2}} \!\!\Bigg).
\end{align*}
\end{thm}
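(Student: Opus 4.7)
My plan is to follow the Margulis thickening paradigm, combined with effective mixing of the geodesic flow $\{a_y\}_{y \in \RR_{>0}}$ on $\GaG$ with respect to the Bowen--Margulis--Sullivan measure $m_\Gamma^{\mathrm{BMS}}$. The two summands in the error---the $(\sYGa(\Gamma g a_T)/T)^{\delGa - 1/2}$ term with the $\log^3$ factor, and the $(\sYGa(\Gamma g a_T)/T)^{\delGa - s_1}$ term---are exactly what one would expect from the continuous spectrum of the Laplacian $\Delta$ on $\scrM$ (starting at $\tfrac{1}{4}$, contributing the $\tfrac{1}{2}$) and from the discrete eigenvalues (the next being $\lambda_1 = s_1(1-s_1)$) respectively, entering as the effective mixing rates for $a_t$.

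\textbf{Renormalization.} Using $a_T n_t a_T^{-1} = n_{Tt}$, I first rewrite
$$\int_{-T}^T f(\Gamma g n_\tau) d\tau = T \int_{-1}^1 f(\Gamma g a_T n_t a_T^{-1}) dt.$$
This converts the study of a long horocycle arc at $\Gamma g$ to that of a unit-length arc at the translated point $\Gamma g a_T$, whose position in the cusps is measured by $\sYGa(\Gamma g a_T)$.

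\textbf{Thickening and mixing.} Pick $\eta > 0$ (to be optimized) and let $\Psi_\eta$ be a smooth non-negative bump, normalized to total mass $1$, supported on an $\eta$-neighborhood of the identity in the transverse subgroup $AN^-$ (where $N^- = \{\pm\smatr{1}{0}{s}{1}\,:\, s \in \RR\}$ is the opposite horocycle subgroup). Using $f(xh) - f(x) = O(\eta \|f\|_{\scrS^1})$ for $h$ in the support of $\Psi_\eta$, I replace the renormalized integral by its thickening,
$$T \int_{-1}^1 \int_{AN^-} f(\Gamma g a_T n_t a_T^{-1} h) \Psi_\eta(h) dh dt + O(\eta T \|f\|_{\scrS^1}).$$
Via $NAN^-$-coordinates near the identity in $G$, the thickened $3$-dimensional integral becomes a Haar-measure average of $f$ over a small tube around $\Gamma g a_T$. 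Decomposing Haar transverse to $N$-orbits and comparing with the Burger--Roblin disintegration (Lebesgue on $N$-orbits) then identifies the main-term normalization as the transverse Patterson--Sullivan density; this, pulled back by $a_T$ using the quasi-invariance of PS measures under the geodesic flow, yields $\mPSGgN(B_T)$ up to a cusp correction tracked by $\sYGa(\Gamma g a_T)$. Effective mixing of $a_t$ with respect to $m_\Gamma^{\mathrm{BMS}}$---of the form
$$\int F(x a_t) G(x) dm_\Gamma^{\mathrm{BMS}}(x) = \frac{m_\Gamma^{\mathrm{BMS}}(F) m_\Gamma^{\mathrm{BMS}}(G)}{m_\Gamma^{\mathrm{BMS}}(\GaG)} + O\bigl(\|F\|_{\scrS^4} \|G\|_{\scrS^4} [t^3 e^{-(\delGa - 1/2)t} + e^{-(\delGa - s_1)t}]\bigr),$$
taken as input (established for geometrically finite $\Gamma$ with $\delGa > 1/2$ via spectral theory of $\Delta$ and Patterson--Sullivan theory)---applied with $t = \log T$, $F = f$, and $G$ a box bump around $\Gamma g$, delivers the main term $m_\Gamma^{\mathrm{BR}}(f)/m_\Gamma^{\mathrm{BMS}}(\GaG)$ modulo errors decaying like $T^{-(\delGa - 1/2)} \log^3 T$ and $T^{-(\delGa - s_1)}$.

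\textbf{Cusp corrections and main obstacle.} When $\Gamma g a_T$ lies deep in a cusp the injectivity radius of $\GaG$ at $\Gamma g a_T$ is $\sim \sYGa(\Gamma g a_T)^{-1}$, so the admissible thickening scale $\eta$ must shrink proportionally; rescaling the argument by $\sYGa(\Gamma g a_T)$ replaces $1/T$ by $\sYGa(\Gamma g a_T)/T$ throughout. Balancing the thickening error $O(\eta T \|f\|_{\scrS^1})$ against the mixing error (which incurs negative Sobolev powers of $\eta$ via $\|G\|_{\scrS^4}$), and dividing by $\mPSGgN(B_T) \asymp T^{\delGa}$, produces the $\|f\|_{\scrS^4}$-part of the claimed bound. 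The $\|f\|_{N^\alpha}$-term is handled separately by a crude direct estimate on the contribution of cuspidal pieces of $f$, using the PS mass of horocycle excursions into cusps (again governed by $\sYGa$). The principal difficulty I anticipate is precisely this uniform-in-cusp treatment of the geodesic trajectory $\{\Gamma g a_y\}_{1 \leq y \leq T}$: cusp excursions cause both the admissible thickening scale and the PS density on the horocycle arc to fluctuate nontrivially, and these must be tracked simultaneously with the error balancing. The $\log^3$ correction typically arises from a dyadic decomposition over distinct cusp excursion depths, while keeping the two rates $\delGa - \tfrac{1}{2}$ and $\delGa - s_1$ cleanly separated requires handling continuous and discrete spectral contributions distinctly throughout.
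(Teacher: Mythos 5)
Your proposal takes a genuinely different route from the paper. The paper does not use Margulis thickening or BMS-mixing at all to prove Theorem \ref{mainthm}; instead it decomposes $f = f_0 + f_1$ according to the orthogonal splitting $L^2(\GaG) = \scrC_0 \oplus \scrH_1$. The projection $f_0$ onto the base complementary series $\scrC_0$ is expanded in the base eigenfunctions $\phi_n$, for which one has explicit Patterson--Sullivan integral formulas (Proposition \ref{phinexp}); the horocycle integral of each $\phi_n$ is then computed directly via shadow-lemma estimates (Theorem \ref{phiequi}). The orthogonal complement $f_1 \in \scrH_1$ is handled by Str\"ombergsson's representation-theoretic method (Theorem \ref{StromEfEq}). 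The $\log^3$ factor comes from the continuous (tempered) spectrum in Str\"ombergsson's argument, not from a dyadic decomposition over cusp depths as you guess, and the $\delGa - s_1$ rate comes from the discrete eigenvalues $\lambda_1,\dots,\lambda_I$ in $\scrH_1$, not from mixing rates. The paper reserves the thickening-plus-mixing argument you sketch for the convex cocompact case (Theorem \ref{mainthm2}), precisely because there the cusp complications you flag do not arise.

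There is a genuine gap in your proposal. Your key input---an effective BMS-mixing statement for geometrically finite $\Gamma$ with cusps and $\delGa > \tfrac{1}{2}$, with cusp-weighted Sobolev control on $F$ and $G$---is asserted as ``taken as input (established via spectral theory),'' but that translation is not at all routine in the cusped infinite-volume setting, and the paper does not prove or cite such a statement. Moreover the place where you say ``the principal difficulty I anticipate is precisely this uniform-in-cusp treatment'' is not a difficulty to be tracked, it is the crux: as $\Gamma g a_T$ enters a cusp the injectivity radius collapses like $\sYGa(\Gamma g a_T)^{-1}$, so the thickening scale $\eta$ must shrink proportionally, and then the Sobolev norm $\|G\|_{\scrS^4}$ of your transverse bump blows up like $\eta^{-11/2}$; balancing against the thickening error $O(\eta T)$ and dividing by $\mPSGgN(B_T)$ does \emph{not} obviously reproduce the sharp exponents $\delGa - \tfrac12$ and $\delGa - s_1$ uniformly in $\sYGa(\Gamma g a_T)$ --- the naive balance loses a power. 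A further unresolved point, already delicate even in the convex cocompact Section \ref{deltalessthan1/2} (following Mohammadi--Oh), is that Haar and Lebesgue give much more mass to the tube than the PS and BR measures do, so one cannot simply bound the thickened approximants in sup-norm before invoking mixing; one must use the $NAU$ disintegration of $\widetilde m^{\mathrm{BR_*}}_\Gamma$ (cf.\ Lemma \ref{measdecomp}) to keep PS mass in play throughout, and you have not addressed how to do this in the presence of cusps. As written, the proposal identifies the right obstacle but does not overcome it.
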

We make some remarks:
\begin{remark}
The reason that this is an effective equidistribution statement for all radial starting points is that $\lim_{T \rightarrow\infty} \frac{\sYGa( \Gamma g a_T)}{T} =0$ for all $\Gamma g \in\GaG_{\mathrm{rad}}$. This is due to  the fact that if $\Gamma g\in \GaG_{\mathrm{rad}}$ then the geodesic segment $\Gamma g a_y$ ($y\geq 1$) returns infinitely often to some compact subset of $\GaG$ (combined with Proposition \ref{Yprops} \textit{(2)}). Theorem \ref{mainthm} thus shows that the speed of equidistribution of $\Gamma g B_T$ is governed by the cuspidal excursion rate of $\Gamma g a_T$; this is completely analogous to the situation for non-compact finite-volume quotients $\GaG$, cf.\ \cite[Theorem 1]{Andjmd}. We recall that excursion rates for geodesics are well-studied and related to approximation problems for $\Gamma$-orbits. For finite-volume $\GaG$, one has Sullivan's logarithm law \cite{Sullog} and Meli\'an and Pestana's computation of the Hausdorff dimension of the set of directions in $T^1(\scrM)$ around a given point of $\scrM$ with cuspidal excursion rate greater than a given number \cite{Melian}. In the case that $\GaG$ has infinite volume, there exist corresponding results due to Stratmann and Velani \cite{StratVel} and Hill and Velani \cite{HillVel}. 
\end{remark}
\begin{remark}
The measure $m_{\Gamma}^{\mathrm{BR}}$ is a priori only defined on $C_c(\GaG)$. However, (as will be seen in the proof of Theorem \ref{mainthm}) it does have a (unique) extension as a distribution on $\GaG$ to a linear functional on $\scrS^1(\GaG)$ (cf.\ \cite[Theorem 7.3]{LeeOh}).
\end{remark}
\begin{remark}
An interesting feature of Theorem \ref{mainthm} is that it holds for quite general functions on $\GaG$. Most previous equidistribution results for infinite-volume $\GaG$ require the test functions to be bounded or have compact support.
\end{remark}

\begin{remark}
The dependencies on the compact set $\Omega\subset \GaG$ come solely from a lower bound on $\mPSGgN(B_T)$, cf.\ Proposition \ref{muPSbd} and Corollary \ref{BTCOMP}. 
\end{remark}

A key part of the proof of Theorem \ref{mainthm} consists of calculating integrals of the \emph{base eigenfunction} along pieces of horocycles. The base eigenfunction is in $L^2(\GaG)$ if and only if $\delGa>\frac{1}{2}$. This is the reason for the requirement $\delGa>\frac{1}{2}$ in Theorem \ref{mainthm}. We recall that for $\delGa\leq\frac{1}{2}$, $\Gamma$ is convex-cocompact. This allows us to use \emph{exponential mixing} (we refer the reader to the beginning of Section \ref{deltalessthan1/2} for a more thorough discussion of these matters) and Margulis' thickening trick to also prove effective equidistribution of horocycles without the assumption $\delGa>\frac{1}{2}$. Before stating our result in this direction we introduce some more spaces of functions. For a compact subset $\Omega\subset\GaG$, let $\scrS^m(\Omega)$ denote the closure of \begin{equation*}
\lbrace f\in C_c^{\infty}(\GaG)\,:\, \supp f\subset \Omega,\,\mathrm{and}\, f|_{\partial\Omega}=0\rbrace
\end{equation*}
with respect to $\|\cdot\|_{\scrS^m(\GaG)}$.

Our effective equidistribution result for $\Gamma$ with $\delGa\leq \frac{1}{2}$ reads
\begin{thm}\label{mainthm2}
Let $\Gamma$ be non-elementary and convex cocompact. There exists $\eta_{\Gamma}>0$ such that for any compact subset $\Omega \subset \GaG$ and $\Gamma g\in \Omega\cap\GaG_{\mathrm{rad}}$,
\begin{equation*}
\frac{1}{\mPSGgN(B_T)}\!\!\int_{-T}^T \!f(\Gamma g n_t)\,dt= \frac{m_{\Gamma}^{\mathrm{BR}}(f)}{m_{\Gamma}^{\mathrm{BMS}}(\GaG)}\!+\!O_{\Gamma,\Omega,\Gamma g}\left(\|f\|_{\scrS^4(\GaG)} T^{ -\eta_{\Gamma}}\right)
\end{equation*}
for all $ f\in\scrS^4(\Omega),\;T\gg_{\Omega} 1$.
\end{thm}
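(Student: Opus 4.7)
The approach is Margulis' thickening trick combined with the exponential mixing of the geodesic flow on $(\GaG,m_{\Gamma}^{\mathrm{BMS}})$ available in the convex-cocompact setting. Since $\Gamma$ is non-elementary and convex cocompact, $m_{\Gamma}^{\mathrm{BMS}}$ is finite with compact support in $\GaG$, and (as surveyed at the start of Section~\ref{deltalessthan1/2}) there exist $\eta>0$ and $k\in\NN$ such that for all smooth $\phi,\psi$ supported in a fixed bounded neighbourhood of $\supp(m_{\Gamma}^{\mathrm{BMS}})$,
\begin{equation*}
\left|\int_{\GaG}\phi(x)\psi(xa_{e^t})\,dm_{\Gamma}^{\mathrm{BMS}}(x)-\frac{m_{\Gamma}^{\mathrm{BMS}}(\phi)\,m_{\Gamma}^{\mathrm{BMS}}(\psi)}{m_{\Gamma}^{\mathrm{BMS}}(\GaG)}\right|\ll e^{-\eta t}\|\phi\|_{\scrS^k(\GaG)}\|\psi\|_{\scrS^k(\GaG)}.
\end{equation*}

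I would thicken the horocycle segment transversally using the Bruhat-type $UAN$-coordinates near the identity, where $U=\lbrace u_r=\pm\smatr{1}{0}{r}{1}:r\in\RR\rbrace$ is the opposite horocyclic subgroup. For $\ve\in(0,1)$ and a non-negative $\rho_\ve\in C_c^\infty(U_\ve\times A_\ve)$ of total integral one, set
\begin{equation*}
\Phi_\ve(x):=\int\!\!\int \rho_\ve(r,s)\,f(xu_ra_s)\,dr\,ds,\qquad x\in\GaG.
\end{equation*}
Sobolev interpolation gives $\|f-\Phi_\ve\|_\infty\ll\ve\|f\|_{\scrS^4(\GaG)}$ while $\|\Phi_\ve\|_{\scrS^k(\GaG)}\ll\ve^{-C}\|f\|_{\scrS^4(\GaG)}$ for some $C=C(k)$. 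Using the commutation $a_sn_t=n_{e^st}a_s$ and the (local) $UAN$-decomposition of $u_rn_t$, a change of variables on $\int_{-T}^T\Phi_\ve(\Gamma gn_t)\,dt$ followed by the renormalisation $n_t=a_{\log T}n_{t/T}a_{-\log T}$ and the local product structures of $m_{\Gamma}^{\mathrm{BMS}}$ (PS on $N$ times PS on $U$ times Lebesgue on $A$) and $m_{\Gamma}^{\mathrm{BR}}$ (Lebesgue on $N$ times transverse PS times Lebesgue on $UA$) recalled in Sections~\ref{PS-N}--\ref{BRmsec} should recast the thickened horocycle integral as
\begin{equation*}
\mPSGgN(B_T)\int_{\GaG}\chi_{1,\ve}(x)\chi_{2,\ve}(xa_{\log T})\,dm_{\Gamma}^{\mathrm{BMS}}(x)+O_{\Gamma,\Omega,\Gamma g}(1),
\end{equation*}
where $\chi_{1,\ve}$ is a bump near $\Gamma ga_{\log T}$ with $m_{\Gamma}^{\mathrm{BMS}}$-mass normalised to one and $\chi_{2,\ve}$ is an $\ve$-averaged avatar of $f$ satisfying $m_{\Gamma}^{\mathrm{BMS}}(\chi_{2,\ve})\approx m_{\Gamma}^{\mathrm{BR}}(f)$. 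Applying the mixing bound above at time $\log T$ then yields the main term plus an error $\ll \ve^{-2C}\|f\|_{\scrS^4(\GaG)}T^{-\eta}$; balancing this against the thickening error $\ll\ve\|f\|_{\scrS^4(\GaG)}\mPSGgN(B_T)$ by choosing $\ve$ as a small fixed power of $T$ delivers the claimed estimate for some $\eta_\Gamma\in(0,\eta)$.

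The main obstacle is the precise bookkeeping in the thickening: one must arrange the change of variables so that the Lebesgue factor $dt$ on the $N$-orbit combines with the transverse Lebesgue on $A$ and the transverse $U$-smoothing in $\Phi_\ve$ to produce a bona-fide test function integrated against $m_{\Gamma}^{\mathrm{BMS}}$ (whose $N$-component is PS, not Lebesgue). The radial hypothesis $\Gamma g\in\GaG_{\mathrm{rad}}$ enters twice: first, to ensure that the geodesic $\lbrace\Gamma ga_t\rbrace_{t\geq 0}$ meets a neighbourhood of $\supp(m_{\Gamma}^{\mathrm{BMS}})$ by time $\asymp\log T$ for $T\gg_\Omega 1$ (so that the test functions in the mixing statement can be localised appropriately); and second, via Proposition~\ref{muPSbd} and Corollary~\ref{BTCOMP}, to guarantee the lower bound on $\mPSGgN(B_T)$ that is needed to turn the additive error into the normalised error of the theorem. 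Both of these are responsible for the $\Gamma g$-dependence in the implicit constant.
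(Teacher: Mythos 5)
Your overall strategy — thicken transversally and invoke the exponential decay of correlations of the geodesic flow — is the right family of ideas and is indeed what the paper does, but the specific route you propose has a gap that is not just ``precise bookkeeping.'' You choose to run the mixing argument against the \emph{Bowen--Margulis--Sullivan} measure $m_\Gamma^{\mathrm{BMS}}$, whereas the paper (following Mohammadi--Oh) deliberately converts to exponential mixing against the \emph{Haar} measure $\mu_{\GaG}$ via Oh--Winter's Theorem 5.8, stated here as Theorem~\ref{expmixing}. This is not a cosmetic choice. When you thicken $\int_{-T}^T \Phi_\ve(\Gamma g n_t)\,dt$ by a bump on $U\times A$ and change variables, the Lebesgue factor $dt$ along $N$ combines with the transverse Lebesgue factors $dr\,ds$ to produce exactly the Haar measure $d\mu_G(n_t a_s u_r)$ on a small box — \emph{not} $m_\Gamma^{\mathrm{BMS}}$, whose $N$-component is the Patterson--Sullivan measure and hence mutually singular with Lebesgue whenever $\delGa<1$. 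There is no smooth density that converts one to the other, so the step where you ``recast the thickened horocycle integral as $\mPSGgN(B_T)\int\chi_{1,\ve}(x)\chi_{2,\ve}(xa_{\log T})\,dm_\Gamma^{\mathrm{BMS}}(x)$'' does not go through; you cannot produce a bona fide BMS-integral of two smooth bumps from a Lebesgue integral without addressing this singularity, and normalising the BMS-mass of a bump near $\Gamma g a_{\log T}$ is itself delicate since $[g]^-$ need not lie in $\Lambda(\Gamma)$ (so the geodesic need not meet the BMS support). The Haar-mixing asymptotic, by contrast, has main term $\frac{m_\Gamma^{\mathrm{BR}}(f_1)\,m_\Gamma^{\mathrm{BR_*}}(f_2)}{m_\Gamma^{\mathrm{BMS}}(\GaG)}\,y^{1-\delGa}$, and the $\mathrm{BR_*}$-measure \emph{does} have a Lebesgue factor along $N$ — this is what makes the bookkeeping in Lemma~\ref{measdecomp} and Theorem~\ref{translates} close up, producing the PS-mass $\mPSGgN(\phi)$ of the horocycle bump and $m_\Gamma^{\mathrm{BR}}(f)$ in the main term.

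Two further points are missing from your sketch. First, the paper's sandwich argument in Theorem~\ref{translates} and in equations~\eqref{ineq1}--\eqref{ineq2} requires the test functions to be non-negative, so the proof of Theorem~\ref{mainthm2} is first established for $f\geq 0$ (giving~\eqref{poseffeq}) and then extended to general $f\in\scrS^4(\Omega)$ by a separate argument using smoothed indicator functions $\varphi^{\pm}_{f,\epsilon}$; your proposal does not address this, and your convolution $\Phi_\ve$ alone does not give the required one-sided bounds. Second, your estimate ``thickening error $\ll\ve\|f\|_{\scrS^4(\GaG)}\mPSGgN(B_T)$'' quietly uses an upper bound on $\int_{-T}^T\mathbbm{1}_{\supp f}(\Gamma g n_t)\,dt$ of the same order as $\mPSGgN(B_T)$; this is essentially an instance of the result you are trying to prove and must be obtained by a prior, non-circular appeal (the paper handles the analogous term at the end of Section~\ref{deltalessthan1/2} by invoking~\eqref{poseffeq} for the already-established non-negative case). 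Repairing your proposal would amount to replacing the BMS-mixing input by Theorem~\ref{expmixing} and reorganising the argument along the lines of Lemma~\ref{measdecomp}, Theorem~\ref{translates}, and the positivity reduction.
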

\begin{remark}
As in Theorem \ref{mainthm}, the behaviour of $\Gamma g$ under the $A$-action affects the error term in the equidistribution statement. Here, it is the dependency of the implied constant on the starting point $\Gamma g$ that is determined by properties of the $A$-orbit of $\Gamma g$. Since $\Gamma$ is convex cocompact, for every $\Gamma g \in \GaG_{\mathrm{rad}}$, the set $\lbrace \Gamma g a_y\,:\, y\geq 1\rbrace$ is contained in a compact subset of $\GaG$. It is the maximal distance of this set to some fixed basepoint that determines the implied constant's dependency on the starting point, i.e. given $r>0$, the implied constant can be made uniform over all $\Gamma g\in \Omega\cap \GaG_{\mathrm{rad}}$ such that $\sup_{y\geq 1}\disGa{\Gamma g a_y}{\Gamma e}\leq r$. In particular, the implied constant can be made uniform over the set $\lbrace \Gamma g\,:\, [g]^{\pm}\in\Lambda(\Gamma)\rbrace$.
\end{remark}
\subsection{Overview of article}
The majority of the article (Sections \ref{Ysec}-\ref{thm1proof}) is devoted to the proof of Theorem \ref{mainthm}. As mentioned above, to do this, we combine Str\"ombergsson's  effective equidistribution result \cite[Theorem 1]{Andjmd} with an effective equidistribution statement for the base eigenfunctions, Theorem \ref{phiequi}. It is Theorem \ref{phiequi} that is the main technical result of the paper. 

In Section \ref{Ysec}, we define the invariant height function $\sYGa$ and state a collection of its properties that will be used throughout the rest of the article. Section \ref{harmonic} consists of a recollection of a series of facts regarding harmonic analysis on $\GaG$, in particular, the decomposition of $L^2(\GaG)$ into irreducible unitary representations, as well as a couple of Sobolev inequalities. 

The proof of Theorem \ref{phiequi} consists of a series of calculations using the \emph{Patterson-Sullivan density}. In Section \ref{PatSulSec} we recall the definition of \emph{conformal densities} on $\partial_{\infty}\HH$ and their properties. A key result here is \emph{Sullivan's shadow lemma}, which we use to bound the Patterson-Sullivan measures of certain sets in $\partial_{\infty}\HH$.

Having set up the necessary prerequisites, in Section \ref{phieqsec} we state and prove Theorem \ref{phiequi}. Str\"ombergsson's effective equidistribution result is stated in Section \ref{orthsec}, and combined with Theorem \ref{phiequi} in Section \ref{thm1proof} to prove Theorem \ref{mainthm}.

Section \ref{deltalessthan1/2} is devoted to the proof of Theorem \ref{mainthm2}. We start by recalling results of Stoyanov \cite{Stoyanov} and Oh and Winter \cite{OhWinter} on exponential mixing of the $A$-action on $\GaG$. This is used to show effective equidistribution of \emph{expanding translates} of pieces of horocycle orbits; the result we need is due to Mohammadi and Oh \cite{MohammadiOh}. Theorem \ref{mainthm2} is then proved by combining this result with Sullivan's shadow lemma. 
\subsection*{Acknowledgements} This research was funded by a scholarship from the Knut and Alice Wallenberg foundation. I would like to thank Hee Oh for asking me about this as well as for interesting and enlightening discussions, and  Andreas Str\"ombergsson for some useful comments. 
\section{The Invariant Height Function}\label{Ysec}
\subsection{The invariant height function}\label{Ydefsec}
Here we will define the \emph{invariant height function}. Much of this section is similar to \cite[Section 2]{Ed4}, however since we deal only with the case $G=\PSL(2,\RR)$, and \cite{Ed4} studies the general case $G=\SO_0(n,1)$, there are a number of simplifications. The primary reason for this is due to the fact that all cusps of $\GaH$ have \emph{full rank}, which is not necessarily the case in higher dimensions.

We start by recalling some properties regarding the action of $G$ on $\HH$.
For $\eta\in\partial_{\infty} \HH\setminus\lbrace \infty\rbrace$, define the \emph{horoball of diameter $\sigma$ based at $\eta$}, $\scrH(\eta,\sigma)\subset\HH$, by
\begin{equation*}
\scrH(\eta,\sigma):= \lbrace z\in\HH\,:\, |z-(\eta+i\sfrac{\sigma
}{2})|<\sfrac{\sigma}{2}\rbrace.
\end{equation*}
We also define horoballs at infinity $\scrH(\infty,\sigma)$ by
\begin{equation*}
\scrH(\infty,\sigma):= \lbrace z\in\HH\,:\, \Im(z)>\sigma\rbrace.
\end{equation*}
Observe that if $g\in G$ and $\eta\in\partial_{\infty} \HH$, then for any $\sigma>0$, there exists $\sigma_g>0$ such that $g\cdot \scrH(\eta,\sigma)=\scrH(g\cdot \eta, \sigma_{g})$.

Horoballs are important for studying the behaviour of functions in the cusps of $\GaG$. We will now define a function that captures the growth properties of functions in cusps in a succinct way. We follow \cite[Section 2]{Ed4} and \cite[Section 2]{Andjmd}.
Given a \emph{parabolic fixed point} (henceforth abbreviated pfp) $\eta\in\partial_{\infty}\HH$ of $\Gamma$, we define a subset $\scrN_{\eta}^{(\Gamma)}\subset G$ by
\begin{equation*}
\scrN_{\eta}^{(\Gamma)}:=\lbrace h\in G\,:\, h\cdot \eta=\infty\;\mathrm{and}\; h\, \Stab_{\Gamma}(\eta)\, h^{-1}=\pm\smatr{1}{\ZZ}{0}{1}\rbrace.
\end{equation*} 
Note that given a pfp $\eta$ of $\Gamma$, we have $\Im(h_1\cdot z)=\Im(h_2\cdot z)$ for all $z\in\HH$ and $h_1,$ $h_2\in\scrN_{\eta}^{(\Gamma)}$ (cf.\ \cite[Lemma 2]{Ed4}). Another important property is that $\scrN_{\eta}^{(\Gamma)} g=\scrN_{g^{-1}\cdot\eta}^{(g^{-1}\Gamma g)}$ (for all pfps $\eta$ of $\Gamma$ and $g\in G$). In particular, if $\eta$ is a pfp for $\Gamma$, then for all $\gamma\in\Gamma$,  $\gamma\cdot\eta$ is also a pfp for $\Gamma$, and $\scrN_{\gamma\cdot\eta}^{(\Gamma)} =\scrN_{\eta}^{(\Gamma )}\gamma^{-1}$.
We now define the \emph{invariant height function}: let $\widetilde{\sYGa}:\HH\rightarrow\RR_{>0}$ be defined by
\begin{equation}\label{Ydef}
\widetilde{\sYGa(z)}:= \sup_{\overset{\eta\in\partial_{\infty} \HH}{\eta \,\mathrm{is\,a\,pfp\,of\,\Gamma}}}\, \Im( h_{\eta}\cdot z)\qquad (h_{\eta}\in \scrN_{\eta}^{(\Gamma)}),
\end{equation}
and 
\begin{equation*}
\sYGa(z):=\max\lbrace 1, \widetilde{\sYGa(z)}\rbrace.
\end{equation*}
We will see shortly that $\sYGa$ is well-defined, i.e.\ the supremum in the definition is finite for every $z\in\HH$. Since $\Gamma$ is geometrically finite, the set of pfps for $\Gamma$ decomposes into a finite number $\kappa<\infty$ of $\Gamma$-orbits, cf.\ \cite[Lemma 3.1.4]{Bow1}, \cite[Corollary 6.5]{Bow2}. Choosing a set of representatives $\eta_1,\ldots,\eta_{\kappa}$ for the $\Gamma$-orbits, we may use the equality $\scrN_{\gamma\cdot\eta}^{(\Gamma)} =\scrN_{\eta}^{(\Gamma )}\gamma^{-1}$ to express $\sYGa$ as
\begin{equation*}
\sYGa(z)=\max\left\lbrace 1, \max_{1\leq i \leq \kappa} \sup_{\gamma\in\Gamma}\; \Im( h_{\eta_i}\gamma\cdot z)\right\rbrace \qquad (h_{\eta_i}\in \scrN_{\eta_i}^{(\Gamma)}).
\end{equation*}

Observe that $\sYGa$ is left $\Gamma$-invariant; we may thus also view it as a function on $\GaH$. Furthermore, we may view it as a left $\Gamma$-invariant and right $K$-invariant function on $G$ by the formula
\begin{equation*}
\sYGa(g):=\sYGa(g\cdot i)\qquad \forall g\in G.
\end{equation*}
The $\Gamma$-invariance allows us to also view $\sYGa$ as a function on $\GaG$. Note that $\sYGa(n_x a_y k)=\sYGa(x+i y)$ for all $x\in\RR$, $y>0$, $k\in K$. We will abuse notation slightly and use $\sYGa$ to denote the function on any of $\HH$, $\GaH$, $G$, and $\GaG$.

Several important properties of $\sYGa$ are captured in the following proposition:

\begin{prop}\label{Yprops}
$ $ 
\begin{enumerate}
\item $\sYGa(\Gamma g n_x)\leq \sYGa(\Gamma g)(1+|x|)^2$ for all $g\in G$, $x\in \RR$.
\item $\sYGa(\Gamma g a_y)\leq \sYGa(\Gamma  g)\max\lbrace y,y^{-1}\rbrace$ for all $g\in G$, $y>0$.
\item $\sYGa( g \cdot z)= \scrY_{g^{-1}\Gamma g}(z)$ for all $g\in G$, $z\in\HH$.
\item The set $\lbrace z\in\HH\,:\, \sYGa(z)>1\rbrace$ is a $\Gamma$-invariant disjoint union of horoballs based at the pfps of $\Gamma$.
\item There exist constants $0<c_0<c_1$ such that 
\begin{equation*}
c_0  e^{\mathsf{dist}_{\GaG}(\Gamma g, \Gamma e)}\leq\sYGa(\Gamma g)  \leq c_1 e^{\mathsf{dist}_{\GaG}(\Gamma g, \Gamma e)}
\end{equation*}
for all $g\in\lbrace h\in G\,:\, \sYGa(\Gamma h)>1\rbrace$.
\end{enumerate}
\end{prop}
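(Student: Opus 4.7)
The plan is to verify the five properties in sequence: (1) and (2) are direct $\PSL(2,\RR)$ computations at each parabolic fixed point, (3) is a formal manipulation, (4) is the geometric heart of the proposition and relies on Shimizu's lemma, and (5) is a hyperbolic distance estimate in the cusps.

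For (1) and (2), fix a pfp $\eta$ and $h_\eta \in \scrN_\eta^{(\Gamma)}$, write $h_\eta g = \smatr{a}{b}{c}{d}$, and apply $\Im(\gamma\cdot z) = \Im(z)/|cz+d|^2$ to get $\Im(h_\eta g \cdot i) = 1/(c^2+d^2)$, $\Im(h_\eta g n_x\cdot i) = 1/(c^2 + (cx+d)^2)$, and $\Im(h_\eta g a_y\cdot i) = 1/(c^2 y + d^2/y)$. Property (2) is immediate from $c^2 y + d^2/y \geq \min\{y, y^{-1}\}(c^2+d^2)$. Property (1) reduces to the elementary inequality $c^2 + d^2 \leq (c^2 + (cx+d)^2)(1+|x|)^2$; this is cleanest in the Iwasawa form $h_\eta g = n_u a_v k_\theta$, where it becomes $((x\sin\theta+\cos\theta)^2+\sin^2\theta)(1+|x|)^2 \geq 1$, which after squaring and simplification at the critical angle reduces to $4x \geq 0$. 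Taking the supremum over pfps and applying $\max\{1,\cdot\}$ concludes both. Property (3) is formal: the identity $\scrN_\eta^{(\Gamma)} g = \scrN_{g^{-1}\eta}^{(g^{-1}\Gamma g)}$ noted just before the definition of $\sYGa$ implies that the products $h_\eta g$ range precisely over $\bigcup_{\eta'}\scrN_{\eta'}^{(g^{-1}\Gamma g)}$ as $\eta$ varies, so substituting into \eqref{Ydef} yields the claim.

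For (4), unwind the definition: $\widetilde{\sYGa}(z)>1$ iff $\Im(h_\eta\cdot z)>1$ for some pfp $\eta$, iff $z\in h_\eta^{-1}\scrH(\infty,1)$. This image is a horoball $\scrH(\eta,\sigma_\eta)$ whose diameter $\sigma_\eta$ depends only on $\eta$ (since $\Im(h\cdot z)$ is the same for every $h\in\scrN_\eta^{(\Gamma)}$), and $\Gamma$-invariance of the union is immediate from $\scrN_{\gamma\eta}^{(\Gamma)}=\scrN_\eta^{(\Gamma)}\gamma^{-1}$. The essential content is pairwise disjointness of these horoballs across all pfps of $\Gamma$: the normalization $h_\eta\Stab_\Gamma(\eta)h_\eta^{-1} = \pm\smatr{1}{\ZZ}{0}{1}$, combined with Shimizu's lemma, forces the $\sigma_\eta$ to be small enough that the horoballs at inequivalent pfps (and at all $\Gamma$-translates within a single orbit) do not overlap. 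This is where the discreteness and geometric finiteness of $\Gamma$ are really used, and I expect it to be the main technical step.

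For (5), by (4) there is a unique pfp $\eta$ with $g\cdot i\in\scrH(\eta,\sigma_\eta)$; set $Y=\sYGa(\Gamma g)=\Im(h_\eta g\cdot i)>1$. Standard hyperbolic geometry in the upper half-plane gives $\dist(h_\eta g\cdot i,\{\Im=1\})=\log Y$, so by isometry of $h_\eta$ we have $\dist(g\cdot i,\partial\scrH(\eta,\sigma_\eta))=\log Y$. For the upper bound $\sYGa(\Gamma g)\leq c_1 e^{\disGa{\Gamma g}{\Gamma e}}$, observe that $\sYGa(\gamma\cdot i)=\sYGa(i)\leq 1$ for every $\gamma\in\Gamma$, so no $\Gamma$-translate of $i$ lies in any horoball from (4); hence $\dist(g\cdot i,\gamma\cdot i)\geq \dist(g\cdot i,\partial\scrH(\eta,\sigma_\eta))=\log Y$ for all $\gamma$, giving $\disGa{\Gamma g}{\Gamma e}\geq \log Y$. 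For the lower bound $c_0 e^{\disGa{\Gamma g}{\Gamma e}}\leq\sYGa(\Gamma g)$, concatenate the vertical geodesic of length $\log Y$ inside $\scrH(\eta,\sigma_\eta)$ with a segment of bounded length from $\partial\scrH(\eta,\sigma_\eta)$ to a $\Gamma$-translate of $i$; the short segment exists because $\Stab_\Gamma(\eta)$ acts cocompactly on $\partial\scrH(\eta,\sigma_\eta)$ and there are only finitely many $\Gamma$-orbits of pfps, yielding $\disGa{\Gamma g}{\Gamma e}\leq\log Y+O_\Gamma(1)$ with a $\Gamma$-dependent constant.
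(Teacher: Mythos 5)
Your overall structure matches the paper's proof closely (direct computation for (1)--(2) where the paper cites \cite[p.~298]{Andjmd}, the same formal identity for (3), Shimizu's lemma for (4), and cusp-distance estimates for (5)), and (1)--(3) are sound modulo a small arithmetic slip in (1) (the critical-angle reduction gives $3+2|x|\geq 0$, not ``$4x\geq 0$'', which would fail for $x<0$). The main issue is in your upper bound for (5). You assert that $\sYGa(\gamma\cdot i)=\sYGa(i)\leq 1$, i.e.\ that $i$ lies outside every horoball from (4), and deduce $\disGa{\Gamma g}{\Gamma e}\geq\log Y$ with no error term. This is false in general: nothing in the definition of $\sYGa$ ties the basepoint $i$ to the cusps of $\Gamma$. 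Concretely, if $\Stab_\Gamma(\infty)=\pm\smatr{1}{\ZZ/100}{0}{1}$, the normalizing element $h_\infty\in\scrN_\infty^{(\Gamma)}$ must be $\pm\smatr{10}{b}{0}{1/10}$, giving $\Im(h_\infty\cdot i)=100$; the horoball at $\infty$ is then $\{\Im(z)>1/100\}$, which contains $i$, so $\sYGa(i)=100$. Your bound must therefore carry a subtracted constant $\log\sYGa(\Gamma e)$ (or similar, handling the case $\gamma\cdot i\in\scrH(\eta,\sigma_\eta)$ separately), which is exactly what the paper does by explicitly subtracting $\max_{1\leq l\leq\kappa}\dist(h_l\cdot i,i)$ and splitting on whether $\gamma\in\Stab_\Gamma(\eta_j)$. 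The fix is routine and only worsens $c_1$, but as written your argument is incorrect.

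Secondarily, for (4) you identify Shimizu's lemma as the right tool but stop at ``I expect it to be the main technical step'' without giving the argument; the paper's proof is short (conjugate so $h_{\eta_1}=\mathrm{id}$, write $h_{\eta_2}=\pm\smatr{a}{b}{c}{d}$, show $\scrH(\infty,1)\cap\scrH(\eta_2,\sigma_2)\subset\{1<\Im z<1/c^2\}$, then apply Shimizu to $\langle\Stab_\Gamma(\eta_1),\Stab_\Gamma(\eta_2)\rangle$ to force $c^2\geq 1$), and you should carry it out since, as you note yourself, this is where discreteness actually enters. Your phrasing that Shimizu ``forces the $\sigma_\eta$ to be small enough'' also inverts the logic slightly --- the $\sigma_\eta$ are fixed by the normalization, and Shimizu shows the resulting horoballs happen to be disjoint.
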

\begin{proof}These statements are all contained (either explicitly or implicitly) in \cite[Section 2]{Ed4} and \cite[Section 2]{Andjmd} (cf.\ also \cite[Lemma 5]{Ed1}). For completeness, we give exact references and supplementary arguments. For \textit{(1)} and \textit{(2)}, see \cite[(12), (13), and the subsequent paragraph, p.\ 298]{Andjmd}. Item \textit{(3)} follows from the fact that $\scrN_{\eta}^{(\Gamma)} g=\scrN_{g^{-1}\cdot\eta}^{(g^{-1}\Gamma g)}$.

To prove \textit{(4)}, we choose two pfps $\eta_1\neq\eta_2$ of $\Gamma$ and let $\scrH(\eta_i,\sigma_i)$ be defined by $\scrH(\eta_i,\sigma_i)=h_i^{-1}\scrH(\infty,1)$, where $h_i\in \scrN_{\eta_i}^{(\Gamma_i)}$, $i=1,2$. After possibly conjugating $\Gamma$, we may assume that $\eta_1=\infty$, $h_1=\pm\smatr 1 0 0 1$, and $\Gamma_{\infty}=\pm\smatr{1}{\ZZ}{0}{1}$. Writing $h_2=\pm\smatr{a}{b}{c}{d}$, if $z=x+iy\in \scrH(\infty,1)\cap\scrH(\eta_2,\sigma_2)$, then
\begin{equation*}
\Im(h_2\cdot z)=\frac{y}{(cx+d)^2 + (cy)^2}>1.
\end{equation*}
Since $z\in\scrH(\infty,1)$, $y>1$. Observe also that since $\eta_2\neq \infty$, $h_2\not\in\Stab_{\Gamma}(\infty)$, hence $c\neq 0$, and thus
\begin{equation*}
1<\Im(h_2 \cdot z)\leq \sfrac{y}{(cy)^2}\leq\sfrac{ 1}{c^2 y}.
\end{equation*}
We then have
\begin{equation*}
\scrH(\infty,1)\cap\scrH(\eta_2,\sigma_2)\subset \lbrace z:\, 1<\Im(z)<\sfrac{1}{c^2}\rbrace.
\end{equation*}
Consider now the subgroup $\Gamma'<\Gamma$ defined by
\begin{equation*}
\Gamma'=\langle \Stab_{\Gamma}(\eta_1), \Stab_{\Gamma}(\eta_2)\rangle=\langle \pm\smatr{1}{1}{0}{1}, \pm h_2^{-1}\smatr{1}{1}{0}{1} h_2\rangle.
\end{equation*}
Now, since $h_2=\smatr a b c d$, $h_2^{-1} \smatr 1 1 0 1 h_2=\pm\smatr{\ast}{\ast}{-c^2}{\ast}$.
We now apply Shimizu's lemma (cf.\  \cite[Lemma 4]{Shim}, \cite[Lemma 1.7.3]{Miy}) to the discrete group $\Gamma'$: if $c^2<1$, then $h_2^{-1} \smatr 1 1 0 1 h_2\in\Gamma_{\infty}$. Since $\eta_2\neq \infty$, $h_2^{-1} \smatr 1 1 0 1 h_2\not\in\Gamma_{\infty}$, and hence $c^2\geq 1$, giving
\begin{equation*}
\scrH(\infty,1)\cap\scrH(\eta_2,\sigma_2)\subset \lbrace z:\, 1<\Im(z)<\sfrac{1}{c^2}\rbrace=\emptyset.
\end{equation*}
This shows that $\bigcup_{\mathrm{pfps\,}\eta} h_{\eta}^{-1}\scrH(\infty,1)$ is in fact a \emph{disjoint} union of horoballs. By \textit{(3)}, this is a $\Gamma$-invariant set. Consequently, $\sYGa$ is well-defined: if $z\in\HH\setminus \bigcup_{\mathrm{pfps\,}\eta} h_{\eta}^{-1}\scrH(\infty,1)$, then from \eqref{Ydef}, $\widetilde{\sYGa}(z)\leq 1$, and if $z\in h_{\eta}^{-1}\scrH(\infty,1)$, then $\widetilde{\sYGa}(z)=\Im(h_{\eta}\cdot z)>1$. Thus: $\lbrace z\in\HH\,:\, \sYGa(z)>1\rbrace=\bigcup_{\mathrm{pfps\,}\eta} h_{\eta}^{-1}\scrH(\infty,1)$. 

To prove \textit{(5)}, we make use of the set $\eta_1,\ldots,\eta_{\kappa}$ of ($\Gamma$-inequivalent) representatives for the set of all pfps. We assume that $z=g\cdot i$ and $\sYGa(z)>1$. By \textit{(4)}, $z\in\scrH(\eta,\sigma)$ for some pfp $\eta$ and $\sYGa(z)=\Im(h_{\eta}\cdot z)$. Using the $\Gamma$-invariance of $\sYGa$ and $\mathsf{dist}_{\GaG}$, we may assume that $z\in\scrH(\eta_j,\sigma_j)$, $1\leq j \leq \kappa$. We then have
\begin{align*}
e^{\mathsf{dist}(\Gamma g , \Gamma e)}=&e^{\inf_{\gamma\in \Gamma}\dist(\gamma\cdot z, i)}=e^{\inf_{\gamma\in \Gamma}\big(\dist(h_{\eta_j}\gamma\cdot z, i)+\dist(h_{\eta_j}\cdot i, i)\big)}\\& \leq \left(e^{\inf_{\gamma\in \Gamma}\dist(h_{\eta_j}\gamma\cdot z, i) }\right) \left(\max_{1\leq l\leq \kappa} e^{\dist(h_{\eta_l}\cdot i, i)} \right)\ll \left(e^{\inf_{\gamma\in \Stab_{\Gamma}(\eta_j)}\dist(h_{\eta_j}\gamma\cdot z, i) }\right).
\end{align*}
Now, since $h_{\eta_j}\Stab_{\Gamma}(\eta_j)h_j^{-1}=\pm\smatr{1}{\ZZ}{0}{1}$, we can find $\gamma \in \Stab_{\Gamma}(\eta_j)$ such that
\begin{equation*}
h_{\eta_j}\gamma  z=h_{\eta_j}\gamma h_{\eta_j}^{-1}\cdot (h_{\eta_j}\cdot z)=x_j+i\Im(h_{\eta_j}\cdot z),
\end{equation*}
with $|x_j|\leq \frac{1}{2}$. This gives $e^{\inf_{\gamma\in \Stab_{\Gamma}(\eta_j)}\dist(h_{\eta_j}\gamma\cdot z, i) }\leq e^{\dist(x_j+i\Im(h_{\eta_j}\cdot z),i)}\ll\Im(h_{\eta_j}\cdot z),$ and so
\begin{equation*}
e^{\mathsf{dist}(\Gamma g , \Gamma e)}\ll \Im(h_{\eta_j}\cdot z)=\sYGa(z)=\sYGa(\Gamma g). 
\end{equation*}
In the opposite direction, note that if $\gamma \not \in \Stab_{\Gamma}(\eta_j)$, then $\Im(h_{\eta_j}\gamma h_{\eta_j}^{-1}\cdot i)\leq 1$ (see the proof of \textit{(4)}). This gives
\begin{align*}
\dist(\gamma\cdot z, i)=&\dist(h_{\eta_j}z,h_{\eta_j}\cdot i)\geq \left(\dist(h_{\eta_j}\gamma\cdot z, i)-\dist(h_j\cdot i, i)\right)\\&\geq \dist(h_{\eta_j}\gamma\cdot z, i)-\left(\max_{1\leq l \leq \kappa}\dist(h_l\cdot i, i) \right)\\&=\dist(h_{\eta_j}\cdot z, h_{\eta_j}\gamma h_{\eta_j}^{-1}\cdot i)-\left(\max_{1\leq l \leq \kappa}\dist(h_l\cdot i, i) \right).
\end{align*}
Since $\Im(h_{\eta_j}\cdot z)>1$ and $\Im(h_{\eta_j}\gamma h_{\eta_j}^{-1}\cdot i)\leq 1$, 
\begin{equation*}
\dist(h_{\eta_j}\cdot z, h_{\eta_j}\gamma h_{\eta_j}^{-1}\cdot i)\geq \log\left( \sfrac{\Im(h_{\eta_j}\cdot z)}{\Im(h_{\eta_j}\gamma h_{\eta_j}^{-1}\cdot i)}\right)\geq \log(\Im(h_{\eta_j}\cdot z)).
\end{equation*}
This gives
\begin{equation*}
\dist(\gamma\cdot z, i) \geq \log(\Im(h_{\eta_j}\cdot z))-\left(\max_{1\leq l \leq \kappa}\dist(h_l\cdot i, i) \right).
\end{equation*}
For $\gamma\in \Stab_{\Gamma}(\eta_j)$, $h_{\eta_j} \gamma h_{\eta_j}^{-1}\in \ZZ$, hence 
\begin{align*}
\dist(\gamma\cdot z, i)&=\dist(h_{\eta_j}\gamma\cdot z, i)-\dist(h_j\cdot i, i)\\\geq&\left(\inf_{n\in \ZZ }\dist(h_{\eta_j}\cdot z+n, i)\right)-\left(\max_{1\leq l \leq \kappa}\dist(h_l\cdot i, i) \right)\\&\geq \log\big( \Im(h_{\eta_j}\cdot z)\big)-\left(\max_{1\leq l \leq \kappa}\dist(h_l\cdot i, i) \right).
\end{align*}
In conclusion,
\begin{equation*}
e^{\mathsf{dist}(\Gamma g , \Gamma e)}=e^{\inf_{\gamma\in \Gamma}\dist(\gamma\cdot z, i)}\geq e^{\log\big( \Im(h_{\eta_j}\cdot z)\big)-\left(\max_{1\leq l \leq \kappa}\dist(h_l\cdot i, i) \right)}\gg \Im(h_{\eta_j}\cdot z)=\sYGa(z).
\end{equation*}
\end{proof}
\section{Decomposition of $L^2(\GaG)$ and Sobolev Inequalities}\label{harmonic}
\subsection{Unitary representations}
Recall the notation from Section \ref{intro}: $-\Delta$ ($\Delta$ is the Laplace-Beltrami operator acting on $L^2(\scrM)$) has finitely many eigenvalues $\lambda_0,\ldots,\lambda_I$ in $[0,\frac{1}{4})$: $0<\delGa(1-\delGa)=\lambda_0<\lambda_1\leq\ldots\leq \lambda_I<\frac{1}{4}$, and we write $\lambda_i=s_i(1-s_i)$ with $s_i\in (\frac{1}{2},1)$, $i=0,\ldots,I$ (note thus that $s_0=\delGa$). 

We now recall the decomposition of the unitary representation $(\rho,L^2(\GaG))$ into tempered and non-tempered parts; here $\rho$ denotes right translation, i.e.\ $\big(\rho(g)f\big)(\Gamma h)=f(\Gamma hg)$ for all $g\in G$, $f\in L^2(\GaG)$, and $\Gamma h\in \GaG$. Letting $H$, $X_+$, and $X_-$ denote the following elements of the Lie algebra $\fg=\mathfrak{sl}(2,\RR)$ of $G$:
\begin{equation*}
H=\matr{1/2}{0}{0}{-1/2},\quad X_+=\matr{0}{1}{0}{0},\quad X_-=\matr{0}{0}{1}{0},
\end{equation*}
the Casimir element $\scrC$ of $\fg$ may be expressed as $\scrC=H^2-H+X_+X_-$. Identifying $L^2(\scrM)$ with the subspace $L^2(\GaG)_K\subset L^2(\GaG)$ of $\rho(K)$-invariant vectors, one observes that $\scrC$ acts on $L^2(\GaG)_K$ as $\Delta$; this allows one to combine the spectral theory of $\Delta$ on $L^2(\scrM)$ with the classification of the unitary dual of $G$ to obtain the following:
\begin{prop}\label{decomp}(cf.\ \cite[Theorem 3.1]{LeeOh})
\begin{equation*}
\big(\rho,L^2(\GaG)\big)= \bigoplus_{i=0}^{I} (\rho,\scrC_i)\oplus \big(\rho,L^2(\GaG)_{temp}\big),
\end{equation*}
where each $(\rho,\scrC_i)$ is a complementary series representation which $\scrC$ acts on the smooth vectors of by $s_i(s_i-1)$, and $\big(\rho,L^2(\GaG)_{temp}\big)$ is tempered.
\end{prop}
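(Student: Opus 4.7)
The plan is to combine Patterson's spectral theorem for $\Delta$ on $L^2(\scrM)$ with the classification of the unitary dual of $G = \PSL(2,\RR)$. The input I would draw from the classification is that the irreducible unitary representations of $G$ admitting a nonzero $K$-fixed vector are exactly: the trivial representation; the complementary series $\scrC_s$ for $s \in (1/2, 1)$, on which $\scrC$ acts by $s(s-1) \in (-1/4, 0)$; and the spherical principal series, which are tempered and on which $\scrC$ acts by a scalar in $(-\infty, -1/4]$. All remaining irreducible unitaries of $G$ (non-spherical principal series and the holomorphic/antiholomorphic discrete series) are tempered. On the analytic side, the key observation already highlighted in the paper is that $\scrC$ acts on $L^2(\GaG)_K \cong L^2(\scrM)$ as $\Delta$, so Patterson's theorem tells us $\scrC|_{L^2(\GaG)_K}$ has finite-dimensional discrete spectrum in $(-1/4, 0)$ consisting of the eigenvalues $s_i(s_i - 1)$, $i = 0, \ldots, I$.

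First, for each $i$ I would let $E_i \subset L^2(\scrM)$ denote the (finite-dimensional) $\lambda_i$-eigenspace of $-\Delta$ and set $\scrC_i := \overline{\rho(G) E_i} \subset L^2(\GaG)$. Any nonzero $\phi \in E_i$ is $K$-fixed and a Casimir eigenvector with eigenvalue $s_i(s_i-1) \in (-1/4, 0)$, so the cyclic $(\fg, K)$-module it generates is irreducible and, by the classification, isomorphic to the Harish-Chandra module of the complementary series $\scrC_{s_i}$. Consequently $\scrC_i$ decomposes as a finite direct sum of copies of $\scrC_{s_i}$, one for each vector in an orthonormal basis of $E_i$. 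For $i \neq j$ the eigenspaces $E_i$ and $E_j$ are orthogonal in $L^2(\scrM)$, and this orthogonality of the $K$-fixed cyclic generators propagates to orthogonality of $\scrC_i$ and $\scrC_j$ in $L^2(\GaG)$ via Schur orthogonality for matrix coefficients of inequivalent irreducibles together with uniqueness of the spherical vector in the unitary model.

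The hard part will be showing that $L^2(\GaG)_{temp} := \bigl(\bigoplus_{i=0}^I \scrC_i\bigr)^{\perp}$ is tempered. I would decompose this closed $G$-invariant subspace into irreducible summands using the direct integral decomposition of unitary representations of the type~I group $G$. By construction, $L^2(\GaG)_{temp} \cap L^2(\GaG)_K$ equals the orthogonal complement of $\bigoplus_i E_i$ inside $L^2(\scrM)$, so $-\Delta$ restricted there has spectrum in $[1/4, \infty)$. Hence every spherical irreducible component of the disintegration of $L^2(\GaG)_{temp}$ has Casimir eigenvalue $\leq -1/4$, which by the classification forces it to be either trivial or a tempered spherical principal series; the trivial representation is excluded because $\mu_{\GaG}(\GaG) = \infty$. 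Non-spherical irreducible components are automatically tempered, and since temperedness is preserved under direct integrals, $(\rho, L^2(\GaG)_{temp})$ is tempered.

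The main technical subtlety in that last step is cleanly invoking the direct integral decomposition and handling any continuous Plancherel spectrum on an infinite-volume quotient. I would bypass fine commutant-theoretic issues by working concretely on the $K$-finite subspace, which by Harish-Chandra's admissibility theorem decomposes into irreducible $(\fg, K)$-modules that can be analyzed directly using the above classification; temperedness of the full Hilbert representation then follows from temperedness of the $K$-finite vectors by density and continuity of matrix coefficients. This is essentially the strategy carried out in \cite[Theorem 3.1]{LeeOh}.
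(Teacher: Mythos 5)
Your proposal is correct and follows essentially the same route the paper takes: the paper gives no detailed proof of Proposition~\ref{decomp} but cites Lee and Oh's Theorem~3.1 and indicates that the argument combines Patterson's spectral theory of $\Delta$ on $L^2(\scrM)$ with the classification of the unitary dual of $G$, using that the Casimir $\scrC$ acts as $\Delta$ on $L^2(\GaG)_K$. Your worked-out version (generate each $\scrC_i$ from a small eigenfunction, identify its cyclic $(\fg,K)$-module with the complementary series $\scrC_{s_i}$, then observe that spherical components of the orthogonal complement have Casimir eigenvalue $\leq -1/4$ while non-spherical irreducibles of $\PSL(2,\RR)$ are automatically tempered) is exactly what that reference carries out.
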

\subsection{Sobolev inequalities}\label{sobsec}
We start by recalling the definition of the Sobolev norms that we need. Fix a basis $X_1,X_2,X_3$ of $\fg$, and for $m\in\NN$, define
\begin{equation*}
\|f\|_{\scrS^m(\GaG)}:=\sqrt{\sum_{U} \|Uf\|_{L^2(\GaG)}^2},\qquad \forall f\in C^{\infty}(\GaG)\cap L^2(\GaG),
\end{equation*} 
where the sum runs over all monomials $U$ in the $X_i$ of order not greater than $m$ (this includes the element $``1"$ of order zero). We let $\scrS^m(\GaG)\subset L^2(\GaG)$ denote the closure (with respect to $\|\cdot\|_{\scrS^m(\GaG)}$) of the elements $f$ of $L^2(\GaG)\cap C^{\infty}(\GaG)$ with $\|f\|_{\scrS^m(\GaG)}<\infty$. Also, define $\scrS^{\infty}(\GaG):=\bigcap_{m\in\NN} \scrS^m(\GaG)$. 

Using an automorphic Sobolev inequality of Bernstein and Reznikov \cite[Proposition B.2]{Bern}, we may use $\sYGa$ and Sobolev norms to express the following pointwise bound on functions in $\scrS^2(\GaG)$:
\begin{lem}\label{Sobbdd}
\begin{equation*}
|f(\Gamma g)|\ll_{\Gamma} \|f\|_{\scrS^2(\GaG)}\sYGa(\Gamma g)^{\frac{1}{2}}\qquad \forall f\in\scrS^2(\GaG),\,g\in G.
\end{equation*}
\end{lem}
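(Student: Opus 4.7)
The plan is to split according to whether $\Gamma g$ lies in the thick part $\{\sYGa=1\}$ or in the thin cuspidal part $\{\sYGa>1\}$. In the thick case, Proposition \ref{Yprops}\textit{(4)} places $g\cdot i$ outside every cuspidal horoball $h_\eta^{-1}\scrH(\infty,1)$, which gives a $\Gamma$-uniform lower bound on the injectivity radius of $\GaG$ at $\Gamma g$. Consequently a fixed-size neighbourhood $U$ of $e$ in $G$ embeds injectively via $u\mapsto \Gamma g u$, and the standard Sobolev embedding in exponential coordinates around $e$ yields $|f(\Gamma g)|^2 \ll \|f\|_{\scrS^2(\GaG)}^2$, which is the claimed bound since $\sYGa(\Gamma g)^{1/2}=1$ in this regime.

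In the cuspidal case $\sYGa(\Gamma g)=Y>1$ I would invoke the conjugation invariance in Proposition \ref{Yprops}\textit{(3)} to reduce to $\eta=\infty$, $h_\eta=e$, $\Stab_\Gamma(\infty)=\pm\smatr{1}{\ZZ}{0}{1}$, so that $Y=\Im(g\cdot i)$. The key geometric step is to show that for some $\delta=\delta(\Gamma)>0$ the anisotropic neighbourhood
\begin{equation*}
U_Y := \bigl\{\, n_{t_1} a_{e^{t_2}} k_{t_3} : |t_1|<(2Y)^{-1},\; |t_2|,|t_3|<\delta\,\bigr\}
\end{equation*}
maps injectively into $\GaG$ under $u\mapsto \Gamma g u$. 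The $N$-radius is forced by $\Stab_\Gamma(\infty)$: if $\gamma=n_m$ with $m\in\ZZ\setminus\{0\}$, the identity $a_Y^{-1} n_m a_Y = n_{m/Y}$ shows that $g^{-1}\gamma g$ has $N$-component of size $\asymp |m|/Y$, so a $N$-radius of $(2Y)^{-1}$ is the largest admissible. Returns from $\gamma\notin \Stab_\Gamma(\infty)$ are kept a $\Gamma$-uniform distance from $e$ by the Shimizu-type argument already used in the proof of Proposition \ref{Yprops}\textit{(4)}: such $\gamma$ have lower-left entry $|c|\geq 1$, hence $\gamma(g\cdot i)$ falls outside $\scrH(\infty,1)$, so $\dist(\gamma g\cdot i, g\cdot i)\gtrsim \log Y$, far larger than $\delta$.

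With injectivity in hand the proof then proceeds by applying the standard Sobolev embedding $|\tilde f(0)|^2 \ll \|\tilde f\|_{H^2([0,1]^3)}^2$ after rescaling the $N$-coordinate by $2Y$. Under the substitution $s_1=2Y t_1$ the $L^2$-norm of $f$ over $U_Y$ picks up a Jacobian factor $Y$, while each $t_1$-derivative produces a compensating $Y^{-1}$; thus derivatives taken in the $N$-direction contribute coefficients $O(Y^{-1})$, derivatives in the $A$ or $K$ directions contribute $O(Y)$, and summing over all monomials of order $\leq 2$ yields $|f(\Gamma g)|^2 \ll_\Gamma Y\,\|f\|_{\scrS^2(\GaG)}^2$. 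Rather than carry out this rescaled Sobolev computation in Lie-theoretic language one may simply invoke Bernstein-Reznikov \cite[Proposition B.2]{Bern}, which packages exactly this anisotropic bound in terms of the invariant height function.

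The main technical obstacle is the injectivity claim, and specifically the uniformity in $\Gamma g$ of the constant $\delta$: one must simultaneously control the stabiliser $\Stab_\Gamma(\infty)$, which dictates the $N$-shrinkage by $Y^{-1}$, and the complement $\Gamma\setminus\Stab_\Gamma(\infty)$, whose returns must be kept off of $U_Y U_Y^{-1}$. The latter relies on the Shimizu-type estimate from Proposition \ref{Yprops}\textit{(4)} together with the fact that there are only finitely many $\Gamma$-conjugacy classes of pfps, so $\delta$ can indeed be chosen uniformly in terms of $\Gamma$ alone; everything else is bookkeeping.
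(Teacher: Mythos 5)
The paper's proof of this lemma is a one-line citation to \cite[Proposition 6]{Ed4}, which is the Bernstein--Reznikov automorphic Sobolev bound \cite[Proposition B.2]{Bern} adapted to the geometrically finite setting; your anisotropic rescaled-Sobolev argument (injective box with $N$-radius $\asymp Y^{-1}$, rescale, apply $H^2([-1,1]^3)\hookrightarrow C^0$) reconstructs exactly that mechanism, so you are taking essentially the same route. One small imprecision worth noting: the estimate $\dist(\gamma g\cdot i,g\cdot i)\gtrsim\log Y$ for $\gamma\notin\Stab_\Gamma(\infty)$ is not ``far larger than $\delta$'' when $Y$ is only slightly above $1$, but the lower-left-entry consequence of Shimizu (that $g^{-1}\gamma g$ then has lower-left entry of magnitude $\gg 1$, so cannot lie in $U_YU_Y^{-1}$) already excludes such returns uniformly, and that transition regime is in any case absorbed into your thick-part injectivity-radius bound, so the gap is cosmetic.
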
 
\begin{proof}
This is \cite[Proposition 6]{Ed4}. Observe that ``$\sYGa$" in \cite{Ed4} is equal to ``$\widetilde{\sYGa}$" (cf.\ \eqref{Ydef}) here.
\end{proof}
For ``smooth enough" functions in the subrepresentations $\scrC_i$, we have the following stronger pointwise bound:
\begin{lem}\label{Compbdd} Given $i\in\lbrace 0,\ldots , I\rbrace$ and $s_i$ as in Proposition \ref{decomp},
\begin{equation*}
|f(\Gamma g)|\ll_{\Gamma} \|f\|_{\scrS^3(\GaG)}\sYGa(\Gamma g)^{1-s_i}\qquad \forall f\in\scrC_i\cap\scrS^3(\GaG),\,g\in G.
\end{equation*}
\end{lem}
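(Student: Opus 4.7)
Since $s_i > \frac{1}{2}$, we have $1 - s_i < \frac{1}{2}$, so the claim strengthens Lemma \ref{Sobbdd} only in a cusp neighbourhood; when $\sYGa(\Gamma g) = 1$ Lemma \ref{Sobbdd} immediately gives $|f(\Gamma g)| \ll \|f\|_{\scrS^2(\GaG)} \leq \|f\|_{\scrS^3(\GaG)} \sYGa(\Gamma g)^{1 - s_i}$. The substantive case is therefore $\sYGa(\Gamma g) > 1$. By Proposition \ref{Yprops}\textit{(4)}, $\Gamma$-invariance, and conjugation by one of the finitely many representatives $h_{\eta_j}\in\scrN_{\eta_j}^{(\Gamma)}$ (which affects only the $\Gamma$-dependent constant), I may assume the cusp in question is $\infty$, with $\Gamma \cap \Stab_G(\infty) = \pm\smatr{1}{\ZZ}{0}{1}$, and write $g = n_x a_y k$ with $y = \sYGa(\Gamma g) > 1$.

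The plan is then to carry out a cuspidal Fourier expansion and use the complementary-series hypothesis to select the slower growth rate. Since the lift of $f$ is $\ZZ$-periodic in $x$, I would write
$$f(n_x a_y k) = \sum_{n \in \ZZ} \hat{f}_n(y, k) \, e^{2\pi i n x}.$$
The condition $f \in \scrC_i$ means $\scrC f = s_i(s_i - 1) f$; substituting the Fourier expansion and decomposing into $K$-isotypic components (recall $\scrC = H^2 - H + X_+ X_-$) produces, for each $n$ and each $K$-type, a second-order ODE in $y$ whose indicial exponents at $y = \infty$ are $s_i$ and $1 - s_i$. For $n = 0$, the $L^2(\GaG)$ condition kills the branch $y^{s_i}$ (since $\int^\infty y^{2s_i}\,\frac{dy}{y^2}$ diverges for $s_i > \frac{1}{2}$), leaving $\hat{f}_0(y, k) = y^{1 - s_i} \Psi(k)$ for some $\Psi \in C^\infty(K)$. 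For $n \neq 0$ the ODE is of Whittaker type, whose $L^2$ solutions decay at least like $K_{s_i - 1/2}(2\pi |n| y)$, hence exponentially in $y$; summing over $n \neq 0$ produces a term of size $O(e^{-c y}\|f\|_{\scrS^3(\GaG)})$ which is absorbed into the target bound.

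To control $\Psi$ pointwise, I would use Parseval in $x$: for any $y_0 > 1$,
$$y_0^{2(1 - s_i)} \|\Psi\|_{L^2(K)}^2 = \int_K |\hat{f}_0(y_0, k)|^2\,d\mu_K(k) \leq \int_0^1 \int_K |f(n_x a_{y_0} k)|^2\,dx\,d\mu_K(k).$$
Integrating $y_0$ over a short slab and using the Iwasawa form $d\mu_G = y^{-2}\,dx\,dy\,d\mu_K$ bounds the right side by $\|f\|_{L^2(\GaG)}^2$ up to a $\Gamma$-dependent constant. Running the same argument with Lie derivatives of $f$ (which remain in $\scrC_i$ since $\scrC_i$ is $G$-invariant) and applying a standard Sobolev embedding on the compact group $K$ promotes the $L^2(K)$ estimate to the pointwise bound $\|\Psi\|_\infty \ll \|f\|_{\scrS^3(\GaG)}$. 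Combining with the exponential decay of the non-zero modes yields $|f(g)| \ll \|f\|_{\scrS^3(\GaG)}\, y^{1 - s_i}$, as required.

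The main obstacle is the structural step identifying $\hat{f}_0(y, k)$ as a pure $y^{1-s_i}$ branch across all $K$-isotypic components: one must carefully track the $K$-type-dependent form of the Casimir ODE and verify that $L^2$-integrability indeed removes the $y^{s_i}$ solution in every $K$-type, not just in the spherical one. Once this is in place, the remaining steps are routine harmonic analysis in the cusp, paralleling the finite-volume arguments underlying Lemma \ref{Sobbdd} and \cite{Andjmd}.
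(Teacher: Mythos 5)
Your proposal is correct and is essentially the content of the cited argument: the paper's proof is just a citation to \cite[Lemma 16]{Andjmd} together with the remark that the ``constant term'' calculation in the cusps transfers verbatim because the cusps of a geometrically finite infinite-volume surface have the same (full-rank) structure as in the lattice case, and what you have sketched -- conjugating the cusp to $\infty$, Fourier-expanding in the unipotent direction, using the Casimir eigenvalue $s_i(s_i-1)$ and the $L^2$ condition to extract the $y^{1-s_i}$ branch in the zero mode, exponential decay of nonzero modes via $K$-Bessel asymptotics, and Parseval plus a $K$-Sobolev embedding promoted by $G$-invariance of $\scrC_i$ -- is exactly the mechanism behind that lemma. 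You correctly flag the one nonroutine point (tracking the indicial exponents of the Casimir ODE across all $K$-types, not only the spherical one) as the place requiring care; this is indeed the technical core of \cite[Lemma 16]{Andjmd}.
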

\begin{proof}
This is \cite[Lemma 16]{Andjmd}. Observe that the proof there essentially follows from ``constant term" calculations in the cusps of $\GaG$. For $G=\SL(2,\RR)$ and $\Gamma$ geometrically finite, the cusps have the same structure as for the cusps in the case $\Gamma$ is a lattice (that is to say: all cusps have full rank). This enables the proof given in \cite{Andjmd} to be carried over without modification.
\end{proof}
\section{Patterson-Sullivan Densities and Measures}\label{PatSulSec}
Here we recall the definitions of the Patterson-Sullivan densities on $\partial_{\infty}\HH$ and measures on $N$-orbits in $\GaG$. Since we will require these construction for conjugations $g^{-1} \Gamma g$ $(g\in \G$) as well as for $\Gamma$, we will be (perhaps overly) careful with expressing dependencies on $\Gamma$.

\subsection{Conformal densities}
We start by recalling the definition of a conformal density. Let $H$ be a subgroup of $G$. An \emph{$H$-invariant conformal density of dimension $\delta$} is a collection $\lbrace \mu_z\rbrace_{z\in\HH}$ of finite Borel measures on $\partial_{\infty}\HH$ that satisfy
\begin{equation}\label{nuprops}
\left(\frac{d\mu_w}{d\mu_z}\right)(u)=e^{-\delta \beta_u (w,z)},\qquad h_* \mu_{z}=\mu_{h\cdot  z}\qquad \forall z,w\in\HH,\,u\in\partial_{\infty}\HH,\,h\in H.
\end{equation}
We recall the (standard) notation used here: for a measure $\mu$ on $\HH\cup \partial_{\infty}\HH$ and $g\in G$, the measure $g_*\mu$ is defined via $(g_*\mu)(A)=\mu(g^{-1}\cdot A)$ for suitable $A\subset(\HH\cup \partial_{\infty}\HH)$. Also, $\beta_u(w,z)$ denotes the \emph{Busemann cocycle}, i.e., for $u\in\partial_{\infty}\HH$,
\begin{equation*}
\beta_u(w,z):= \lim_{t\rightarrow\infty} \mathrm{dist}(w,\xi_t)-\mathrm{dist}(z,\xi_t)\qquad \forall w,\,z\in\HH,
\end{equation*}
where $\xi_t$ is any geodesic ray in $\HH$ tending to $u$.

There exists a \emph{unique up to scaling} $\Gamma$-invariant conformal density of dimension $\delGa$, called the \emph{Patterson-Sullivan density} (cf.\ \cite{Patterson2, Sul}). Given $w\in\HH$, we may realize this conformal density as the collection $\lbrace \nu_{z}^{(\Gamma,w)}\rbrace_{z\in\HH}$, where each $\nu_{z}^{(\Gamma,w)}$ is defined via the weak limit
\begin{equation}\label{nudef}
\nu_{z}^{(\Gamma,w)}:=\lim_{s\rightarrow \delGa^+} \frac{1}{\sum_{\gamma\in\Gamma} e^{-s\,\mathrm{dist}(\gamma\cdot w,w)}}\sum_{\gamma\in\Gamma} e^{-s\,\mathrm{dist}( z, \gamma\cdot w)} \delta_{\gamma\cdot w}\qquad \forall z,w\in \HH.
\end{equation}
(here $\delta_{\zeta}$ denotes the unit mass at $\zeta\in\HH$). We recall that all the measures in the Patterson-Sullivan density are supported on $\Lambda(\Gamma)$ and are non-atomic; we may thus also view it as a collection of measures on $\RR=\partial_{\infty}\HH\setminus\lbrace\infty\rbrace$.

Since the Patterson-Sullivan density is unique up to scaling, there exists a function $\scrP_{\Gamma}:\HH\rightarrow\RR_{>0}$ such that
\begin{equation}\label{nuscaling}
\nu_{z}^{(\Gamma,w)}=\scrP_{\Gamma}(w)\,\nu_{z}^{(\Gamma,i)}\qquad\forall z,w\in\HH.
\end{equation}
Note that it follows from \eqref{nudef} that $\scrP_{\Gamma}(\gamma \cdot w)=\scrP_{\Gamma}(w)$ for all $\gamma\in\Gamma$, $w\in\HH$. 
\begin{lem}\label{nuzwbound}
$ $
\begin{enumerate}[i)]
\item $\nu_z^{(\Gamma,w)}(A)\leq e^{\delGa \mathrm{dist}(z,v)}\nu_v^{(\Gamma,w)}(A)\qquad \forall z,\,w\in\HH,\,A\subset \partial_{\infty}\HH\;\;\mathrm{measurable.}$
\item $e^{-\delGa \mathrm{dist}(w,\Gamma\cdot i)} \leq \scrP_{\Gamma}(w)\leq e^{\delGa \mathrm{dist}(w,\Gamma \cdot i)}\qquad \forall w\in \HH$.
\end{enumerate}
\end{lem}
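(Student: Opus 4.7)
Part (i) is a direct consequence of the Radon-Nikodym relation in \eqref{nuprops}. First I would apply that relation to the pair of measures $\nu_z^{(\Gamma,w)}$ and $\nu_v^{(\Gamma,w)}$ (from the same conformal density, so both indexed by the same basepoint $w$) to write
\begin{equation*}
\nu_z^{(\Gamma,w)}(A) = \int_A e^{-\delGa\beta_u(z,v)}\,d\nu_v^{(\Gamma,w)}(u).
\end{equation*}
The triangle inequality for the Busemann cocycle gives $|\beta_u(z,v)|\leq \dist(z,v)$ uniformly in $u\in\partial_\infty\HH$, since each $\dist(z,\xi_t)-\dist(v,\xi_t)$ is bounded in absolute value by $\dist(z,v)$. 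Bounding the exponential by $e^{\delGa\dist(z,v)}$ and pulling it out of the integral yields (i).

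For part (ii), the plan is to first pin down $\scrP_\Gamma(w)$ by a total-mass computation, and then apply (i). Using the defining weak limit \eqref{nudef} with $z=w$, the numerator and denominator coincide, so
\begin{equation*}
\nu_w^{(\Gamma,w)}(\partial_\infty\HH)=\lim_{s\to\delGa^+}\frac{\sum_\gamma e^{-s\dist(w,\gamma\cdot w)}}{\sum_\gamma e^{-s\dist(\gamma\cdot w,w)}}=1.
\end{equation*}
Combining this with \eqref{nuscaling} evaluated at $z=w$ gives the clean identity
\begin{equation*}
\scrP_\Gamma(w)=\frac{1}{\nu_w^{(\Gamma,i)}(\partial_\infty\HH)}.
\end{equation*}
Now apply (i) to the density $\{\nu_z^{(\Gamma,i)}\}_z$ with $A=\partial_\infty\HH$: taking $z=w$, $v=i$ and observing that $\nu_i^{(\Gamma,i)}(\partial_\infty\HH)=1$ by the computation just made, one obtains
\begin{equation*}
e^{-\delGa\dist(w,i)}\leq \nu_w^{(\Gamma,i)}(\partial_\infty\HH)\leq e^{\delGa\dist(w,i)},
\end{equation*}
and hence $e^{-\delGa\dist(w,i)}\leq \scrP_\Gamma(w)\leq e^{\delGa\dist(w,i)}$.

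The final step is to replace $\dist(w,i)$ by $\dist(w,\Gamma\cdot i)$, using the $\Gamma$-invariance $\scrP_\Gamma(\gamma\cdot w)=\scrP_\Gamma(w)$ noted after \eqref{nuscaling}. Applying the bound just derived to $\gamma^{-1}\cdot w$ in place of $w$ for arbitrary $\gamma\in\Gamma$, and using that $\dist(\gamma^{-1}\cdot w,i)=\dist(w,\gamma\cdot i)$, gives $e^{-\delGa\dist(w,\gamma\cdot i)}\leq\scrP_\Gamma(w)\leq e^{\delGa\dist(w,\gamma\cdot i)}$ for every $\gamma\in\Gamma$. Taking the sup of the lower bound and the inf of the upper bound over $\gamma\in\Gamma$ produces (ii). The only mildly delicate point in the whole argument is recognizing that the total-mass normalization $\nu_w^{(\Gamma,w)}(\partial_\infty\HH)=1$ is the right hook for converting the scaling function $\scrP_\Gamma$ into something controllable by (i); everything else is a direct application of \eqref{nuprops}, \eqref{nuscaling}, and $\Gamma$-invariance.
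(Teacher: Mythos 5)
Your proof is correct and follows essentially the same path as the paper's: part (i) is the direct Radon--Nikodym plus $|\beta_u(z,v)|\leq\dist(z,v)$ estimate, and part (ii) uses the normalization $\nu_w^{(\Gamma,w)}(\partial_\infty\HH)=1$ together with the same Busemann bound (you invoke it via (i), the paper reproves it inline) and $\Gamma$-invariance of $\scrP_\Gamma$ to pass from $\dist(w,i)$ to $\dist(w,\Gamma\cdot i)$.
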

\begin{proof}
Using the observation $|\beta_u(z,v)|\leq \mathrm{dist}(z,v)$ and \eqref{nuprops}, \textit{i)} is proved as follows:
\begin{equation*}
\nu_z^{(\Gamma,w)}(A)= \int_A d\nu_z^{(\Gamma,w)}(A)(u)= \int_A e^{-\delGa\beta_u(z,v)}d\nu_v^{(\Gamma,w)}(A)\leq e^{\delGa \mathrm{dist}(z,v)}\nu_v^{(\Gamma,w)}(A).
\end{equation*}
For \textit{ii)}, note that from the definition that each $\nu_w^{(\Gamma,w)}$ is a probability measure, hence (again using $|\beta_u(z,v)|\leq \mathrm{dist}(z,v)$ and \eqref{nuprops})
\begin{equation*}
1=\int_{\partial_{\infty}\HH} d\nu_w^{(\Gamma,w)}(u)=\scrP_{\Gamma}(w) \int_{\partial_{\infty}\HH} d\nu_w^{(\Gamma,i)}(u)=\scrP_{\Gamma}(w) \int_{\partial_{\infty}\HH} e^{-\delGa\beta_u(w,i)}d\nu_i^{(\Gamma,i)}(u),
\end{equation*} 
so
\begin{equation*}
e^{-\delGa \mathrm{dist}(w,i)} \scrP_{\Gamma}(w)\leq 1 \leq e^{\delGa \mathrm{dist}(w,i)} \scrP_{\Gamma}(w).
\end{equation*}
Now using the $\Gamma$-invariance of $\scrP_{\Gamma}$, we have
\begin{equation*}
 \left(\inf_{\gamma\in \Gamma}e^{\delGa \mathrm{dist}(w,\gamma i)}\right)^{-1} \scrP_{\Gamma}(w)\leq 1 \leq \left(\inf_{\gamma\in \Gamma}e^{\delGa \mathrm{dist}(w,\gamma i)}\right) \scrP_{\Gamma}(w).
\end{equation*}
\end{proof}

Using \eqref{nudef} we obtain the following transformation rule:
\begin{lem}\label{conjlem}
For a geometrically finite group $\Gamma<G$ and $g\in G$, the Patterson-Sullivan densities of $\Gamma$ and $g^{-1} \Gamma g$ satisfy
\begin{equation*}
\nu_z^{(g^{-1} \Gamma g,w)}= (g^{-1})_*\nu_{g\cdot z}^{(\Gamma,g\cdot w)} \qquad \forall z,w\in \HH.
\end{equation*}
\end{lem}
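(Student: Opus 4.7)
The plan is to apply the explicit construction \eqref{nudef} of the Patterson--Sullivan density to the conjugate group $g^{-1}\Gamma g$, perform a reindexing of the Poincar\'e sum by $\gamma \mapsto g^{-1}\gamma g$, and then exploit the $G$-invariance of $\dist$ to rewrite everything in terms of $\Gamma$.

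First I would verify that conjugation preserves the critical exponent, i.e. $\delta_{g^{-1}\Gamma g} = \delGa$. This follows from
\begin{equation*}
\dist(g^{-1}\gamma g\cdot i,\, i) = \dist(\gamma\cdot (g\cdot i),\, g\cdot i),
\end{equation*}
so the Poincar\'e series for $g^{-1}\Gamma g$ based at $i$ coincides with that of $\Gamma$ based at $g\cdot i$; the latter has the same abscissa of convergence as the series for $\Gamma$ based at $i$, since changing the basepoint only multiplies the general term by a bounded factor. Next, starting from \eqref{nudef} applied to $g^{-1}\Gamma g$, I would use isometry invariance of $\dist$ to write
\begin{equation*}
\dist(g^{-1}\gamma g\cdot w, w) = \dist(\gamma\cdot (g\cdot w),\, g\cdot w), \qquad \dist(z,\, g^{-1}\gamma g\cdot w) = \dist(g\cdot z,\, \gamma\cdot (g\cdot w)),
\end{equation*}
and $\delta_{g^{-1}\gamma g\cdot w} = (g^{-1})_*\delta_{\gamma\cdot (g\cdot w)}$. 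After this reindexing, the defining limit becomes
\begin{equation*}
\nu_z^{(g^{-1}\Gamma g, w)} = \lim_{s\to \delGa^+} \frac{1}{\sum_{\gamma\in \Gamma} e^{-s\dist(\gamma\cdot (g w),\, g w)}} \sum_{\gamma\in\Gamma} e^{-s\dist(g z,\, \gamma\cdot (g w))}\,(g^{-1})_*\delta_{\gamma\cdot (g w)}.
\end{equation*}

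Finally, since $g^{-1}$ acts as a homeomorphism of $\partial_\infty\HH$, the pushforward $(g^{-1})_*$ is weak-$*$ continuous on finite Borel measures and commutes with the limit. Pulling it outside yields exactly $(g^{-1})_*\nu_{g\cdot z}^{(\Gamma, g\cdot w)}$, proving the identity.

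The only potential obstacle is that, in the convergence-type case for the Poincar\'e series at $s = \delGa$, one must use Patterson's modification by a slowly varying weight $\psi(\dist(\gamma w, w))$. However, since such a weight depends only on the $G$-invariant distance, the same reindexing $\gamma\mapsto g^{-1}\gamma g$ preserves it, and the argument goes through unchanged; moreover, for geometrically finite non-elementary $\Gamma$ the Poincar\'e series is in fact of divergence type, so the unmodified limit \eqref{nudef} suffices.
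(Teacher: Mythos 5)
Your proof is correct and follows exactly the route the paper implicitly intends: the paper states the lemma with only the remark ``Using \eqref{nudef} we obtain the following transformation rule,'' leaving the direct reindexing $\gamma\mapsto g^{-1}\gamma g$ together with the isometry-invariance of $\dist$ as the expected verification, which is precisely what you carry out. Your closing remark about the divergence-type of geometrically finite $\Gamma$ (so that the unmodified Poincar\'e series in \eqref{nudef} suffices) is a correct and welcome clarification that the paper itself elides.
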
 
\subsection{Patterson-Sullivan measures on $N$-orbits}\label{PS-N}
For any $g\in G$, recall that the \emph{forward} and \emph{backward} visual maps, $[g]^+$ and $[g]^-$, of $g$ are defined by
\begin{equation*}
[g]^+:=\lim_{y\rightarrow \infty} ga_y\cdot i \in \partial_{\infty}\HH\qquad [g]^-:=\lim_{y\rightarrow 0} ga_y\cdot i \in \partial_{\infty}\HH.
\end{equation*}
Let $\Gamma g\in \GaG_{\mathrm{rad}}$, that is $[g]^+\in \Lambda_{\mathrm{rad}}(\Gamma)$. The map from $N$ to $\GaG$ given by
\begin{equation*}
n\mapsto \Gamma g n\qquad\forall n\in N
\end{equation*}
is then injective . This allows us to ``lift" measures in the Patterson-Sullivan density to a measure on $\Gamma g N \subset \GaG$ by
\begin{equation*}
d\mu_{\Gamma g N}^{\mathrm{PS}}(\Gamma g n):= e^{\delGa \beta_{[gn]^-}(z,gn \cdot z)}\,d\nu_{z}^{(\Gamma,i)}([gn ]^-)\qquad \forall n \in N,
\end{equation*}
where $z\in \HH$. Since $\Gamma g N\leftrightarrow \RR$, we may view this as a measure on $\RR$ (or $N$) via
\begin{equation*}
d\mu_{\Gamma g N}^{\mathrm{PS}}(x)=d\mu_{\Gamma g N}^{\mathrm{PS}}(n_x)= e^{\delGa \beta_{[g n_x]^-}(z,gn_x\cdot z)}\,d\nu_{z}^{(\Gamma,i)}([gn_x]^-) =\, e^{\delGa \beta_{g\cdot x}(z,g\cdot (z+x))}\,d\nu_{z}^{(\Gamma,i)}(g\cdot x).
\end{equation*}
The properties in \eqref{nuprops} show that $\mu_{\Gamma g N}^{\mathrm{PS}}$ is well-defined, i.e.\ independent of the chosen representative of $\Gamma g$ and basepoint $z\in \HH$. Furthermore, by \cite[Lemma 2.4]{MoOh}, $\mu_{\Gamma g N}^{\mathrm{PS}}$ is an infinite measure (on $\RR$ alt.\ $N$). Recall that $B_T= \lbrace  t\in\RR\,:\, |t|\leq T\rbrace$.

\begin{lem}\label{conjBT}
\begin{equation*}
\mu_{\Gamma g N}^{\mathrm{PS}}(B_T)=\frac{\mu_{ (g^{-1} \Gamma g) e N}^{\mathrm{PS}}(B_T)}{\scrP_{\Gamma}(g\cdot i)}\qquad \forall g\in G_{\mathrm{rad}}, \,T\geq 0.
\end{equation*}
\end{lem}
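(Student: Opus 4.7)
The plan is to verify that the two measures $\mu_{\Gamma gN}^{\mathrm{PS}}$ and $\mu_{(g^{-1}\Gamma g)eN}^{\mathrm{PS}}$, regarded as measures on $N\cong \RR$ via the parametrization $n_x\leftrightarrow x$, differ by the constant factor $\scrP_{\Gamma}(g\cdot i)$. Integrating over $B_T$ then gives the identity. First I would note that the critical exponents satisfy $\delta_{g^{-1}\Gamma g}=\delGa$, since conjugation by $g$ shifts all distances $\dist(\gamma\cdot i,i)$ by a bounded amount.

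Next I would compute each measure using the definition given just before the lemma, choosing $z=i$ as the reference point on each side. Write $\Gamma':=g^{-1}\Gamma g$. Then
\begin{equation*}
d\mu_{\Gamma gN}^{\mathrm{PS}}(n_x)=e^{\delGa\beta_{g\cdot x}(i,\,g\cdot(i+x))}\,d\nu_i^{(\Gamma,i)}(g\cdot x),
\end{equation*}
while, using $[en_x]^-=x$,
\begin{equation*}
d\mu_{\Gamma' eN}^{\mathrm{PS}}(n_x)=e^{\delGa\beta_x(i,\,i+x)}\,d\nu_i^{(\Gamma',i)}(x).
\end{equation*}
The key step is now Lemma \ref{conjlem}, which gives $\nu_i^{(\Gamma',i)}=(g^{-1})_*\nu_{g\cdot i}^{(\Gamma,g\cdot i)}$, together with the scaling relation \eqref{nuscaling} and the transformation rule \eqref{nuprops} to rewrite $\nu_{g\cdot i}^{(\Gamma,g\cdot i)}=\scrP_{\Gamma}(g\cdot i)\,\nu_{g\cdot i}^{(\Gamma,i)}$ and $d\nu_{g\cdot i}^{(\Gamma,i)}(u)=e^{-\delGa\beta_u(g\cdot i,\,i)}d\nu_i^{(\Gamma,i)}(u)$. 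Pushing forward by $g^{-1}$ and substituting $u=g\cdot x$ yields
\begin{equation*}
d\nu_i^{(\Gamma',i)}(x)=\scrP_{\Gamma}(g\cdot i)\,e^{-\delGa\beta_{g\cdot x}(g\cdot i,\,i)}\,d\nu_i^{(\Gamma,i)}(g\cdot x).
\end{equation*}

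What remains is a bookkeeping step with Busemann cocycles. Combining the two displays for $d\mu_{\Gamma' eN}^{\mathrm{PS}}(n_x)$, the ratio against $d\mu_{\Gamma gN}^{\mathrm{PS}}(n_x)$ equals
\begin{equation*}
\scrP_{\Gamma}(g\cdot i)\cdot\exp\!\Bigl(\delGa\bigl[\beta_x(i,i+x)-\beta_{g\cdot x}(g\cdot i,i)-\beta_{g\cdot x}(i,g\cdot(i+x))\bigr]\Bigr).
\end{equation*}
To see that the exponential equals $1$, I would use the $G$-invariance of the Busemann cocycle, $\beta_{g\cdot u}(g\cdot z,g\cdot w)=\beta_u(z,w)$, to identify $\beta_x(i,i+x)=\beta_{g\cdot x}(g\cdot i,g\cdot(i+x))$, and then apply the additive cocycle property $\beta_u(a,c)=\beta_u(a,b)+\beta_u(b,c)$ at the point $u=g\cdot x$ with $a=g\cdot i$, $b=i$, $c=g\cdot(i+x)$ to see that the bracketed expression vanishes. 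Hence $d\mu_{\Gamma' eN}^{\mathrm{PS}}=\scrP_{\Gamma}(g\cdot i)\,d\mu_{\Gamma gN}^{\mathrm{PS}}$, and integrating over $B_T$ gives the claim.

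The only real subtlety is the Busemann bookkeeping in the final paragraph; once one is careful with base points and with the direction of the pushforward in Lemma \ref{conjlem}, everything collapses by the cocycle identity. There is nothing deep going on here, so I do not expect a genuine obstacle.
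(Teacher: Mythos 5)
Your proposal is correct and takes essentially the same route as the paper: both deduce the identity from Lemma \ref{conjlem} together with the scaling relation \eqref{nuscaling} and the conformal transformation rule \eqref{nuprops}, and both close the calculation by the same Busemann cocycle identity (the paper phrases it as $\beta_x(z,x+z)-\beta_{g\cdot x}(g\cdot z,z)=\beta_{g\cdot x}(z,g\cdot(z+x))$, while you equivalently show the bracketed expression vanishes). The only cosmetic difference is that you match the two measures as densities before integrating, while the paper manipulates the integral $\mu_{(g^{-1}\Gamma g)eN}^{\mathrm{PS}}(B_T)$ directly.
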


\begin{proof}
Using the definition of $\mu_{ (g^{-1} \Gamma g) e N}^{\mathrm{PS}}$, \eqref{nuprops}, \eqref{nuscaling}, and Lemma \ref{conjlem} (as well as the fact that $\delta_{g^{-1} \Gamma g}=\delGa$):
\begin{align*}
\mu_{ (g^{-1} \Gamma g) e N}^{\mathrm{PS}}(B_T)&=\int_{-T}^T\,d \mu_{ (g^{-1} \Gamma g) e N}^{\mathrm{PS}}(x)=\int_{-T}^Te^{\delGa\beta_{x}(z, x+z)}\,d\nu_z^{(g^{-1} \Gamma g,i)}(x)
\\&=\int_{-T}^Te^{\delGa\beta_{x}(z, x+z)}\,d\big((g^{-1})_*\nu_{g\cdot z}^{(\Gamma,g\cdot i )}\big)(x)=\scrP_{\Gamma}(g\cdot i)\int_{-T}^Te^{\delGa\beta_{x}(z, x+z)}\,d\nu_{g\cdot z}^{(\Gamma,i )}(g\cdot x)
\\&=\scrP_{\Gamma}(g\cdot i)\int_{-T}^Te^{\delGa\beta_{x}(z, x+z)}e^{-\delGa\beta_{g\cdot x}(g\cdot z, z)}\,d\nu_{ z}^{(\Gamma,i )}(g\cdot x).
\end{align*}
Since $g$ acts as an isometry on $\HH$, $\beta_x(z,x+z)=\beta_{g\cdot x}(g\cdot z, g\cdot(x+z))$ for all $g\in G$, $z\in\HH$, $x\in \partial_{\infty}\HH$. This, combined with the cocycle property of $\beta$, gives
\begin{equation*}
\beta_{x}(z, x+z)-\beta_{g\cdot x}(g\cdot z, z)=\beta_{g\cdot x}(g\cdot z, g\cdot(x+z))-\beta_{g\cdot x}(g\cdot z, z)=\beta_{g\cdot x}(z,g\cdot(z+x)),
\end{equation*}
and so (once again using the definition of $\mu_{ \Gamma  g N}^{\mathrm{PS}}$)
\begin{equation*}
\mu_{ (g^{-1} \Gamma g) e N}^{\mathrm{PS}}(B_T)=\scrP_{\Gamma}(g\cdot i)\int_{-T}^Te^{\delGa \beta_{g\cdot x}(z,g\cdot(z+x))}\,d\nu_{ z}^{(\Gamma,i )}(g\cdot x)=\scrP_{\Gamma}(g\cdot i)\mu_{ \Gamma  g N}^{\mathrm{PS}}(B_T).
\end{equation*}
\end{proof}
\begin{remark}
Observe that since $\scrP_{\Gamma }(\gamma g \cdot i)=\scrP_{\Gamma}(g\cdot i)$, both sides of the equation in Lemma \ref{conjBT} are therefore independent of the representative chosen from $\Gamma g$. We will henceforth also view $\scrP_{\Gamma}$ as a function on $\GaG$ by defining $\scrP_{\Gamma}(\Gamma g):=\scrP_{\Gamma}(g\cdot i)$. Note that Lemma \ref{nuzwbound} \textit{ii)} then gives
\begin{equation*}
 e^{-\delGa\mathsf{dist}_{\GaG}(\Gamma g, \Gamma e)}\leq \scrP_{\Gamma}(\Gamma g)\leq  e^{\delGa\mathsf{dist}_{\GaG}(\Gamma g, \Gamma e)}.
\end{equation*}
\end{remark}
Lemma \ref{conjBT} will be used together with the following observation: if $\infty\in\Lambda_{\mathrm{rad}}(\Gamma)$, then $\Gamma e$ is radial, and
\begin{equation}\label{Ninftmeas}
\mu_{\Gamma e N}^{\mathrm{PS}}(B_T)=\int_{-T}^T e^{\delGa \beta_{ x}(i,x+i)}\,d\nu_{i}^{(\Gamma,i)}( x)=\int_{-T}^T (1+x^2)^{\delGa}\,d\nu_{i}^{(\Gamma,i)}(x).
\end{equation}
We make one final observation regarding $\mPSGgN$, which is proved using calculations similar to those in the proof of Lemma \ref{conjBT}:
\begin{lem}\label{mPSscaling}
For all $T>0$ and $\scrI\subset \RR$ measurable,
\begin{equation*}
\mPSGgN(\scrI)=T^{\delGa}\mu^{\mathrm{PS}}_{\Gamma g a_T N}(\sfrac{\scrI}{T}),
\end{equation*}
where $\frac{\scrI}{T}=\lbrace \frac{x}{T}\,:\,x\in \scrI\rbrace$.
\end{lem}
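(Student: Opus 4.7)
\begin{prfdisc*}
The plan is to compute both Patterson-Sullivan densities directly from the definition using a single common basepoint $z\in\HH$, and compare them pointwise after the substitution $x=Ty$.

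First, I unwind the definition of $\mu^{\mathrm{PS}}_{\Gamma hN}$ with $h=g$ and with $h=ga_T$. For the latter, the identities $a_T\cdot w=Tw$ (Möbius action on $\HH\cup\partial_\infty\HH$) and $a_Tn_ya_T^{-1}=n_{Ty}$ give $[ga_Tn_y]^-=g\cdot(Ty)$ and $ga_Tn_y\cdot z=g\cdot T(z+y)$, so
$$d\mu^{\mathrm{PS}}_{\Gamma gN}(x)=e^{\delGa\beta_{g\cdot x}(z,\,g\cdot(z+x))}\,d\nu_z^{(\Gamma,i)}(g\cdot x),$$
$$d\mu^{\mathrm{PS}}_{\Gamma ga_TN}(y)=e^{\delGa\beta_{g\cdot Ty}(z,\,g\cdot T(z+y))}\,d\nu_z^{(\Gamma,i)}(g\cdot Ty).$$
Setting $x=Ty$, the two $d\nu_z^{(\Gamma,i)}$-factors coincide exactly, so the Radon-Nikodym ratio of the two densities is purely exponential.

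Next, using the Busemann cocycle the exponent of that ratio reduces to $\delGa\,\beta_{g\cdot Ty}(g\cdot T(z+y),\,g\cdot(z+Ty))$. I then apply the isometric invariance $\beta_{h\cdot u}(h\cdot w_1,h\cdot w_2)=\beta_u(w_1,w_2)$ with $h=gn_{Ty}$ --- which sends $0\mapsto g\cdot Ty$, $Tz\mapsto g\cdot T(z+y)$, and $z\mapsto g\cdot(z+Ty)$ --- collapsing the exponent to $\delGa\,\beta_0(Tz,z)$ and eliminating all dependence on $g$ and $y$.

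The last step is to evaluate $\beta_0(Tz,z)$ using the explicit Poisson-type formula $\beta_u(w_1,w_2)=\log\bigl(\tfrac{|w_1-u|^2/\Im w_1}{|w_2-u|^2/\Im w_2}\bigr)$ valid for $u\in\RR$. At $u=0$ the $|z|^2$ factors cancel, leaving $\beta_0(Tz,z)=\log T$, \emph{independent} of $z$. Hence $d\mu^{\mathrm{PS}}_{\Gamma gN}(Ty)=T^{\delGa}\,d\mu^{\mathrm{PS}}_{\Gamma ga_TN}(y)$, and integrating $y$ over $\scrI/T$ (equivalently $x=Ty$ over $\scrI$) yields the claim. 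I expect the main care to be needed in the middle step: one must hold the \emph{same} basepoint $z$ fixed in both PS formulas throughout (rather than switching, say, to $z/T$ on one side), so that the $G$-invariance of $\beta$ cleanly removes all dependence on $g$ and $y$ and isolates the single universal constant $\beta_0(Tz,z)=\log T$ responsible for the factor $T^{\delGa}$.
\end{prfdisc*}
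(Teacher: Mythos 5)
Your argument is correct and is essentially the direct unwinding of the definition that the paper indicates ("calculations similar to those in the proof of Lemma \ref{conjBT}"): you track the $d\nu_z^{(\Gamma,i)}$ pullback under $x=Ty$, reduce the ratio of Busemann weights via the cocycle identity and $G$-invariance to $\beta_0(Tz,z)$, and correctly evaluate this to $\log T$. Your closing caution about keeping the same basepoint $z$ in both formulas is exactly the right thing to watch, and the computation goes through as you describe.
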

\subsection{The Lebesgue density}\label{Lebsec}
In Sections \ref{BRmsec} and \ref{deltalessthan1/2} we will also require the \emph{Lebesgue density}. This is a $G$-invariant density of dimension one, and denoted $\lbrace m_z\rbrace_{z\in\HH}$. Each $m_z$ is non-atomic, again allowing us to view them as measures on $\RR$. Defining a measure $\widetilde{\mu}$ on $\RR$ by
\begin{equation*}
d\widetilde{\mu}(u)=(1+u^2)dm_i(u)=e^{\beta_u(i,u+i)}dm_i(u)\qquad \forall u\in\RR,
\end{equation*}
we obtain that for all $y\in\RR_{>0}$, $x,u\in\RR$,
\begin{align*}
d\widetilde{\mu}(yu+x)=&d\widetilde{\mu}(n_xa_y\cdot u)=e^{\beta_{n_xa_y\cdot u}(i,yu+x+i)}dm_i(n_xa_y\cdot u)
\\&=e^{\beta_{ u}(a_y^{-1}n_x^{-1}\cdot i,u+\frac{i}{y})}dm_{(n_x a_y)^{-1}\cdot i}( u)=e^{\beta_{ u}(a_y^{-1}n_x^{-1}\cdot i,u+\frac{i}{y})}e^{-\beta_{u}(a_y^{-1}n_x^{-1}\cdot i,i)}dm_{ i}( u)
\\&=e^{\beta_{u}( i,u+\frac{i}{y})}dm_{ i}( u)= y(1+u^2)\,dm_i(u)= y\,d\widetilde{\mu}(u).
\end{align*}
The measure $\widetilde{\mu}$ must therefore be a scalar multiple of the Lebesgue measure. This allows us to therefore assume that the density $\lbrace m_z\rbrace_{z\in\HH}$ has been scaled so that $dm_i(u)=\frac{du}{1+u^2}$.
\subsection{The shadow lemma}\label{Shadowsec}
We will use a version of \emph{Sullivan's Shadow Lemma} to obtain (both upper and lower) bounds for the $\nu_z^{(\Gamma,i)}$-measures of certain subsets of $\partial_{\infty}\HH$. We start by recalling the definition of the \emph{base eigenfunction} $\phi_0\in L^2(\GaG)$, cf. \cite{Patterson2,Sul}. This is a $\rho(K)$-invariant function in $\scrC_{s_0}\cap \scrS^{\infty}(\GaG)$ (cf. Proposition \ref{decomp}), and is given by the formula
\begin{equation}\label{phi0def}
\phi_0(\Gamma g)=\scrN_{\Gamma}\int_{\partial_{\infty}\HH} e^{-\delGa \beta_u(g\cdot i,i)}\,d\nu_i^{(\Gamma,i)}(u),
\end{equation}
where the constant $\scrN_{\Gamma}\in \RR_{>0}$ is chosen so that $\|\phi_0\|_{L^2(\GaG)}=1$. Observe that $\phi_0(\Gamma g)>0$ for all $g\in G$. Since $\phi_0\in\scrS^3(\GaG)\cap \scrC_{s_0}$, by Lemma \ref{Compbdd}, 
\begin{equation}\label{phi0ptbdd}
|\phi_0(\Gamma g)|\ll_{\Gamma} \|\phi_0\|_{\scrS^3(\GaG)} \sYGa(\Gamma g)^{1-\delGa}\ll_{\Gamma} \sYGa(\Gamma g)^{1-\delGa}
\end{equation}
(recall that $s_0=\delGa$).

For $w\in\HH$ and $r>0$, let $B_r(w)$ denote the open (hyperbolic) ball of radius $r$ around $w$. Given another point $z\in \HH$, we let $\scrO_z(w,r)\subset \partial_{\infty}\HH$ denote the \emph{shadow of $B_r(w)$ seen from $z$}; this is the set of points $u\in \partial_{\infty}\HH$ with the property that the geodesic segment from $z$ to $u$ intersects $B_r(w)$. Observe that since $G$ acts by isometry on $\HH$, $g\cdot\scrO_z(w,r)=\scrO_{g\cdot z}(g\cdot w,r)$.

We have the following result, due to Sullivan, cf.\ \cite[Section 7]{Sul1}: 
\begin{lem}\label{shadlem}
For all $z,\,w\in\HH$, $r>0$,
\begin{equation*}
\nu_z^{(\Gamma,i)}\big(\scrO_z(w,r)\big)\ll_{\Gamma} e^{2\delGa r-\delGa\mathrm{dist}(z,w)} \sYGa(w)^{1-\delGa}. 
\end{equation*}
\end{lem}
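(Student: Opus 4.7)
The plan is to transfer the problem from basepoint $z$ to basepoint $w$ via the conformal density transformation law, and then recognise the resulting total mass $\nu_w^{(\Gamma,i)}(\partial_{\infty}\HH)$ as essentially the value of the base eigenfunction $\phi_0$ at $\Gamma g$ (for any $g$ with $g\cdot i = w$), whose pointwise bound in \eqref{phi0ptbdd} already encodes the cusp factor $\sYGa(w)^{1-\delGa}$.

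First, I would use the Radon--Nikodym identity from \eqref{nuprops} to write
\[
\nu_z^{(\Gamma,i)}\bigl(\scrO_z(w,r)\bigr) = \int_{\scrO_z(w,r)} e^{-\delGa \beta_u(z,w)} \, d\nu_w^{(\Gamma,i)}(u),
\]
and then bound the integrand on the shadow. For any $u \in \scrO_z(w,r)$, the geodesic ray from $z$ to $u$ meets $B_r(w)$, and a short Busemann-cocycle calculation (take $\xi_t$ along the ray with $\xi_{t_0}\in B_r(w)$, so that $\dist(w,\xi_t)\leq r+(t-t_0)$ for $t\geq t_0$ while $t_0\geq \dist(z,w)-r$) yields $\beta_u(z,w)\geq \dist(z,w)-2r$. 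Hence
\[
\nu_z^{(\Gamma,i)}\bigl(\scrO_z(w,r)\bigr) \leq e^{\delGa(2r-\dist(z,w))}\, \nu_w^{(\Gamma,i)}(\partial_{\infty}\HH).
\]

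Next, I would identify $\nu_w^{(\Gamma,i)}(\partial_{\infty}\HH)=\scrP_\Gamma(w)^{-1}$, which follows from \eqref{nuscaling} together with the fact that $\nu_w^{(\Gamma,w)}$ is a probability measure by construction. Rewriting \eqref{phi0def} via \eqref{nuprops}, for any $g\in G$ with $g\cdot i=w$,
\[
\phi_0(\Gamma g) = \scrN_\Gamma\int_{\partial_{\infty}\HH} e^{-\delGa \beta_u(w,i)}\, d\nu_i^{(\Gamma,i)}(u) = \scrN_\Gamma\,\nu_w^{(\Gamma,i)}(\partial_{\infty}\HH),
\]
so that $\scrP_\Gamma(w)^{-1}=\scrN_\Gamma^{-1}\phi_0(\Gamma g)$. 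The pointwise bound \eqref{phi0ptbdd} then gives $\scrP_\Gamma(w)^{-1}\ll_\Gamma \sYGa(w)^{1-\delGa}$, and substituting back into the previous display completes the proof.

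I do not expect serious obstacles here, since the cusp-sensitive content of the shadow lemma has been absorbed into the Sobolev-type pointwise estimate on $\phi_0$ from Lemma \ref{Compbdd}. The only point requiring care is that $\scrN_\Gamma$ is the $L^2$-normalisation constant for $\phi_0$; since it is a positive constant depending only on $\Gamma$, it can safely be absorbed into the implicit constant in the $\ll_\Gamma$ notation.
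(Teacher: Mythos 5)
Your proposal is correct and is essentially the paper's own proof. Both arguments hinge on the same two ingredients: the bound $\beta_u(z,w)\geq \dist(z,w)-2r$ for $u$ in the shadow, and the identity $\scrN_\Gamma\,\nu_w^{(\Gamma,i)}(\partial_\infty\HH)=\phi_0(\Gamma g)$ (for $g\cdot i=w$) together with the pointwise bound \eqref{phi0ptbdd}; the paper simply phrases this as a lower bound on $\phi_0$ obtained by restricting its defining integral to $\scrO_z(w,r)$, whereas you reach the same inequality by changing basepoint to $w$ and enlarging the domain to all of $\partial_\infty\HH$, which is the same computation read in the opposite direction.
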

\begin{proof}
Using \eqref{phi0def}, \eqref{nuprops}, and writing $w=x+i y$, we have
\begin{align*}
\phi_0(\Gamma n_x a_y)=&\scrN_{\Gamma}\int_{\RR} e^{-\delGa \beta_u(x+i y, i)}\, d\nu_i^{(\Gamma,i)}(u)=\scrN_{\Gamma}\int_{\RR} e^{-\delGa \beta_u(w, z)}\, d\nu_z^{(\Gamma,i)}(u)\\&\geq \scrN_{\Gamma} \int_{\scrO_z(w,r)} e^{\delGa \beta_u( z,w)}\, d\nu_z^{(\Gamma,i)}(u).
\end{align*}
Now, for all $u\in \scrO_{z}(w,r)$,
\begin{equation*}
\mathrm{dist}(z,w)-2 r \leq \beta_u(z,w)\leq \mathrm{\dist}(z,w),
\end{equation*}
hence 
\begin{equation*}
 \phi_0(\Gamma n_x a_y)\geq \scrN_{\Gamma} \int_{\scrO_z(w,r)} e^{\delGa \beta_u(x+i y, z)}\, d\nu_z^{(\Gamma,i)}(u)\geq \scrN_{\Gamma} e^{\delGa( \mathrm{dist}(z,w)-2r)}\nu_z^{(\Gamma,i)}\big(\scrO_z(w,r)\big).
\end{equation*}
By \eqref{phi0ptbdd}, we then have
\begin{equation*}
\nu_z^{(\Gamma,i)}\big(\scrO_z(w,r)\big)\leq \scrN_{\Gamma}^{-1} e^{-\delGa( \mathrm{dist}(z,w)-2r)}\phi_0(\Gamma n_x a_y)\ll_{\Gamma} e^{-\delGa( \mathrm{dist}(z,w)-2r)}\sYGa(n_xa_y)^{1-\delGa}.
\end{equation*}
\end{proof}
The following is a more or less straightforward consequence of Lemmas \ref{conjlem} and \ref{shadlem}:
\begin{lem}\label{Tinftymeasure}
\begin{equation*}
\nu_i^{(g^{-1}\Gamma g,i)}\big( \lbrace x\in\RR\,:\, |x|\geq T\rbrace\big)\ll_{\Gamma}\scrP_{\Gamma}(\Gamma g) T^{-\delGa} \sYGa(\Gamma g a_T)^{1-\delGa}\qquad \forall g\in G,\,T\geq 1.
\end{equation*}
\end{lem}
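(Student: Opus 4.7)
The plan is to transfer the statement from the conjugated group $g^{-1}\Gamma g$ back to $\Gamma$ via Lemma~\ref{conjlem} and the scaling relation \eqref{nuscaling}, recognize the set $\{x\in\RR:|x|\geq T\}$ as lying inside a shadow of bounded radius based at $Ti=a_T\cdot i$, and then apply Sullivan's shadow lemma (Lemma~\ref{shadlem}) directly. Combining Lemma~\ref{conjlem} with \eqref{nuscaling} yields
\[
\nu_i^{(g^{-1}\Gamma g,i)} \;=\; (g^{-1})_*\nu_{g\cdot i}^{(\Gamma,g\cdot i)} \;=\; \scrP_\Gamma(g\cdot i)\,(g^{-1})_*\nu_{g\cdot i}^{(\Gamma,i)},
\]
so
\[
\nu_i^{(g^{-1}\Gamma g,i)}\bigl(\{|x|\geq T\}\bigr) \;=\; \scrP_\Gamma(\Gamma g)\,\nu_{g\cdot i}^{(\Gamma,i)}\bigl(g\cdot\{|x|\geq T\}\bigr),
\]
reducing matters to a bound on the $\nu_{g\cdot i}^{(\Gamma,i)}$-mass of the subset $g\cdot\{|x|\geq T\}\subset\partial_{\infty}\HH$.

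The geometric heart of the argument is the claim that there is a fixed constant $r_0>0$ (any $r_0$ with $\sinh r_0\geq 1$ will do) such that
\[
\{x\in\RR\,:\,|x|\geq T\}\;\subset\;\scrO_i(Ti,r_0)\qquad\forall\,T\geq 1.
\]
This is a short hyperbolic-geometry computation: the geodesic from $i$ to $x\in\RR\setminus\{0\}$ is the Euclidean semicircle with center $(x^2-1)/(2x)$ and radius $R(x):=(x^2+1)/(2|x|)$, and using $\cosh\dist(z,w)=1+|z-w|^2/(2\Im z\,\Im w)$ together with the identity $R(x)^2=((x^2-1)/(2x))^2+1$ one finds that its minimum distance to $Ti$ equals the unique $r\geq 0$ with
\[
\cosh r \;=\; \frac{\sqrt{(T^2-1)^2+4T^2 R(x)^2}}{2T\,R(x)}.
\]
This is $\leq\cosh r_0$ precisely when $R(x)\geq(T^2-1)/(2T\sinh r_0)$, a condition automatically satisfied by every $|x|\geq T\geq 1$ as soon as $\sinh r_0\geq 1$ (note $R(x)$ is increasing on $[1,\infty)$, with $R(T)=(T^2+1)/(2T)$). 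Since $g$ acts by isometries on $\HH\cup\partial_{\infty}\HH$ and $Ti=a_T\cdot i$, one has the invariance $g\cdot\scrO_i(Ti,r_0)=\scrO_{g\cdot i}(ga_T\cdot i,r_0)$ noted just before Lemma~\ref{shadlem}, so
\[
g\cdot\{|x|\geq T\}\;\subset\;\scrO_{g\cdot i}(ga_T\cdot i,\,r_0).
\]

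Finally, apply Lemma~\ref{shadlem} with $z=g\cdot i$, $w=ga_T\cdot i$ and $r=r_0$. Because $\dist(g\cdot i,ga_T\cdot i)=\dist(i,Ti)=\log T$ and $\sYGa(ga_T\cdot i)=\sYGa(\Gamma ga_T)$, we obtain
\[
\nu_{g\cdot i}^{(\Gamma,i)}\!\bigl(\scrO_{g\cdot i}(ga_T\cdot i,r_0)\bigr)\;\ll_\Gamma\; e^{2\delGa r_0}\,T^{-\delGa}\,\sYGa(\Gamma ga_T)^{1-\delGa}.
\]
Absorbing the fixed constant $e^{2\delGa r_0}$ into the implied constant and combining with the first step gives exactly the asserted bound. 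The only non-routine point is the geometric inclusion in the second paragraph; once it is established the rest is really just unwinding the definitions, so I do not anticipate any serious obstacle.
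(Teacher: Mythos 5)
Your proof is correct and follows essentially the same route as the paper: the paper's proof likewise conjugates via Lemma~\ref{conjlem}, observes that $\scrO_i(iT,\mathrm{arcsinh}(1))=\lbrace x\in\RR : |x|\geq T\rbrace\cup\lbrace\infty\rbrace$ (your $r_0$ with $\sinh r_0=1$), and then applies Lemma~\ref{shadlem}. The only difference is that you supply the hyperbolic-geometry computation behind the shadow inclusion, which the paper states without proof.
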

\begin{proof}
Observe that $\scrO_i( i T,\mathrm{arcsinh}(1))= \lbrace x\in\RR\,:\, |x|\geq T\rbrace \cup\lbrace \infty\rbrace$. By Lemmas \ref{conjlem} and \ref{shadlem}, we have \begin{align*}
\nu_i^{(g^{-1}\Gamma g,i)}\big( \lbrace x\in\RR&\,:\, |x|\geq T\rbrace\big)= \nu_i^{(g^{-1}\Gamma g,i)}\big(\scrO_i( i T,\mathrm{arcsinh}(1))\big)
\\&=(g^{-1}_*\nu_{g\cdot i}^{(\Gamma,g\cdot i)}\big(\scrO_i( i T,\mathrm{arcsinh}(1))\big)=\scrP_{\Gamma}(\Gamma g)\nu_{g\cdot i}^{(\Gamma,i)}\big(g\cdot\scrO_i(i T,\mathrm{arcsinh}(1))\big)
\\&=\scrP_{\Gamma}(\Gamma g)\nu_{g\cdot i}^{(\Gamma,i)}\big(\scrO_{g\cdot i}(g\cdot i T,\mathrm{arcsinh}(1))\big)
\\&\ll_{\Gamma} \scrP_{\Gamma}(\Gamma g)e^{-\delGa\mathrm{dist}(g\cdot i, g\cdot iT)} \sYGa(g\cdot iT)^{1-\delGa}
\\&=\scrP_{\Gamma}(\Gamma g)e^{-\delGa\mathrm{dist}(i,  iT)} \sYGa(\Gamma g a_{T})^{1-\delGa}.
\end{align*}
The proof is completed by noting that $e^{-\delGa\mathrm{dist}( i,  iT)}= T^{-\delGa}$.
\end{proof}

The following proposition gives a bound on the $\nu$-measures of certain subsets of $\RR$:
\begin{lem}\label{diffmeas}
\begin{align*}
&\nu_i^{(g^{-1}\Gamma g,i)}\big(\lbrace u\in\RR\,:\,(1-\epsilon)T\leq |u|\leq (1+\epsilon)T\rbrace \big)\ll_{\Gamma} \scrP_{\Gamma}(\Gamma g)\epsilon^{2\delGa-1} T^{-\delGa}\sYGa(\Gamma g a_{T})^{1-\delGa}
\end{align*}
for all $g\in G$, $T\geq 2$, $0\leq\epsilon\leq\frac{1}{2}$. 
\end{lem}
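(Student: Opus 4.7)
I would follow the reduction-cover-shadow-lemma strategy used in the proof of Lemma \ref{Tinftymeasure}. Write $A := \{u \in \RR : (1-\epsilon)T \leq |u| \leq (1+\epsilon)T\}$. By Lemma \ref{conjlem} combined with \eqref{nuscaling},
\[
\nu_i^{(g^{-1}\Gamma g, i)}(A) = \scrP_{\Gamma}(\Gamma g)\, \nu_{g\cdot i}^{(\Gamma, i)}(g\cdot A),
\]
so it suffices to bound the second factor by $\epsilon^{2\delGa - 1}T^{-\delGa}\sYGa(\Gamma g a_T)^{1-\delGa}$.

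The key step is to cover $g \cdot A$ by two shadows $\scrO_{g\cdot i}(g \cdot w^{\pm}, r_0)$ for an absolute constant $r_0$, where $w^{\pm} := \pm T + iT\epsilon \in \HH$. Applying $g^{-1}$ reduces this to checking that $A \subset \scrO_i(w^+, r_0) \cup \scrO_i(w^-, r_0)$, which I would verify by a direct computation in $\HH$. Writing the geodesic from $i$ to $u>0$ as a semicircle with center $c = (u-1/u)/2$ and radius $R = \sqrt{c^2+1}$, at height $y=T\epsilon$ the descending branch passes through $x = u - (T\epsilon)^2/u + O(T\epsilon^4/u^3)$. For $u \in [(1-\epsilon)T,(1+\epsilon)T]$ this gives Euclidean distance $O(T\epsilon)$ to $w^+$, and since both points lie at height $T\epsilon$, the hyperbolic distance is bounded by an absolute constant (indeed $r_0 = 2$ suffices). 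The positive and negative parts of $A$ are thus each contained in a single shadow.

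With the covering in place, I apply Lemma \ref{shadlem} to each shadow. The formula $\cosh\dist(i, w) = 1 + |w-i|^2/(2\,\Im w)$ gives $\dist(g\cdot i, g\cdot w^{\pm}) = \dist(i, w^{\pm}) = \log(T/\epsilon) + O(1)$. From the decomposition $w^{\pm} = a_T n_{\pm 1} a_{\epsilon} \cdot i$ together with Proposition \ref{Yprops} parts (1)--(2),
\[
\sYGa(\Gamma g w^{\pm}) = \sYGa(\Gamma g a_T n_{\pm 1} a_{\epsilon}) \leq 4\,\sYGa(\Gamma g a_T)\,\epsilon^{-1}
\]
(using $\epsilon \leq 1/2$). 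Substituting into Lemma \ref{shadlem} yields, for each shadow,
\[
\nu_{g\cdot i}^{(\Gamma, i)}\bigl(\scrO_{g \cdot i}(g\cdot w^{\pm}, r_0)\bigr) \ll_{\Gamma} (\epsilon/T)^{\delGa}\,\epsilon^{-(1-\delGa)}\,\sYGa(\Gamma g a_T)^{1-\delGa} = \epsilon^{2\delGa - 1}\,T^{-\delGa}\,\sYGa(\Gamma g a_T)^{1-\delGa}.
\]
Summing the two contributions and multiplying by $\scrP_{\Gamma}(\Gamma g)$ gives the claim.

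The principal obstacle is the covering step: the height $T\epsilon$ of the ball centers is forced by the need to balance the decay factor $(\epsilon/T)^{\delGa}$ against the cuspidal-growth factor $\epsilon^{-(1-\delGa)}$ in $\sYGa(w^{\pm})$, and it is exactly this balance that produces the exponent $2\delGa - 1 = \delGa - (1-\delGa)$. Using more than $O(1)$ shadows, or a different height, would either weaken the bound in $\epsilon$ or force an unnecessary dependence on the shape of $\Gamma g a_T$ beyond $\sYGa(\Gamma g a_T)$.
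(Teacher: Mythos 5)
Your proposal is correct and follows essentially the same strategy as the paper's proof: both use the conjugation identity to pass to $\nu_{g\cdot i}^{(\Gamma,i)}$, cover the annulus by $O(1)$ shadows about the points $\pm T + iT\epsilon$, and apply Lemma \ref{shadlem} together with the bounds $\sYGa(\Gamma g a_T n_{\pm1} a_\epsilon)\ll \epsilon^{-1}\sYGa(\Gamma g a_T)$ from Proposition \ref{Yprops} (1)--(2) and $e^{-\delGa\dist(i,T+iT\epsilon)}\leq(\epsilon/T)^{\delGa}$. The paper handles the negative half of the annulus by symmetry rather than a second explicit shadow and computes a slightly more generous covering radius $\mathrm{arcsinh}(5)$ via the geodesic-ray inequalities, but these are cosmetic differences.
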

\begin{proof}
We prove the bound for the interval $[(1-\epsilon)T,(1+\epsilon)T]$; the negative interval is dealt with in a completely symmetric manner.  Given $r>0$ such that 
\begin{equation}\label{shadcontain}
[(1-\epsilon)T,(1+\epsilon)T]\subset \scrO_i(T+i \epsilon T,r),
\end{equation}
by Lemmas \ref{conjlem} and \ref{shadlem}, we then have
\begin{align*}
\nu_i^{(g^{-1}\Gamma g,i)}\big([(1-\epsilon)T,&(1+\epsilon)T]\big)\leq \nu_i^{(g^{-1}\Gamma g,i)}\big(\scrO_i(T+i \epsilon T,r)\big)=(g^{-1})_*\nu_{g\cdot i}^{(\Gamma,g\cdot i)}\big(\scrO_i(T+i \epsilon T,r)\big)
\\&=\scrP(\Gamma g)\nu_{g\cdot i}^{(\Gamma,i)}\big(g\cdot \scrO_i(T+i \epsilon T,r)\big)=\scrP(\Gamma g)\nu_{g\cdot i}^{(\Gamma,i)}\big( \scrO_{g\cdot i}(g\cdot(T+i \epsilon T),r)\big)
\\&\ll_{\Gamma}\scrP_{\Gamma}(\Gamma g)e^{2\delGa r-\delGa\mathrm{dist}(g\cdot i,g\cdot(T+i \epsilon T))} \sYGa\big(g\cdot( T+i \epsilon T)\big)^{1-\delGa}
\\&=\scrP_{\Gamma}(\Gamma g)e^{2\delGa r-\delGa\mathrm{dist}( i,T+i \epsilon T)} \sYGa\big(\Gamma gn_T a_{\epsilon T}\big)^{1-\delGa}
\\&=\scrP_{\Gamma}(\Gamma g)e^{2\delGa r-\delGa\mathrm{dist}( i,T+i \epsilon T)} \sYGa\big(\Gamma ga_Tn_1 a_{\epsilon}\big)^{1-\delGa}.
\end{align*}
By \textit{(1)} and \textit{(2)} of Proposition \ref{Yprops}, $\sYGa\big(\Gamma ga_Tn_1 a_{\epsilon}\big)^{1-\delGa}\ll \epsilon^{\delGa-1}\sYGa(\Gamma ga_T)^{1-\delGa}$. Furthermore,
\begin{align*}
e^{-\delGa\mathrm{dist}( i,T+i \epsilon T)}=\left( \frac{\sqrt{T^2+(\epsilon T-1)^2}+\sqrt{T^2+(\epsilon T+1)^2}}{2\sqrt{\epsilon T}}\right)^{-2\delGa}\leq \left( \frac{T}{\sqrt{\epsilon T}}\right)^{-2\delGa}=\epsilon^{\delGa} T^{-\delGa}.
\end{align*}
We thus have
\begin{equation*}
\nu_i^{(g^{-1}\Gamma g)}\big([(1-\epsilon)T,(1+\epsilon)T]\big)\ll_{\Gamma} e^{2\delGa r}T^{-\delGa}\epsilon^{2\delGa-1}\sYGa(\Gamma ga_T)^{1-\delGa}.
\end{equation*}
In order to complete the proof, we need to find an $r>0$ satisfying \eqref{shadcontain}. Observe that $B_r(T+i\epsilon T)$ is a Euclidean ball centred at $T+i\cosh(r)\epsilon T$ with radius $\sinh(r)\epsilon T$. The points on the geodesic rays from $i$ to $(1\pm\epsilon)T$ are given by
\begin{equation*}
\scrG_{T,\epsilon}^{\pm}\left\lbrace z\in\HH\,:\, \left|z-\sfrac{T(1\pm \epsilon)-\frac{1}{T(1\pm\epsilon)}}{2}\right| =\sfrac{T(1\pm \epsilon)+\frac{1}{T(1\pm\epsilon)}}{2}\right\rbrace,
\end{equation*}
respectively. If $\scrG_{T,\epsilon}^{\pm}$ have non-empty intersections with $B_r(T+i \epsilon T)$, then $[T(1-\epsilon),T(1+\epsilon)]$ is contained in $\scrO_i(T+i\epsilon T, r)$, i.e.\ if the following two inequalities are satisfied:
\begin{align*}
&\left|T+i \epsilon T\cosh(r)-\sfrac{T(1- \epsilon)-\frac{1}{T(1-\epsilon)}}{2}\right| <\epsilon T\sinh(r)+\sfrac{T(1- \epsilon)+\frac{1}{T(1-\epsilon)}}{2},
\\
&\left|T+i \epsilon T\cosh(r)-\sfrac{T(1+ \epsilon)-\frac{1}{T(1+\epsilon)}}{2}\right| +\epsilon T\sinh(r)>\sfrac{T(1+ \epsilon)+\frac{1}{T(1+\epsilon)}}{2}.
\end{align*}
These inequalities are fulfilled if
\begin{equation*}
1\mp\frac{2\epsilon(1\pm\epsilon)}{1+T^2(1\pm\epsilon)^2}<\sinh(r),
\end{equation*}
so taking $r=\mathrm{arcsinh}(5)$ suffices for all relevant $T$ and $\epsilon$.
\end{proof}

Since we normalize the integral over $B_T$ in Theorem \ref{mainthm} by $\frac{1}{\mPSGgN(B_T)}$, we will require a \emph{lower} bound on $\mPSGgN(B_T)$.

We first introduce some more notation: for $u\in \partial_{\infty}\HH$ and $t\geq 0$, let $h_t(u)$ be the point on the geodesic segment from $i$ tending to $u$ at distance $t$ from $i$. Let $\scrS(u,t)\subset\partial_{\infty}\HH$ denote the set of points whose orthogonal projection onto the geodesic from $i$ to $u$ lie between $h_t(u)$ and $u$. Observe that since $K=\mathrm{Stab}_G(i)$, we have $k\cdot h_t(u)=h_t(k\cdot u)$ and $k\cdot \scrS( u,t)=\scrS( k\cdot u,t)$ for all $k\in K$ and $u\in\partial_{\infty}\HH$.

\begin{thm}\label{Shadow} (cf.\ \cite[Theorem 2]{StratVel}, \cite[Theorem 3.2]{Schap})
There exist $0<c_0<c_1$ such that
\begin{equation*}
c_0 e^{-\delGa t}\sYGa(h_t(\eta))^{1-\delGa}\leq\nu_i^{(\Gamma,i)}\big(\scrS(\eta,t)\big)\leq c_1 e^{-\delGa t}\sYGa(h_t(\eta))^{1-\delGa}\qquad \forall t\geq 0,\, \eta\in \Lambda.
\end{equation*}
\end{thm}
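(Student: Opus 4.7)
My plan is to establish the two inequalities separately, the upper bound directly from Lemma~\ref{shadlem} and the lower bound by combining the structure of the limit set with the cusp geometry captured by Proposition~\ref{Yprops}. For the upper bound, the first step is to prove the geometric inclusion
\begin{equation*}
\scrS(\eta,t)\subset \scrO_i(h_t(\eta),r_0)
\end{equation*}
for a universal constant $r_0>0$. This is a standard thin-triangle argument in $\HH$: if the orthogonal projection of $u$ onto the geodesic from $i$ to $\eta$ lies past $h_t(\eta)$, then the geodesics $[i,u]$ and $[i,\eta]$ fellow-travel for hyperbolic time at least $t$, so $[i,u]$ must pass within a uniformly bounded distance of $h_t(\eta)$. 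Combined with Lemma~\ref{shadlem} applied at $w=h_t(\eta)$, and using $\dist(i,h_t(\eta))=t$, this gives the asserted upper bound with $c_1$ depending on $r_0$ and $\Gamma$.

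For the lower bound, I would split into two cases according to Proposition~\ref{Yprops}~\textit{(4)}. If $\sYGa(h_t(\eta))\asymp 1$ (the thick case), the bound reduces to $\nu_i^{(\Gamma,i)}(\scrS(\eta,t))\gg_\Gamma e^{-\delGa t}$. Since $\eta\in\Lambda$ and $h_t(\eta)$ stays in a compact part of $\GaH$, I expect to be able to find $\gamma\in\Gamma$ with $\dist(\gamma\cdot i,h_t(\eta))$ uniformly bounded; the shadow of a fixed small ball around $\gamma\cdot i$ is then contained in $\scrS(\eta,t)$ and carries $\nu_i^{(\Gamma,i)}$-mass $\gg_\Gamma e^{-\delGa t}$ via the defining weak limit \eqref{nudef} and the transformation rule \eqref{nuprops}. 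If instead $h_t(\eta)$ lies in the horoball $h_{\eta_0}^{-1}\scrH(\infty,1)$ attached to some pfp $\eta_0$ (the cuspidal case), I would translate a reference thick-part ball by the $\asymp\sYGa(h_t(\eta))$ elements of $\Stab_\Gamma(\eta_0)\cong\ZZ$ whose orbits cluster close to $\eta_0$ at the appropriate horocyclic scale; each such translate contributes a pairwise disjoint shadow in $\scrS(\eta,t)$ of mass $\asymp_\Gamma e^{-\delGa t}\sYGa(h_t(\eta))^{-\delGa}$, and summing gives the required factor $\sYGa(h_t(\eta))^{1-\delGa}$.

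The hard part will be the cuspidal case: the main work lies in verifying that the parabolic translates of the reference ball do produce pairwise disjoint shadows and that these shadows all lie inside $\scrS(\eta,t)$, so that the parabolic orbit count $\asymp\sYGa(h_t(\eta))$ cleanly multiplies the per-shadow mass $\asymp e^{-\delGa t}\sYGa(h_t(\eta))^{-\delGa}$ to yield the claimed exponent $1-\delGa$ on $\sYGa(h_t(\eta))$. This is essentially the refined shadow lemma of Stratmann--Velani~\cite{StratVel} (see also Schapira~\cite[Theorem 3.2]{Schap}), and uses in a crucial way that every cusp of $\GaH$ has full rank in dimension $2$, the same geometric fact already exploited in the proof of Proposition~\ref{Yprops}~\textit{(4)}.
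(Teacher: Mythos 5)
The paper does not actually prove Theorem~\ref{Shadow}. The remark that immediately follows it says that the displayed two-sided bound is the global shadow lemma of Stratmann--Velani \cite[Theorem 2]{StratVel} (equivalently Schapira \cite[Theorem 3.2]{Schap}), with the cuspidal-excursion factor rewritten in terms of the invariant height function by means of Proposition~\ref{Yprops}~\textit{(5)}, which gives $\sYGa(h_t(\eta))\asymp e^{\mathsf{dist}_{\GaG}(h_t(\eta),\Gamma e)}$ once $h_t(\eta)$ enters a cuspidal horoball. That is the full extent of the paper's argument.

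Your proposal takes a genuinely different route: you set out to re-derive the global shadow lemma from the paper's Lemma~\ref{shadlem} and the cusp structure, rather than quoting it. Your upper bound is correct as sketched: the inclusion $\scrS(\eta,t)\subset\scrO_i(h_t(\eta),r_0)$ for a universal $r_0$ follows from thin triangles in $\HH$ (if the projection of $\xi$ to $[i,\eta)$ lies past $h_t(\eta)$, the rays $[i,\xi)$ and $[i,\eta)$ fellow-travel past time $t$, so $[i,\xi)$ passes within bounded distance of $h_t(\eta)$), and Lemma~\ref{shadlem} applied at $w=h_t(\eta)$ with $\dist(i,w)=t$ then gives the claimed upper bound. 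For the lower bound you are, as you acknowledge, essentially committing to reproducing the Stratmann--Velani proof. Two points to watch there: (i) to get the containment $\scrO_i(\gamma\cdot z_0,r)\subset\scrS(\eta,t)$ you should place $\gamma\cdot z_0$ near $h_{t+C}(\eta)$ for a constant $C$ somewhat larger than $r$, not merely near $h_t(\eta)$, since otherwise the small shadow can protrude outside $\scrS(\eta,t)$; and (ii) the pairwise disjointness of the shadows attached to the $\asymp\sYGa(h_t(\eta))$ parabolic translates is precisely where the full-rank cusp hypothesis is used to produce the exponent $1-\delGa$, and this is the delicate half of the Stratmann--Velani argument. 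Both of those steps are carried out in the references. Your route would make the section self-contained, at the cost of substantial extra work; the paper's route is to import the result and only supply the notational dictionary via Proposition~\ref{Yprops}~\textit{(5)}, which is considerably more economical.
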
 
\begin{remark}
Here we have simply used Proposition \ref{Yprops} \textit{(5)} to simply express the results from \cite{Schap,StratVel} using the invariant height function.
\end{remark}
\begin{prop}\label{muPSbd}
There exist continuous functions $C_{\Gamma},\,D_{\Gamma}\,:\, G\rightarrow \RR_{>0}$ such that
\begin{equation*}
\mu_{\Gamma g N}^{\mathrm{PS}}(B_T)\geq C_{\Gamma}(g) T^{\delGa} \sYGa(\Gamma a_T)^{1-\delGa}
\end{equation*}
for all $g\in G_{\mathrm{rad}}$ and $T\geq D_{\Gamma}(g)$.
\end{prop}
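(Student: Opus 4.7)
The plan is to use the scaling relation of Lemma \ref{mPSscaling} to reduce to the case of unit intervals, and then to bound the resulting Patterson--Sullivan integral directly by concentrating on an annulus around the cusp containing $h\cdot i$. By Lemma \ref{mPSscaling}, setting $h = ga_T$, the proposition is equivalent to
\[
\mu^{\mathrm{PS}}_{\Gamma h N}(B_1) \;\geq\; C_\Gamma(g)\,\sYGa(\Gamma h)^{1-\delGa}.
\]
Taking basepoint $z = h\cdot i$ in the definition of $\mu^{\mathrm{PS}}_{\Gamma h N}$ and invoking the $G$-invariance of the Busemann cocycle (so that $\beta_{h\cdot x}(h\cdot i, h\cdot(i+x))=\log(1+x^2)$), one obtains
\[
\mu^{\mathrm{PS}}_{\Gamma h N}(B_1) \;\geq\; \nu_{h\cdot i}^{(\Gamma,i)}(h\cdot[-1,1]) \;=\; \int_{h\cdot[-1,1]} e^{-\delGa\,\beta_u(h\cdot i,\,i)}\,d\nu_i^{(\Gamma,i)}(u),
\]
and the task becomes to lower--bound this integral.

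In the cuspidal case $\sYGa(\Gamma h)>1$ (which forces $\delGa>\tfrac12$), I would, after a $\Gamma$-translation and conjugating $\Gamma$ by $h_\eta\in\scrN_\eta^{(\Gamma)}$ for the unique pfp $\eta$ whose horoball contains $h\cdot i$, reduce to $\eta=\infty$, $h\cdot i=x_0+iY$ with $|x_0|\leq\tfrac12$ and $Y:=\sYGa(\Gamma h)$. A direct computation gives
\[
\beta_u(x_0+iY,\,i) \;=\; \log\frac{(u-x_0)^2+Y^2}{Y(u^2+1)},
\]
so on the annulus $A_Y:=\{u\in\RR : |u-x_0|\in[Y/K,Y]\}$ one has $e^{-\delGa\beta_u(h\cdot i,i)}\gtrsim_K Y^{\delGa}$. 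Applying Theorem \ref{Shadow} at $\eta=\infty$ (where $\sYGa(\Gamma a_T)=T$, and $\scrS(\infty,t)=\{|u|\geq e^t\}$) yields $\nu_i^{(\Gamma,i)}(\{|u|\geq T\})\asymp T^{1-2\delGa}$, and writing $\nu_i^{(\Gamma,i)}(A_Y)$ as a difference of tail measures,
\[
\nu_i^{(\Gamma,i)}(A_Y) \;\geq\; \bigl(c_0\,K^{2\delGa-1}-c_1\bigr)\,Y^{1-2\delGa},
\]
which is $\gtrsim Y^{1-2\delGa}$ once $K$ is fixed large enough (possible since $\delGa>\tfrac12$). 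Since $h\cdot[-1,1]=g\cdot[-T,T]$ approaches $\partial_\infty\HH\setminus\{[g]^+\}$ as $T\to\infty$ and $\infty=\eta\neq[g]^+$ (parabolic vs.\ radial), the annulus $A_Y$ is contained in $h\cdot[-1,1]$ for $T\gg_{\Gamma,g}1$, whence
\[
\mu^{\mathrm{PS}}_{\Gamma h N}(B_1) \;\geq\; \int_{A_Y}e^{-\delGa\,\beta_u(h\cdot i,i)}\,d\nu_i^{(\Gamma,i)}(u) \;\gtrsim\; Y^{\delGa}\cdot Y^{1-2\delGa} \;=\; \sYGa(\Gamma h)^{1-\delGa}.
\]
The complementary non-cuspidal case $\sYGa(\Gamma h)=1$ is settled by compactness and continuity: since $\Gamma g\in\GaG_{\mathrm{rad}}$, the geodesic $\{\Gamma g a_T\}_{T\geq 1}$ returns infinitely often to a compact subset of $\GaG$, on which $\mu^{\mathrm{PS}}_{\Gamma h N}(B_1)$ depends continuously on $h$ and is strictly positive (by non-elementarity of $\Gamma$), hence admits a uniform positive lower bound matching $\sYGa(\Gamma h)^{1-\delGa}=1$.

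The principal technical obstacle is the sharp lower bound $\nu_i^{(\Gamma,i)}(A_Y)\gtrsim Y^{1-2\delGa}$: as a difference of two tail measures of the same order, positivity of the leading constant $c_0 K^{2\delGa-1}-c_1$ requires a careful choice of $K$ in terms of the implicit constants of Theorem \ref{Shadow}. The continuous dependence of $C_\Gamma(g)$ and $D_\Gamma(g)$ on $g$ then follows by tracking all constants through the reduction, noting that $[g]^+$ and the normalizing element $h_\eta$ (drawn from a finite list of $\Gamma$-orbits of pfps) vary continuously with $g$, while $K$ may be fixed uniformly once $\delGa$ is.
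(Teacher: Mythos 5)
Your reduction via Lemma \ref{mPSscaling} to bounding $\mu^{\mathrm{PS}}_{\Gamma h N}(B_1)$ (with $h=ga_T$) and the lower bound $\mu^{\mathrm{PS}}_{\Gamma h N}(B_1)\geq\nu^{(\Gamma,i)}_{h\cdot i}(h\cdot[-1,1])$ are both correct, but from that point on your strategy genuinely diverges from the paper's: the paper's proof applies the uniform shadow lemma (Theorem \ref{Shadow}) with base point $\eta=[g]^+$ (the \emph{radial} endpoint), restricting to the set $\{T/R\leq|u|\leq T\}$ which is trivially contained in $[-T,T]$, whereas you conjugate so that the relevant \emph{parabolic} fixed point $\eta$ sits at $\infty$ and apply the shadow lemma there. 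The gap is in your claimed containment $A_Y\subset h\cdot[-1,1]$: you justify this by noting that $h\cdot[-1,1]=g\cdot[-T,T]$ converges to $\partial_\infty\HH\setminus\{[g]^+\}$ and that $[g]^+\neq\eta$, but this argument treats $A_Y$ as a fixed compact set away from $[g]^+$, which it is not --- $A_Y$ is centered at $x_0=\Re(\sigma ga_T\cdot i)$ with radii $\asymp Y=\sYGa(\Gamma g a_T)$, and \emph{all} of $x_0$, $Y$ and the conjugating element $\sigma=h_\eta\gamma$ depend on $T$.

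Concretely, the containment fails on the ``descending'' half of every cuspidal excursion. Writing $b=\sigma[g]^+$ and parametrizing the geodesic semicircle from $\sigma[g]^-$ to $b$ (both finite after conjugation), one finds $|b-x_0|=Y/T$, while the two endpoints of $\sigma g\cdot[-T,T]$ lie at distances $Y\frac{T-1}{T+1}$ and $Y\frac{T+1}{T-1}$ from $x_0$ on either side. For $T$ past the peak of the excursion (so $T\gg 1$ but $Y>1$) the complement arc $\sigma g\cdot(\partial_\infty\HH\setminus[-T,T])$ is an interval of width $\asymp Y$ centered essentially at $x_0$, which swallows almost all of $A_Y$: only a sliver of $\nu$-measure survives, and the shadow-lemma lower bound $\nu(A_Y)\gtrsim Y^{1-2\delGa}$ gives no control over this sliver (the Patterson measure could well be concentrated on the lost piece). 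Since the radial geodesic makes such excursions for arbitrarily large $T$, the problem cannot be fixed by enlarging $D_\Gamma(g)$. The paper's choice of base point for the shadow lemma is precisely what sidesteps this: the excised arc around $[g]^+$ never interferes with an annulus whose shadow is \emph{also} anchored at $[g]^+$. To repair your proof along the present lines you would need to track the position of $\sigma[g]^+$ relative to $A_Y$ and argue that a uniformly positive fraction of $\nu(A_Y)$ always survives the truncation --- this is nontrivial and essentially re-derives the structure of the paper's argument.
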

\begin{proof}
Using Lemma \ref{conjBT}, we have
\begin{align*}
\mu_{\Gamma g N}^{\mathrm{PS}}(B_T)=\frac{\mu_{ (g^{-1} \Gamma g) e N}^{\mathrm{PS}}(B_T)}{\scrP_{\Gamma}(\Gamma g)}.
\end{align*}
Now, $\infty\in\Lambda_{\mathrm{rad}}(g^{-1}\Gamma g)$, so
\begin{align*}
\mu_{ (g^{-1} \Gamma g) e N}^{\mathrm{PS}}(B_T)=\int_{-T}^{T} (1+u^2)^{\delGa}\,d\nu_{i}^{(g^{-1} \Gamma g,i)}( u).
\end{align*}
We now choose some $R\geq 2$ (depending on $\Gamma g$ and later to be specified further), and note that by \eqref{Ninftmeas}
\begin{align}\label{muPSbTbdd}
\mu_{ (g^{-1} \Gamma g) e N}^{\mathrm{PS}}&(B_T)\geq \int_{\lbrace u\,:\, \frac{T}{R}\leq |u| \leq T\rbrace} (1+u^2)^{\delGa}\,d\nu_{i}^{(g^{-1} \Gamma g,i)}( u)
\\\notag\geq& \left(1+\left(\sfrac{T}{R}\right)^2\right)^{\delGa}\int_{\lbrace u\,:\, \frac{T}{R}\leq |u| \leq T\rbrace} \,d\nu_{i}^{(g^{-1} \Gamma g,i)}( u)
\\\notag=&\left(1+\left(\sfrac{T}{R}\right)^2\right)^{\delGa}\left(  \nu_{i}^{(g^{-1} \Gamma g,i)}(\lbrace u\in\RR\,:\, |u| \geq \sfrac{T}{R}\rbrace)-\nu_{i}^{(g^{-1} \Gamma g,i)}(\lbrace u\in\RR\,:\, |u| \geq T\rbrace)\right).
\end{align}
Let $g=ka_yn_x$. Then by Lemma \ref{conjlem}, for any $S\geq 1$, we have
\begin{align}\label{Sset}
\nu_{i}^{(g^{-1} \Gamma g,i)}(\lbrace u\in\RR\,:\, |u| \geq S\rbrace)=&\nu_{g\cdot i}^{ (\Gamma,g\cdot i )}\big(g\cdot \lbrace u\in\RR\,:\, |u| \geq S\rbrace \big)
\\\notag&=\scrP_{\Gamma}(\Gamma g)\nu_{g\cdot i}^{ (\Gamma,i )}\big(k\cdot \lbrace y(x+u)\in\RR\,:\, |u| \geq S\rbrace \big).
\end{align}
Assuming 
\begin{equation}\label{Sassump}
y(x-S)\leq -1 < 1\leq y(x+S)
\end{equation}
(i.e. $|x|\leq S-\frac{1}{y}$), we let
\begin{equation*}
S_{-}:=\min\lbrace |y(x-S)|,|y(x+S)|\rbrace=y(S-|x|),\quad S_+:=\max\lbrace |y(x-S)|,|y(x+S)| \rbrace = y(S+|x|).
\end{equation*} 
We then have
\begin{equation*}
k\cdot \lbrace u\in\RR\,:\, |u| \geq S_+\rbrace\subset g\cdot \lbrace u\in\RR\,:\, |u| \geq S\rbrace\subset k\cdot \lbrace u\in\RR\,:\, |u| \geq S_-\rbrace.
\end{equation*}
Observe now that $k\cdot \infty= [g]^+\in \Lambda_{\mathrm{rad}}(\Gamma)$. Furthermore, $\lbrace u\in \RR\,:\, |u|\geq S\rbrace=\scrS_i(\infty,\log S)$ (for all $S\geq 1$), hence
\begin{equation*}
k\cdot\lbrace u\in\RR\,:\, |u| \geq S_{\pm}\rbrace=k\cdot\scrS_i(\infty,\log S_{\pm})=\scrS_{k\cdot i}(k\cdot\infty,\log S_{\pm})=\scrS_{ i}([g]^{+},\log S_{\pm}).
\end{equation*}
Returning to \eqref{Sset}, we now have
\begin{equation*}
\scrP_{\Gamma}(\Gamma g)\nu_{g\cdot i}^{ (\Gamma,i )}\big(\scrS_{ i}([g]^{+},\log S_{+})\big)\!\leq\!\nu_{i}^{(g^{-1} \Gamma g,i)}(\lbrace u\in\RR\,:\, |u| \geq S\rbrace)\!\leq\! \scrP_{\Gamma}(\Gamma g)\nu_{g\cdot i}^{ (\Gamma,i )}\big(\scrS_{ i}([g]^{+},\log S_{-}) \big),
\end{equation*}
and so Lemma \ref{nuzwbound} gives
\begin{align}\label{nuineqs}
&\scrP_{\Gamma}(\Gamma g)e^{-\delGa\mathrm{dist}(g\cdot i,i)}\nu_{ i}^{ (\Gamma,i )}\big(\scrS_{ i}([g]^{+},\log S_{+})\big)
\\\notag&\quad\leq\nu_{i}^{(g^{-1} \Gamma g,i)}(\lbrace u\in\RR\,:\, |u| \geq S\rbrace)
\\&\notag \qquad\leq  \scrP_{\Gamma}(\Gamma g)e^{\delGa\mathrm{dist}(g\cdot i,i)}\nu_{ i}^{ (\Gamma,i )}\big(\scrS_{ i}([g]^{+},\log S_{-}) \big).
\end{align}
Keeping the notation $g=k a_y n_x$, we assume that $\frac{T}{R}$ satisfies the conditions placed on the variable $S$ in \eqref{Sassump}. Note that $T$ then also fulfils these assumptions. Combining \eqref{nuineqs}, \eqref{Sset}, and \eqref{muPSbTbdd}, we have
\begin{align*}
\mu_{\Gamma g N}^{\mathrm{PS}}(B_T)\geq (1+(\sfrac{T}{R})^2)^{\delGa}\Bigg(& e^{-\delGa\mathrm{dist}(g\cdot i,i)}\nu_{ i}^{ (\Gamma,i )}\big(\scrS_{ i}([g]^{+},\log \lbrace \sfrac{T}{R}\rbrace_+)\big)
\\&\qquad\qquad- e^{\delGa\mathrm{dist}(g\cdot i,i)}\nu_{ i}^{ (\Gamma,i )}\big(\scrS_{ i}([g]^{+},\log \lbrace T\rbrace_-)\big)\Bigg),
\end{align*}
where
\begin{align*}
&\lbrace \sfrac{T}{R}\rbrace_+= y(\sfrac{T}{R}+|x|)
\\&\lbrace T\rbrace_-=y(T-|x|).
\end{align*}
Now let $0<c_0<c_1$ be the constants from Theorem \ref{Shadow}. Using both the upper and lower bounds from the same theorem, we obtain
\begin{align}\label{lowbdd1}
\mu_{\Gamma g N}^{\mathrm{PS}}(B_T)\geq (1+(\sfrac{T}{R})^2)^{\delGa}\Bigg(& c_0 e^{-\delGa\mathrm{dist}(g\cdot i,i)}\left(\lbrace \sfrac{T}{R}\rbrace_+\right)^{-\delGa}\sYGa\big(h_{\log\lbrace \sfrac{T}{R}\rbrace_+}([g]^+ )\big)^{1-\delGa}\\\notag &-c_1 e^{\delGa\mathrm{dist}(g\cdot i,i)}\left(\lbrace T\rbrace_-\right)^{-\delGa}\sYGa\big(h_{\log\lbrace T\rbrace_-}([g]^+ )\big)^{1-\delGa}\Bigg).
\end{align}
Since $[g]^+=k\cdot \infty$, $h_t([g]^+)= ka_{e^t}\cdot i$, and so
\begin{align*}
&h_{\log\lbrace \sfrac{T}{R}\rbrace_+}([g]^+ )= ka_{\lbrace \sfrac{T}{R}\rbrace_+}\cdot i=g a_T \big(a_y n_x a_T \big)^{-1}a_{\lbrace \sfrac{T}{R}\rbrace_+}\cdot i=g a_T n_{-\frac{x}{T}}a_{\lbrace \sfrac{T}{R}\rbrace_+/(yT)}\cdot i
\\&h_{\log\lbrace T \rbrace_-}([g]^+ )= ka_{\lbrace T\rbrace_-}\cdot i=g a_T \big(a_y n_x a_T \big)^{-1}a_{\lbrace T\rbrace_-}\cdot i=g a_T n_{-\frac{x}{T}}a_{\lbrace T\rbrace_-/(yT)}\cdot i.
\end{align*}
By Proposition \ref{Yprops} \textit{(1)} and \textit{(2)}, for all $Y>0$,
\begin{equation*}
\frac{\sYGa(\Gamma g a_T)}{(1+\sfrac{|x|}{T})^2\max\lbrace \sfrac{Y}{yT},\sfrac{yT}{Y}\rbrace}\leq\sYGa\big(g a_T n_{-\frac{x}{T}}a_{Y/(yT)}\cdot i\big)\leq \sYGa(\Gamma g a_T)(1+\sfrac{|x|}{T})^2\max\lbrace \sfrac{Y}{yT},\sfrac{yT}{Y}\rbrace.
\end{equation*}
In particular,
\begin{align*}
\sYGa\big(h_{\log\lbrace \sfrac{T}{R}\rbrace_+}([g]^+ )\big)\geq \frac{\sYGa(\Gamma g a_T)}{(1+\sfrac{|x|}{T})^2\max\left\lbrace \sfrac{\lbrace \sfrac{T}{R}\rbrace_+}{yT},\sfrac{yT}{\lbrace \sfrac{T}{R}\rbrace_+}\right\rbrace},
\end{align*}
and
\begin{align*}
\sYGa\big(h_{\log\lbrace T\rbrace_-}([g]^+ )\big) \leq  \sYGa(\Gamma g a_T)(1+\sfrac{|x|}{T})^2\max\left\lbrace \sfrac{\lbrace T\rbrace_-}{yT},\sfrac{yT}{\lbrace T\rbrace_-}\right\rbrace.
\end{align*}
Using these bounds in \eqref{lowbdd1}, we have
\begin{equation*}
\mu_{\Gamma g N}^{\mathrm{PS}}(B_T)\geq T^{\delGa}\sYGa( \Gamma g a_T)^{1-\delGa}\times (\ast),
\end{equation*} 
where ``$(\ast)$" equals
\begin{align}\label{Tbdd}
\notag\frac{T^{\delGa}}{R^{2\delGa}}\Bigg(&\frac{c_0 e^{-\delGa\mathrm{dist}(g\cdot i,i)}\left(\lbrace \sfrac{T}{R}\rbrace_+\right)^{-\delGa}}{(1+\sfrac{|x|}{T})^{2-2\delGa}\max\left\lbrace \sfrac{\lbrace \sfrac{T}{R}\rbrace_+}{yT},\sfrac{yT}{\lbrace \sfrac{T}{R}\rbrace_+}\right\rbrace^{1-\delGa}}
\\\notag&\quad-c_1 e^{\delGa\mathrm{dist}(g\cdot i,i)}\left(\lbrace T\rbrace_-\right)^{-\delGa}(1+\sfrac{|x|}{T})^{2-2\delGa}\max\left\lbrace \sfrac{\lbrace T\rbrace_-}{yT},\sfrac{yT}{\lbrace T\rbrace_-}\right\rbrace^{1-\delGa} \Bigg)
\\&=\frac{e^{\delGa\mathrm{dist}(g\cdot i,i)}}{R^{2\delGa}}\Bigg(\frac{c_0 e^{-2\delGa\mathrm{dist}(g\cdot i,i)}\left( \frac{T}{\lbrace\sfrac{T}{R}\rbrace_+}\right)^{\delGa}}{(1+\sfrac{|x|}{T})^{2-2\delGa}\max\left\lbrace \sfrac{\lbrace \sfrac{T}{R}\rbrace_+}{yT},\sfrac{yT}{\lbrace \sfrac{T}{R}\rbrace_+}\right\rbrace^{1-\delGa}}
\\\notag&\qquad\quad-c_1 \left(\sfrac{T}{\lbrace T\rbrace_-}\right)^{\delGa}(1+\sfrac{|x|}{T})^{2-2\delGa}\max\left\lbrace \sfrac{\lbrace T\rbrace_-}{yT},\sfrac{yT}{\lbrace T\rbrace_-}\right\rbrace^{1-\delGa} \Bigg).
\end{align}
Since $T$ and $\frac{T}{R}$ both satisfy \eqref{Sassump}, we have
\begin{equation*}
\frac{1}{y}\leq \frac{T}{R}\leq T, \qquad |x|\leq \frac{T}{R}-\frac{1}{y}\leq T-\frac{1}{y},
\end{equation*} 
and hence
\begin{align*}
&1+\frac{|x|}{T}\leq 2,
\\& \frac{T}{\lbrace\sfrac{T}{R}\rbrace_+}=\frac{T}{y(\frac{T}{R}+|x|)}\geq \frac{T}{y(\frac{2T}{R}-\frac{1}{y})}\geq \frac{R}{y},
\\&\sfrac{T}{\lbrace T\rbrace_-}=\frac{T}{y(T-|x|)}=\frac{T}{y(\frac{T}{R}-|x| + T-\frac{T}{R})}\leq \frac{T}{1+yT(1-\frac{1}{R})}\leq \frac{R}{y(R-1)}\leq \frac{2}{y},
\\&\max\left\lbrace \sfrac{\lbrace \sfrac{T}{R}\rbrace_+}{yT},\sfrac{yT}{\lbrace \sfrac{T}{R}\rbrace_+}\right\rbrace=\max \left \lbrace \sfrac{1}{R}+\sfrac{|x|}{T} ,\frac{1}{\frac{1}{R}+\frac{|x|}{T}}\right\rbrace \leq \max \left\lbrace \sfrac{2}{R}, R\right\rbrace=R,
\\&\max\left\lbrace \sfrac{\lbrace T\rbrace_-}{yT},\sfrac{yT}{\lbrace T\rbrace_-}\right\rbrace=\max\left\lbrace 1-\sfrac{|x|}{T},y\times \sfrac{T}{\lbrace T\rbrace_-}\right\rbrace \leq \max\left\lbrace 1, y\times  \frac{R}{y(R-1)} \right\rbrace =\frac{R}{R-1}\leq 2.
\end{align*}
Entering these bounds into \eqref{Tbdd} yields
\begin{align*}
\mu_{\Gamma g N}^{\mathrm{PS}}(B_T)&\geq T^{\delGa}\sYGa( \Gamma g a_T)^{1-\delGa} \frac{e^{\delGa\mathrm{dist}(g\cdot i,i)}}{R^{2\delGa}}\Bigg(\frac{c_0 e^{-2\delGa\mathrm{dist}(g\cdot i,i)}\left( \frac{R}{y}\right)^{\delGa}}{2^{2-2\delGa}R^{1-\delGa}}-c_1 \left( \frac{2}{y}\right)^{\delGa}2^{2-2\delGa}2^{1-\delGa} \Bigg)\\=&  T^{\delGa}\sYGa( \Gamma g a_T)^{1-\delGa} \frac{e^{\delGa\mathrm{dist}(g\cdot i,i)}}{y^{\delGa}R^{2\delGa}}\Bigg(2^{2\delGa-2}c_0 e^{-2\delGa\mathrm{dist}(g\cdot i,i)}R^{2\delGa-1}-2^{3-2\delGa} c_1  \Bigg).
\end{align*}
Since $2\delGa-1>0$, there exists $R_0\geq 0$ such that
\begin{equation*}
2^{2\delGa-2}c_0 e^{-2\delGa\mathrm{dist}(g\cdot i,i)}R_0^{2\delGa-1}-2^{3-2\delGa} c_1 =2^{3-2\delGa} c_1,
\end{equation*}
so choosing $R= 2+ R_0$ gives
\begin{equation*}
\mu_{\Gamma g N}^{\mathrm{PS}}(B_T)\geq  T^{\delGa}\sYGa( \Gamma g a_T)^{1-\delGa} \left( \frac{e^{\delGa\mathrm{dist}(g\cdot i,i)}2^{3-2\delGa} c_1}{y^{\delGa}(2+R_0)^{2\delGa}}\right).
\end{equation*}
Observe that $e^{\delGa\mathrm{dist}(g\cdot i , i)}\geq y^{\delGa}$, hence
\begin{align*}
\left( \frac{e^{\delGa\mathrm{dist}(g\cdot i,i)}2^{3-2\delGa} c_1}{y^{\delGa}(2+R_0)^{2\delGa}}\right)\gg_{\Gamma}&\frac{1}{(2+R_0)^{2\delGa}}\\&\gg\frac{1}{\left( 2+ \left( \frac{2^{6-4\delGa}c_1}{c_0}\right)^{\frac{1}{2\delGa-1}}e^{\frac{2\delGa}{2\delGa-1}\mathrm{dist}(g\cdot i ,i)}\right)^{2\delGa}}\gg_{\Gamma} e^{-\frac{4\delGa^{2}}{2\delGa-1}\mathrm{dist}(g\cdot i,i)},
\end{align*}
giving
\begin{equation*}
\mu_{\Gamma g N}^{\mathrm{PS}}(B_T)\gg_{\Gamma} e^{-\frac{4\delGa^{2}}{2\delGa-1}\mathrm{dist}(g\cdot i,i)}  T^{\delGa}\sYGa( \Gamma g a_T)^{1-\delGa}.
\end{equation*}
This bound is proved under the assumption $\frac{T}{R}\geq |x|+\frac{1}{y}$ (cf.\ \ref{Sassump}), i.e. 
\begin{equation*}
T\geq   (|x|+\sfrac{1}{y})\left( 2+ \left( \frac{2^{6-4\delGa}c_1}{c_0}\right)^{\frac{1}{2\delGa-1}}e^{\frac{2\delGa}{2\delGa-1}\mathrm{dist}(g\cdot i ,i)}\right).
\end{equation*}
\end{proof}
\begin{cor}\label{BTCOMP}
Let $\Omega\subset \GaG$ be compact. Then
\begin{equation*}
\mu_{\Gamma g N}^{\mathrm{PS}}(B_T)\gg_{\Omega}  T^{\delGa} \sYGa(\Gamma a_T)^{1-\delGa}
 \qquad \forall \,\Gamma g \in \Omega\cap \GaG_{\mathrm{rad}},\;T\gg_{\Omega} 1.
\end{equation*}
\end{cor}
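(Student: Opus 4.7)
The plan is to deduce the corollary directly from Proposition~\ref{muPSbd} by exploiting compactness of $\Omega$ together with the continuity of the functions $C_\Gamma, D_\Gamma$ on $G$. The only real content beyond Proposition~\ref{muPSbd} is to lift the compact set $\Omega \subset \GaG$ to a compact subset of $G$ so that we can use a single representative $g \in G$ with a uniform lower bound on $C_\Gamma(g)$ and uniform upper bound on $D_\Gamma(g)$.

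First, I would produce a compact $\widetilde{\Omega}\subset G$ with $\pi(\widetilde{\Omega})\supset\Omega$, where $\pi\colon G\to\GaG$ is the natural projection. Since $\pi$ is continuous and open, one can cover $\Omega$ by finitely many relatively compact open sets of the form $\pi(U_i)$ with $U_i\subset G$ relatively compact, and then take $\widetilde{\Omega}=\overline{U_1}\cup\cdots\cup\overline{U_n}$. Because $C_\Gamma$ is continuous and strictly positive on $G$ and $D_\Gamma$ is continuous on $G$, the quantities
\begin{equation*}
C_\Omega:=\min_{g\in\widetilde{\Omega}} C_\Gamma(g)>0, \qquad D_\Omega:=\max_{g\in\widetilde{\Omega}} D_\Gamma(g)<\infty
\end{equation*}
are finite and positive.

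Now for any $\Gamma g\in\Omega\cap\GaG_{\mathrm{rad}}$ pick a representative $g_0\in\widetilde{\Omega}$ with $\Gamma g_0=\Gamma g$. Both $\mu^{\mathrm{PS}}_{\Gamma g N}(B_T)$ and $\sYGa(\Gamma g a_T)$ depend only on the coset $\Gamma g$, so Proposition~\ref{muPSbd} applied at $g_0$ yields
\begin{equation*}
\mu^{\mathrm{PS}}_{\Gamma g N}(B_T)=\mu^{\mathrm{PS}}_{\Gamma g_0 N}(B_T)\geq C_\Gamma(g_0)\, T^{\delGa}\, \sYGa(\Gamma g_0 a_T)^{1-\delGa}\geq C_\Omega\, T^{\delGa}\, \sYGa(\Gamma g a_T)^{1-\delGa}
\end{equation*}
for all $T\geq D_\Gamma(g_0)$, and in particular for all $T\geq D_\Omega$. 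This is exactly the asserted bound, with the implicit constant depending only on $\Omega$ (and $\Gamma$).

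The only potential obstacle is the lifting step, but this is entirely routine: $\pi$ is a local homeomorphism away from points with non-trivial stabiliser, and even in full generality any compact subset of the quotient lifts to a compact subset of $G$ by the argument above. No further estimates are required once Proposition~\ref{muPSbd} is in hand.
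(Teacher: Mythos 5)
Your proof is correct and is essentially the intended deduction: the paper presents the corollary without proof as an immediate consequence of Proposition~\ref{muPSbd}, and the lifting of $\Omega$ to a compact $\widetilde{\Omega}\subset G$ followed by taking $\min_{\widetilde{\Omega}} C_\Gamma$ and $\max_{\widetilde{\Omega}} D_\Gamma$ is exactly the natural way to make the constants uniform. (As a side note, both Proposition~\ref{muPSbd} and Corollary~\ref{BTCOMP} have a typo, writing $\sYGa(\Gamma a_T)$ where $\sYGa(\Gamma g a_T)$ is meant, as the proof of the proposition makes clear.)
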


\section{Effective Equidistribution of the Base Eigenfunctions}\label{phieqsec}
We will now prove the effective equidistribution of the \emph{base eigenfunctions} $\phi_n$ ($n\in\ZZ)$. Recall that each $\phi_n$ is a unit vector in $L^2(\GaG)$ of $K$-type $2n$. As a starting point, we will use expressions for the $\phi_n$ in terms of integrals against a measure in the Patterson-Sullivan density. The explicit formulas we need have been developed by Lee and Oh in \cite[Section 3]{LeeOh}. For $\theta\in \RR/\pi\ZZ$, let $\kthe=\smatr{\cos\theta}{\sin\theta}{-\sin\theta}{\cos\theta}$.
\begin{prop}(\cite[Theorem 3.3]{LeeOh}) \label{phinexp}
\begin{equation*}
\phi_n(\Gamma n_x a_y \kthe)=\scrN_{\Gamma}e^{2i n \theta} \sfrac{\sqrt{\Gamma(1-\delGa)\Gamma(|n|+\delGa)}}{\sqrt{\Gamma(\delGa)\Gamma(|n|+1-\delGa)}}\int_{\RR} \left( \sfrac{(u^2+1)y}{(x-u)^2+y^2}\right)^{\delGa} \left( \sfrac{x-u-iy}{x-u+iy}\right)^n\,d\nu_i^{(\Gamma,i)}(u)
\end{equation*}
for all $n\in\ZZ,\,x\in\RR,\,y>0,\,\theta\in\RR/\pi\ZZ$.
\end{prop}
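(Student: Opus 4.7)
The plan is to define $\tilde\phi_n:\Gamma\backslash G\to\CC$ by the right-hand side of the proposed identity (with the overall scalar set to $1$) and then identify $\tilde\phi_n$ with $\phi_n$ up to the stated normalization. The key observation is that the complementary series summand $\scrC_0$ of Proposition \ref{decomp} (the one containing $\phi_0$) is irreducible and, when restricted to $K=\mathrm{PSO}(2)$, decomposes as $\bigoplus_{n\in\ZZ}\CC\cdot e_{2n}$ with each character $k_\theta\mapsto e^{2in\theta}$ occurring with multiplicity one. Thus a $K$-type $2n$ unit vector in $\scrC_0$ is determined up to a unit phase, and it suffices to verify that $\tilde\phi_n$ (i) is well-defined and $\Gamma$-invariant, (ii) has $K$-type $2n$ under right translation, (iii) is a Casimir eigenfunction with eigenvalue $\delGa(\delGa-1)$, and (iv) has the $L^2$-norm implicit in the stated constant.

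Properties (i)--(iii) amount to bookkeeping once one rewrites the integrand in invariant form: the first factor is $e^{-\delGa\beta_u(z,i)}$, via the identity $\tfrac{(u^2+1)y}{(x-u)^2+y^2}=e^{-\beta_u(z,i)}$ for $z=x+iy$, and the second factor is the unimodular cocycle $\bigl(\tfrac{\overline{z-u}}{z-u}\bigr)^{n}=e^{-2in\arg(z-u)}$. The $K$-type then follows immediately from the explicit factor $e^{2in\theta}$ in front, since $\theta$ enters nowhere else. For $\Gamma$-invariance, substituting $u\mapsto\gamma^{-1}u$ and invoking $\gamma_*\nu_i^{(\Gamma,i)}=\nu_{\gamma\cdot i}^{(\Gamma,i)}$ together with the conformality relation $d\nu_{\gamma\cdot i}^{(\Gamma,i)}(u)=e^{-\delGa\beta_u(\gamma\cdot i,i)}d\nu_i^{(\Gamma,i)}(u)$ from \eqref{nuprops}, the shift in the Poisson kernel is exactly absorbed by the transformation of the angular cocycle under the M\"obius action. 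For the Casimir claim, it is enough to check pointwise in $u\in\RR$ that $\scrC-\delGa(\delGa-1)$ annihilates the kernel $e^{-\delGa\beta_u(z,i)}\bigl(\tfrac{\overline{z-u}}{z-u}\bigr)^{n}$ as a function of $g=n_xa_yk_\theta$; this is either a direct coordinate computation using $\scrC=H^2-H+X_+X_-$, or a recognition of the kernel as a standard matrix coefficient of the complementary series of $G$ in its line model at parameter $\delGa$.

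The main technical step is (iv), which I would carry out via the raising and lowering operators $E^\pm=H\pm iW\in\fg_{\CC}$, where $W\in\mathrm{Lie}(K)$ is normalized so that $\rho(W)$ acts on the $K$-type $2n$ subspace by $in$. Since $\rho(E^+)$ shifts $K$-type by $+2$ and each $K$-isotypic component of $\scrC_0$ is one-dimensional, we have $\rho(E^+)\phi_n=c_n\phi_{n+1}$ for some scalar $c_n$, with a dual relation for $\rho(E^-)$. Rewriting $\scrC=H^2-H+X_+X_-$ as a polynomial in $E^\pm$ and $iW$ and combining the Casimir eigenvalue $\delGa(\delGa-1)$ with the $K$-weight $2n$ of $\phi_n$ produces a recursion for $|c_n|^2$ in terms of $n$ and $\delGa$; iterating from $n=0$ and telescoping yields precisely the Gamma-ratio $\Gamma(1-\delGa)\Gamma(|n|+\delGa)/\bigl(\Gamma(\delGa)\Gamma(|n|+1-\delGa)\bigr)$. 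To pin down the scalar $\tilde\phi_n/\phi_n$, I would then differentiate the explicit $n=0$ formula \eqref{phi0def} under the integral sign via $E^\pm$ and check by induction on $|n|$ that, modulo the $c_n$'s, the result matches the integrand in the proposition. The principal obstacle is keeping phase and sign conventions consistent through the recursion; the rest is classical $\mathfrak{sl}_2$-theory and interacts with $\Gamma$ only through the initial Poisson-transform normalization $\scrN_\Gamma$ of $\phi_0$.
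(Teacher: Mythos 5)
The paper offers no proof of its own here; Proposition~\ref{phinexp} is simply cited from Lee and Oh, and the proof of the immediate Corollary~\ref{phinconj} reveals that the intended argument is essentially yours: verify the $n=0$ case from the Busemann-cocycle identity $e^{-\beta_u(z,i)}=\frac{(u^2+1)y}{(x-u)^2+y^2}$, then generate the higher $K$-types by applying raising and lowering operators, with a telescoping $\mathfrak{sl}_2$-recursion producing the Gamma-ratio normalization. Your extra abstract scaffolding (pinning down $\phi_n$ up to scalar via irreducibility of $\scrC_0$, the Casimir eigenvalue, and $K$-type multiplicity one) is a valid but optional wrapper around the same computation; to make it work you should add the one-line observation that the proposed right-hand side lies in $L^2(\GaG)$, which follows from the pointwise bound $|\widetilde\phi_n|\le\widetilde\phi_0$ together with $\delGa>\tfrac12$, so that $\widetilde\phi_n$ indeed sits in $\scrC_0$.

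One concrete slip to fix: with the paper's conventions $H=\smatr{1/2}{0}{0}{-1/2}$ and $W=X_+-X_-\in\mathrm{Lie}(K)$, the $K$-weight raising and lowering operators are $E^{\pm}=H\pm\tfrac{i}{2}(X_++X_-)$ (up to an overall scalar), i.e.\ $H$ combined with the \emph{noncompact} element $X_++X_-$. What you wrote, $E^{\pm}=H\pm iW$ with $W\in\mathrm{Lie}(K)$, satisfies $[W,H\pm iW]=[W,H]=-(X_++X_-)$, which is not a multiple of $H\pm iW$; these operators do not shift $K$-types, so the recursion as you have literally set it up would not produce the $\phi_{n\pm1}$. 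This is a bookkeeping error rather than a flaw in the strategy, and once $E^{\pm}$ is corrected the rest of your program — the Casimir/weight recursion for $|c_n|^2$ and the induction on $|n|$ starting from \eqref{phi0def} — goes through as you describe.
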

\begin{remark}
The constant $\scrN_{\Gamma}$ (cf.\ \eqref{phi0def}) does not appear in the formula given in \cite{LeeOh}. This is due to the fact that we require $\nu_i^{(\Gamma,i)}$ to be a probability measure, wheras this is not the case in \cite{LeeOh}. We thus obtain that ``$\nu_j$" in \cite{LeeOh} equals our $\scrN_{\Gamma}\,\nu_i^{(\Gamma,i)}$.
\end{remark}
\begin{cor}\label{phinconj}
\begin{equation*}
\phi_n(\Gamma g n_x a_y \kthe)=\frac{\scrN_{\Gamma} e^{2i n \theta}}{\scrP_{\Gamma}(\Gamma g)} \sfrac{\sqrt{\Gamma(1-\delGa)\Gamma(|n|+\delGa)}}{\sqrt{\Gamma(\delGa)\Gamma(|n|+1-\delGa)}}\int_{\RR} \left( \sfrac{(u^2+1)y}{(x-u)^2+y^2}\right)^{\delGa} \left( \sfrac{x-u-iy}{x-u+iy}\right)^n\,d\nu_i^{(g^{-1} \Gamma g,i)}(u)
\end{equation*}
for all $g\in G$, $n\in\ZZ,\,x\in\RR,\,y>0,\,\theta\in\RR/\pi\ZZ$.
\end{cor}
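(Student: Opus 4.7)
The plan is to transfer Proposition \ref{phinexp} from $\Gamma$ to the conjugate group $g^{-1}\Gamma g$ via the canonical $G$-equivariant isomorphism $\Psi_g \colon (g^{-1}\Gamma g)\backslash G \to \GaG$ given by $\Psi_g((g^{-1}\Gamma g) h) := \Gamma g h$. Since $\Psi_g$ is induced by left translation by $g$ on $G$, which preserves the Haar measure and intertwines the right regular representations, the pullback $f \mapsto f \circ \Psi_g$ will be a unitary isomorphism $L^2(\GaG) \to L^2((g^{-1}\Gamma g)\backslash G)$. Because $g^{-1}\Gamma g$ is still geometrically finite with $\delta_{g^{-1}\Gamma g} = \delGa$ (conjugation by $g$ is an isometry of $\HH$), Proposition \ref{decomp} applies on the conjugated space, and $\Psi_g$ should carry the complementary series component $\scrC_{s_0}$ for $\Gamma$ onto the analogous component for $g^{-1}\Gamma g$.

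The first key step will be to argue that under this identification $\phi_n^{(\Gamma)} \circ \Psi_g = \phi_n^{(g^{-1}\Gamma g)}$ for every $n \in \ZZ$. The case $n = 0$ follows from the fact that inside the irreducible representation $\scrC_{s_0}$ the base eigenfunction is uniquely characterized as the positive $K$-invariant vector of unit $L^2$-norm, and all three properties are preserved by $\Psi_g$. The case of general $n$ is then obtained by applying the universally normalized raising/lowering operators built from $X_\pm$ that produce the $K$-type $2n$ vectors from $\phi_0$, using the $G$-equivariance of $\Psi_g$ (the coefficients in Proposition \ref{phinexp} are exactly these universal constants).

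Next I would evaluate this identity at $h = e$, $n = 0$ to pin down the normalization constant $\scrN_{g^{-1}\Gamma g}$. Using \eqref{phi0def} together with the Radon--Nikodym identity from \eqref{nuprops} and the scaling \eqref{nuscaling}, one computes $\int e^{-\delGa \beta_u(g \cdot i,\, i)}\, d\nu_i^{(\Gamma, i)}(u) = \scrP_\Gamma(\Gamma g)^{-1}\, \nu_{g\cdot i}^{(\Gamma,\, g \cdot i)}(\partial_\infty \HH) = \scrP_\Gamma(\Gamma g)^{-1}$, the last equality being immediate from \eqref{nudef} since $\nu_w^{(\Gamma, w)}$ is a probability measure. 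This gives $\phi_0(\Gamma g) = \scrN_\Gamma / \scrP_\Gamma(\Gamma g)$. The analogous computation for $g^{-1}\Gamma g$ at the identity coset (where $\scrP_{g^{-1}\Gamma g}(i) = 1$ by the normalization in \eqref{nuscaling}) yields $\phi_0^{(g^{-1}\Gamma g)}((g^{-1}\Gamma g)\, e) = \scrN_{g^{-1}\Gamma g}$. Equating the two via the first step forces $\scrN_{g^{-1}\Gamma g} = \scrN_\Gamma / \scrP_\Gamma(\Gamma g)$.

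To finish, I would apply Proposition \ref{phinexp} verbatim to $g^{-1}\Gamma g$, producing the integral formula for $\phi_n^{(g^{-1}\Gamma g)}((g^{-1}\Gamma g) n_x a_y \kthe)$ with $\nu_i^{(g^{-1}\Gamma g, i)}$ in place of $\nu_i^{(\Gamma, i)}$ and $\scrN_{g^{-1}\Gamma g}$ in place of $\scrN_\Gamma$; substituting the value of $\scrN_{g^{-1}\Gamma g}$ from step three and using $\phi_n(\Gamma g n_x a_y \kthe) = \phi_n^{(g^{-1}\Gamma g)}((g^{-1}\Gamma g)\, n_x a_y \kthe)$ from step two gives exactly the claimed identity. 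The main (and really only) obstacle is the representation-theoretic identification of the base eigenfunctions in step two; one could alternatively bypass it by a direct change-of-variables computation applying Lemma \ref{conjlem} inside the integral of Corollary \ref{phinconj} and then invoking the $G$-equivariance $\beta_{g^{-1} u}(z, w) = \beta_u(g \cdot z,\, g \cdot w)$ of the Busemann cocycle, at the cost of having to explicitly unpack the Iwasawa decomposition of $g n_x a_y \kthe$.
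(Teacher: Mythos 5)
Your proposal is correct, but it takes a genuinely different route from the paper's. The paper proves the $n=0$ case by a direct manipulation of the integral defining $\phi_0(\Gamma g h)$: combining \eqref{nuscaling}, the Radon--Nikodym relation in \eqref{nuprops}, the isometry-equivariance of the Busemann cocycle, and Lemma~\ref{conjlem}, it rewrites $\scrN_{\Gamma}\int e^{-\delGa\beta_u(gh\cdot i,i)}\,d\nu_i^{(\Gamma,i)}(u)$ as $\frac{\scrN_{\Gamma}}{\scrP_{\Gamma}(\Gamma g)}\int e^{-\delGa\beta_u(h\cdot i,i)}\,d\nu_i^{(g^{-1}\Gamma g,i)}(u)$ for \emph{arbitrary} $h$, then specializes $h=n_xa_y\kthe$ to read off the kernel; the $n\neq 0$ cases follow by applying raising/lowering operators to this explicit kernel, as in \cite{LeeOh}. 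You instead work at the level of the homogeneous spaces: you introduce the unitary $G$-equivariant isomorphism $\Psi_g\colon (g^{-1}\Gamma g)\backslash G\to\GaG$, argue abstractly that $\phi_n^{(\Gamma)}\circ\Psi_g=\phi_n^{(g^{-1}\Gamma g)}$ (uniqueness of the positive $K$-fixed unit vector in the complementary series plus the universal ladder structure), establish the identity $\phi_0(\Gamma g)=\scrN_{\Gamma}/\scrP_{\Gamma}(\Gamma g)$ to pin down $\scrN_{g^{-1}\Gamma g}=\scrN_{\Gamma}/\scrP_{\Gamma}(\Gamma g)$, and then apply Proposition~\ref{phinexp} to the conjugate group. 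Your version is more conceptual and surfaces structural facts that the paper leaves implicit -- in particular the clean formula for $\phi_0(\Gamma g)$ and the transformation law for $\scrN_{\Gamma}$ -- while the paper's computation is shorter and avoids discussing the spectral decomposition of $L^2((g^{-1}\Gamma g)\backslash G)$. One small misapprehension in your closing remark: the direct change-of-variables route does \emph{not} require unpacking the Iwasawa decomposition of $gn_xa_y\kthe$; the paper carries out the manipulation for arbitrary $h\in G$ and specializes to $h=n_xa_y\kthe$ only at the very end.
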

\begin{proof}
For all $g,h\in G$, using \eqref{nuscaling} and Lemma \ref{conjlem}, we have
\begin{align*}
\phi_0(\Gamma gh)=&\scrN_{\Gamma}\int_{\RR} e^{-\delGa\beta_u(gh\cdot i,i)}d\nu_i^{(\Gamma,i)}(u)=\frac{\scrN_{\Gamma}}{\scrP_{\Gamma}(\Gamma g)} \int_{\RR} e^{-\delGa\beta_u(gh\cdot i , i)}e^{-\delGa\beta_u(i , g\cdot i)}\,d\nu_{g\cdot i}^{(\Gamma,g\cdot i)}(u)
\\&=\frac{\scrN_{\Gamma}}{\scrP_{\Gamma}(\Gamma g)} \int_{\RR} e^{-\delGa\beta_u(gh\cdot i , g\cdot i)}\,d\nu_{g\cdot i}^{(\Gamma,g\cdot i)}(u)
=\frac{\scrN_{\Gamma}}{\scrP_{\Gamma}(\Gamma g)} \int_{\RR} e^{-\delGa\beta_{g^{-1}\cdot u}(h\cdot i ,  i)}\,d\nu_{g\cdot i}^{(\Gamma,g\cdot i)}(u)
\\&=\frac{\scrN_{\Gamma}}{\scrP_{\Gamma}(\Gamma g)} \int_{\RR} e^{-\delGa\beta_{ u}(h\cdot i ,  i)}\,d\nu_{g\cdot i}^{(\Gamma,g\cdot i)}(g\cdot u)=\frac{\scrN_{\Gamma}}{\scrP_{\Gamma}(\Gamma g)} \int_{\RR} e^{-\delGa\beta_{ u}(h\cdot i ,  i)}\,d\nu_{i}^{(g^{-1}\Gamma g, i)}(u).
\end{align*}
For $h=n_x a_y \kthe$, $e^{-\delGa\beta_u(n_x a_y \kthe\cdot i , i)}=\left( \sfrac{(u^2+1)y}{(x-u)^2+y^2}\right)^{\delGa}$, so the formula holds for $n=0$. Following the proof of \cite[Theorem 3.3]{LeeOh}, the remaining cases follow from applying the \emph{raising and lowering operators} to the function $h\mapsto  e^{-\delGa\beta_{ u}(h\cdot i ,  i)}$ on $G$.
\end{proof}

It follows from the formulas above that $|\phi_n(\Gamma g)|\ll_{\Gamma} \phi_0(\Gamma g)$ for all $n\in \ZZ$, $g\in G$. 

Before stating the main result of this section, we make some auxiliary definitions: let
\begin{equation*}
c_n(\delGa):=\sfrac{\sqrt{\Gamma(1-\delGa)\Gamma(|n|+\delGa)}}{\sqrt{\Gamma(\delGa)\Gamma(|n|+1-\delGa)}}\qquad \forall n\in\ZZ,
\end{equation*}
\begin{equation*}
\psi_n(t):= \left(\frac{1}{t^2+1}\right)^{\delGa}\left(\frac{t-i}{t+i} \right)^n\qquad \forall t\in \RR,\, n\in \ZZ,
\end{equation*}
and
\begin{align*}
\kappa_n(\delGa):=\int_{\RR} \psi_n(t)\,dt=&\frac{4^{1-\delGa}\pi(-1)^n\Gamma(2\delGa-1)}{\Gamma(\delGa+n)\Gamma(\delGa-n)}=\frac{4^{1-\delGa}\pi\Gamma(2\delGa-1)\Gamma(|n|+1-\delGa)}{\Gamma(\delGa)\Gamma(1-\delGa)\Gamma(|n|+\delGa)}
\\ &=\kappa_0(\delGa)\frac{\Gamma(\delGa)\Gamma(|n|+1-\delGa)}{\Gamma(1-\delGa)\Gamma(|n|+\delGa)}=\frac{\kappa_0(\delGa)}{c_n(\delGa)^2} \quad \forall n\in \ZZ.
\end{align*}
Observe that $c_n(\delGa)\ll c_{0}(\delGa)$ and $|\kappa_n(\delGa)|\leq \kappa_0(\delGa)$.
Using the $c_n$s and $\kappa_n$s, we define the following functional on $\scrS^1(\GaG)$:
\begin{equation}\label{Mdef}
\scrM_{\Gamma}(f):=\sum_{n\in \ZZ} \scrN_{\Gamma}c_n(\delGa)\kappa_n(\delGa) \langle f, \phi_n\rangle_{L^2(\GaG)}=\sum_{n\in \ZZ} \frac{\scrN_{\Gamma}\kappa_0(\delGa)}{c_n(\delGa)} \langle f, \phi_n\rangle_{L^2(\GaG)}\quad \forall f\in \scrS^1(\GaG).
\end{equation}
We also have the following basic fact that will be used without comment throughout the proof of the main result of this section:
\begin{lem}\label{Rint}
\begin{equation*}
\int_{-R}^{R}\psi_n(t)\,dt= \kappa_{n}(\delGa)+O_{\delGa}(R^{1-2\delGa})\qquad \forall R>0
\end{equation*}
and
\begin{equation*}
\int_{\lbrace|t|\geq R\rbrace}\psi_n(t)\,dt=O_{\delGa}(R^{1-2\delGa})\qquad \forall R>0.
\end{equation*}
Both implied constants are independent of $n$.
\end{lem}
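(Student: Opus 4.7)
The plan is to exploit the unimodularity of the M\"obius factor $(t-i)/(t+i)$ on $\RR$: since $|(t-i)/(t+i)|=1$ for all $t\in\RR$, we have $|\psi_n(t)|=(t^2+1)^{-\delGa}$ independently of $n$. Consequently both assertions reduce to a single elementary tail integral, and all implied constants are automatically uniform in $n$.

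First I would establish the second (tail) estimate. For $R\geq 1$,
\[
\left|\int_{\lbrace|t|\geq R\rbrace}\psi_n(t)\,dt\right|\leq 2\int_R^{\infty}(t^2+1)^{-\delGa}\,dt\leq 2\int_R^{\infty}t^{-2\delGa}\,dt=\frac{2}{2\delGa-1}\,R^{1-2\delGa},
\]
using that $\delGa>\tfrac{1}{2}$ (the standing hypothesis of Theorem \ref{mainthm}, and in any case necessary for $\kappa_n(\delGa)$ itself to be defined as an absolutely convergent integral). For $0<R<1$ the bound is automatic, since $R^{1-2\delGa}$ is large there whereas $\bigl|\int_{|t|\geq R}\psi_n(t)\,dt\bigr|\leq \int_\RR (t^2+1)^{-\delGa}\,dt$ is a constant depending only on $\delGa$.

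The first assertion then follows at once by writing
\[
\int_{-R}^{R}\psi_n(t)\,dt=\int_\RR\psi_n(t)\,dt-\int_{\lbrace|t|\geq R\rbrace}\psi_n(t)\,dt=\kappa_n(\delGa)-\int_{\lbrace|t|\geq R\rbrace}\psi_n(t)\,dt
\]
and inserting the tail bound just obtained.

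There is essentially no obstacle here; this is a routine tail estimate recorded for later convenience. The only point worth emphasizing is that the uniformity of the implied constant in $n$ is not delicate: the entire $n$-dependence of $\psi_n$ is carried by the unit-modulus phase $((t-i)/(t+i))^n$, which is invisible to any absolute-value argument.
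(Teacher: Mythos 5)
Your proof is correct, and it is the natural argument one would supply here: the paper states Lemma \ref{Rint} without proof (calling it a ``basic fact''), and the intended argument is exactly what you give, namely that $|(t-i)/(t+i)|=1$ on $\RR$, so $|\psi_n(t)|=(t^2+1)^{-\delGa}$ is independent of $n$ and the whole lemma reduces to the elementary tail estimate $\int_R^\infty t^{-2\delGa}\,dt\ll_{\delGa} R^{1-2\delGa}$, valid precisely because $\delGa>\tfrac12$. Your observation that the uniformity in $n$ is automatic because the $n$-dependence sits entirely in a unimodular phase is the key point, and your handling of the trivial range $0<R<1$ is correct.
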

We now come to the main result of this section, which is essentially an effective equidistribution statement for the base eigenfunctions:
\begin{thm}\label{phiequi}
For all $\Gamma g\in \GaG_{\mathrm{rad}},\,T\geq 4,\, n\in \ZZ$,
\begin{equation}\label{phiequieq}
\int_{-T}^T \phi_n(\Gamma g n_t)\,dt =\mu_{\Gamma g N}^{\mathrm{PS}}(B_T)\scrN_{\Gamma}c_n(\delGa)\kappa_{n}(\delGa)+O_{\Gamma}\!\left( \mu_{\Gamma g N}^{\mathrm{PS}}(B_T)T^{\frac{1}{2}-\delGa}+\sYGa(\Gamma g a_T)^{1-\delGa} T^{\frac{1}{2}} \right).
\end{equation}
\end{thm}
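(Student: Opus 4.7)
The plan is to start from the explicit formula in Corollary \ref{phinconj} at $y=1$ and $\theta=0$, which reads
\begin{equation*}
\phi_n(\Gamma g n_x) = \frac{\scrN_{\Gamma}\,c_n(\delGa)}{\scrP_{\Gamma}(\Gamma g)}\int_{\RR}(u^2+1)^{\delGa}\psi_n(x-u)\,d\nu_i^{(g^{-1}\Gamma g,i)}(u).
\end{equation*}
Integrating in $x\in[-T,T]$ and applying Fubini yields
\begin{equation*}
\int_{-T}^T\phi_n(\Gamma g n_x)\,dx = \frac{\scrN_{\Gamma}\,c_n(\delGa)}{\scrP_{\Gamma}(\Gamma g)}\int_{\RR}(u^2+1)^{\delGa}F_n(u,T)\,d\nu_i^{(g^{-1}\Gamma g,i)}(u),
\end{equation*}
where $F_n(u,T):=\int_{-T-u}^{T-u}\psi_n(s)\,ds$. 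Combining Lemma \ref{conjBT} with \eqref{Ninftmeas} identifies $\scrP_{\Gamma}(\Gamma g)^{-1}\int_{-T}^T(u^2+1)^{\delGa}\,d\nu_i^{(g^{-1}\Gamma g,i)}$ with $\mPSGgN(B_T)$, so the target main term is exactly what is produced by replacing $F_n(u,T)$ with $\kappa_n(\delGa)$ and restricting the $u$-integration to $[-T,T]$. The proof thus reduces to controlling the discrepancy incurred by this substitution.

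To control it I would establish three pointwise estimates on $F_n$, using Lemma \ref{Rint} and $|\psi_n(s)|=(s^2+1)^{-\delGa}$: (i) for $|u|\le(1-\epsilon)T$, $F_n(u,T)=\kappa_n(\delGa)+O_{\delGa}((\epsilon T)^{1-2\delGa})$; (ii) globally, $|F_n(u,T)|\ll_{\delGa}1$ (needing $\delGa>\sfrac{1}{2}$ so that $\psi_n\in L^1(\RR)$); (iii) for $|u|>T$, $|F_n(u,T)|\ll_{\delGa}\min\{(|u|-T)^{1-2\delGa},\,T(|u|-T)^{-2\delGa}\}$, the two bounds coming respectively from direct integration of $|\psi_n|$ and from $L^{\infty}$-bounding $|\psi_n|$ on the length-$2T$ integration window. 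With a parameter $\epsilon\in(0,\sfrac{1}{2}]$ to be tuned, I split the $d\nu$-integral into the bulk $\{|u|\le(1-\epsilon)T\}$, the annulus $\{(1-\epsilon)T\le|u|\le(1+\epsilon)T\}$, the intermediate shell $\{(1+\epsilon)T\le|u|\le 2T\}$, and the far tail $\{|u|\ge 2T\}$.

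On the bulk, estimate (i) yields the main term plus an error of order $(\epsilon T)^{1-2\delGa}\mPSGgN(B_T)$ (after absorbing $\scrP_{\Gamma}(\Gamma g)$ via Lemma \ref{conjBT}). On the annulus, combining (ii) with Lemma \ref{diffmeas} on the measure side and the crude upper bound $(u^2+1)^{\delGa}\ll T^{2\delGa}$ produces $O_{\Gamma}(\scrP_{\Gamma}(\Gamma g)\,\epsilon^{2\delGa-1}T^{\delGa}\sYGa(\Gamma g a_T)^{1-\delGa})$. For the intermediate shell, I dyadically decompose $|u|-T\asymp 2^j\epsilon T$ for $j=0,\ldots,O(\log(1/\epsilon))$, and apply Lemma \ref{diffmeas} to each sub-shell together with the first clause of (iii); each such contribution is uniform in $j$, so summing gives $O_{\Gamma}(\scrP_{\Gamma}(\Gamma g)\,T^{1-\delGa}\sYGa(\Gamma g a_T)^{1-\delGa}\log(1/\epsilon))$.

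The main obstacle is the far tail $\{|u|\ge 2T\}$, since the weighted measure $(u^2+1)^{\delGa}\,d\nu$ has infinite total mass and one genuinely needs the quantitative decay of $F_n$ furnished by the second clause of (iii). A dyadic decomposition $|u|\asymp 2^kT$, $k\ge 1$, combined with the tail bound Lemma \ref{Tinftymeasure} and the monotonicity $\sYGa(\Gamma g a_{2^kT})\le 2^k\sYGa(\Gamma g a_T)$ from Proposition \ref{Yprops}\textit{(2)} converts the contribution into a geometric series with common ratio $2^{1-2\delGa}<1$ (this is where the hypothesis $\delGa>\sfrac{1}{2}$ is essential), summing to $O_{\Gamma}(\scrP_{\Gamma}(\Gamma g)\,T^{1-\delGa}\sYGa(\Gamma g a_T)^{1-\delGa})$. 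Assembling the four pieces and choosing $\epsilon=T^{-1/2}$ (legitimate for $T\ge 4$) balances bulk against boundary: the bulk error becomes $T^{\sfrac{1}{2}-\delGa}\mPSGgN(B_T)$, and the annular, intermediate, and tail contributions all collapse into $O_{\Gamma}(T^{1/2}\sYGa(\Gamma g a_T)^{1-\delGa})$ (with $1-\delGa<\sfrac{1}{2}$ absorbing the logarithmic loss), producing \eqref{phiequieq}.
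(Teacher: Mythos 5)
Your proposal is correct and follows essentially the same route as the paper: the same starting formula from Corollary~\ref{phinconj}, the same identification of the main term via Lemma~\ref{conjBT} and~\eqref{Ninftmeas}, the same four-region split of the $u$-integral with boundary $\epsilon T$, and the same choice $\epsilon=T^{-1/2}$. The only genuine variation is in how you bound regions III and IV. For the intermediate shell $(1+\epsilon)T\le|u|\le 2T$, the paper simply applies the uniform tail bound $|F_n(u,T)|\ll(\epsilon T)^{1-2\delGa}$ together with a single application of Lemma~\ref{Tinftymeasure}, yielding $T^{1-\delGa}\epsilon^{1-2\delGa}\sYGa(\Gamma g a_T)^{1-\delGa}$; your dyadic sub-decomposition with Lemma~\ref{diffmeas} gives the sharper $T^{1-\delGa}\sYGa(\Gamma g a_T)^{1-\delGa}\log(1/\epsilon)$, though you would need to handle the outermost few sub-shells (where $2^j\epsilon>\sfrac{1}{2}$, outside the stated range of Lemma~\ref{diffmeas}) via Lemma~\ref{Tinftymeasure} instead — a trivial patch. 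For the far tail $|u|\ge 2T$, the paper observes that $(T2^m)^{2\delGa}\cdot T(T2^m)^{-2\delGa}=T$ so the dyadic sum telescopes to $T\cdot\nu_i^{(g^{-1}\Gamma g,i)}(\{|u|\ge 2T\})$ and Lemma~\ref{Tinftymeasure} is applied just once; you instead invoke the shadow lemma shell-by-shell and sum a geometric series with ratio $2^{1-2\delGa}<1$, using Proposition~\ref{Yprops}~\textit{(2)} to compare $\sYGa(\Gamma g a_{2^kT})$ with $\sYGa(\Gamma g a_T)$. Both computations are correct and land on the same final error; the paper's is slightly more economical.
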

\begin{proof}
Using Corollary \ref{phinconj}, Lemma \ref{conjBT}, and \eqref{Ninftmeas}, we have
\begin{align*}
\int_{-T}^{T}& \phi_n(\Gamma g n_t)\,dt-\scrN_{\Gamma}c_n(\delGa)\kappa_{n}(\delGa)\mPSGgN(B_T)
\\&=\frac{\scrN_{\Gamma}c_{\delGa}(n)}{\scrP_{\Gamma}(\Gamma g)}\bigg(\int_{-T}^{T} \int_{\RR}\left( \sfrac{(u^2+1)}{(t-u)^2+1}\right)^{\delGa} \left( \sfrac{t-u-i}{t-u+i}\right)^n \,d\nu_i^{(g^{-1}\Gamma g,i)}(u)\,dt
\\&\qquad\qquad\qquad\qquad\qquad\qquad-\kappa_n(\delGa)\int_{-T}^T(1+u^2)^{\delGa}\,d\nu_i^{(g^{-1}\Gamma g,i)}(u)\bigg)
\\&= \frac{\scrN_{\Gamma}c_{\delGa}(n)}{\scrP_{\Gamma}(\Gamma g)}\int_{\RR}\left( \int_{-T}^{T} \psi_n(t-u)\,dt - \kappa_{n}(\delGa)\mathbbm{1}_{[-T,T]}(u)\right)(1+u^2)^{\delGa}\,d\nu_i^{(g^{-1}\Gamma g,i)}(u)
\\&=\frac{\scrN_{\Gamma}c_{\delGa}(n)}{\scrP_{\Gamma}(\Gamma g)}\int_{\RR}\left( \int_{-u-T}^{T-u} \psi_n(t)\,dt -\kappa_{n}(\delGa)\mathbbm{1}_{[-T,T]}(u)\right)(1+u^2)^{\delGa}\,d\nu_i^{(g^{-1}\Gamma g,i)}(u)
\end{align*}
(note that $\sup_{u\in \RR} \max_{t\in[-T,T]} \frac{u^2+1}{(t-u)^2+1}\ll_T 1 $ and $\nu$ is a finite measure; this permits the  interchanging of the order of integration). We now choose some $\epsilon$, $0<\epsilon<1$, and split the integral over $\RR$ as follows
\begin{align*}
\int_{\RR}= \int_{\lbrace u\,:\, |u|\leq (1-\epsilon)T\rbrace} +\int_{\lbrace u\,:\,(1-\epsilon)T\leq|u|\leq (1+\epsilon)T\rbrace}+\int_{\lbrace u\,:\,  (1+\epsilon)T\leq |u|\leq 2T\rbrace}+\int_{u\,:\,\lbrace |u|\geq 2T\rbrace}. 
\end{align*}
We bound each of these four integrals in turn:
\begin{enumerate}[I:]
\item $\lbrace u\,:\,|u|\leq (1-\epsilon)T\rbrace$. Since $\mathbbm{1}_{[-T,T]}(u)\equiv 1$, the integral we are interested in is
\begin{equation*}
\int_{-(1-\epsilon)T}^{(1-\epsilon)T}\left( \int_{-u-T}^{T-u} \psi_n(t)\,dt -\kappa_{n}(\delGa)\right)(1+u^2)^{\delGa}\,d\nu_i^{(g^{-1}\Gamma g,i)}(u).
\end{equation*} 
Using $|u|\leq(1-\epsilon)T$, 
\begin{equation*}
\int_{-u-T}^{T-u} \psi_n(t)\,dt=\int_{-\epsilon T}^{\epsilon T}\psi_n(t)\,dt+\int_{\epsilon T}^{T-u}\psi_n(t)\,dt+\int_{u- T}^{-\epsilon T}\psi_n(t)\,dt=\kappa_n(\delGa)+O\big( (\epsilon T)^{1-2\delGa})\big),
\end{equation*}
hence 
\begin{align}\label{int1bdd}
\frac{\scrN_{\Gamma}c_{\delGa}(n)}{\scrP_{\Gamma}(\Gamma g)}&\int_{-(1-\epsilon)T}^{(1-\epsilon)T}\left( \int_{-u-T}^{T-u} \psi_n(t)\,dt -\kappa_{n}(\delGa)\right)(1+u^2)^{\delGa}\,d\nu_i^{(g^{-1}\Gamma g,i)}(u)
\\\notag&=O_{\Gamma}\left( \frac{(T\epsilon)^{1-2\delGa}}{\scrP_{\Gamma}(\Gamma g)}\int_{-(1-\epsilon)T}^{(1-\epsilon)T}(1+u^2)^{\delGa}\,d\nu_i^{(g^{-1}\Gamma g,i)}(u)\right)
\\\notag&=O_{\Gamma}\left( \frac{(T\epsilon)^{1-2\delGa}}{\scrP_{\Gamma}(\Gamma g)}\mu_{(g^{-1}\Gamma g) e N}^{\mathrm{PS}}(B_{(1-\epsilon)T})\right)
\\\notag&=O_{\Gamma}\left( (T\epsilon)^{1-2\delGa}\mPSGgN(B_{(1-\epsilon)T})\right)=O_{\Gamma}\left( (T\epsilon)^{1-2\delGa}\mPSGgN(B_{T})\right),
\end{align}
where Lemma \ref{conjBT} and \eqref{Ninftmeas} were again used. 
\item $\lbrace u\,:\,(1-\epsilon)T\leq |u|\leq (1+\epsilon)T\rbrace$. Here we use the bound
\begin{equation*}
\left| \int_{-u-T}^{T-u} \psi_n(t)\,dt -\kappa_{n}(\delGa)\mathbbm{1}_{[-T,T]}(u) \right|\leq 2\kappa_0(\delGa)=O_{\Gamma}(1).
\end{equation*}
Assuming $\epsilon\leq \frac{1}{2}$, we now use Proposition \ref{diffmeas}:
\begin{align}\label{int2bdd}
\frac{\scrN_{\Gamma}c_{\delGa}(n)}{\scrP_{\Gamma}(\Gamma g)}&\int_{\lbrace u\,:\,(1-\epsilon)T\leq|u|\leq (1+\epsilon)T\rbrace}\left( \int_{-u-T}^{T-u} \psi_n(t)\,dt -\kappa_{n}(\delGa)\right)(1+u^2)^{\delGa}\,d\nu_i^{(g^{-1}\Gamma g,i)}(u)
\\\notag&=O_{\Gamma}\left( \frac{T^{2\delGa}}{\scrP_{\Gamma}(\Gamma g)}  \nu_i^{(g^{-1}\Gamma g,i)}\big(\lbrace u\in\RR\,:\,(1-\epsilon)T\leq |u|\leq (1+\epsilon)T\rbrace \big)\right)\\\notag&= O_{\Gamma}\left( T^{\delGa}\epsilon^{2\delGa-1} \sYGa(\Gamma g a_{T})^{1-\delGa} \right).
\end{align}
\item $\lbrace u\,:\,(1+\epsilon)T\leq |u|\leq 2T\rbrace$. For $u$ in this range we have
\begin{align*}
&\left| \int_{-u-T}^{T-u} \psi_n(t)\,dt -\kappa_{n}(\delGa)\mathbbm{1}_{[-T,T]}(u) \right|=\left| \int_{-u-T}^{T-u} \psi_n(t)\,dt  \right|\\&\leq \int_{-u-T}^{T-u} \psi_0(t)\,dt=\int_{|u|-T}^{T+|u|} \psi_0(t)\,dt\leq \int_{\lbrace |t|\geq \epsilon T\rbrace} \psi_0(t)\,dt=O\big( (\epsilon T)^{1-2\delGa}\big).
\end{align*}
Lemma \ref{Tinftymeasure} gives
\begin{align}\label{int3bdd}
\frac{\scrN_{\Gamma}c_{\delGa}(n)}{\scrP_{\Gamma}(\Gamma g)}&\int_{\lbrace u\,:\,(1+\epsilon)T\leq|u|\leq 2T\rbrace}\left( \int_{-u-T}^{T-u} \psi_n(t)\,dt -\kappa_{n}(\delGa)\right)(1+u^2)^{\delGa}\,d\nu_i^{(g^{-1}\Gamma g,i)}(u)
\\\notag&=O_{\Gamma}\left( \frac{1}{\scrP_{\Gamma}(\Gamma g)}T^{2\delGa}(\epsilon T)^{1-2\delGa}\nu_i^{(g^{-1}\Gamma g,i)}\big( \lbrace x\in\RR\,:\, |x|\geq (1+\epsilon)T\rbrace\big)\right)
\\\notag&=O_{\Gamma}\left( \frac{1}{\scrP_{\Gamma}(\Gamma g)}T\epsilon^{1-2\delGa}\nu_i^{(g^{-1}\Gamma g,i)}\big( \lbrace x\in\RR\,:\, |x|\geq T\rbrace\big)\right)
\\\notag&=O_{\Gamma}\left(T^{1-\delGa}\epsilon^{1-2\delGa}  \sYGa(\Gamma g a_T)^{1-\delGa}\right).
\end{align}
\item $\lbrace u\,:\,|u|\geq 2T\rbrace$. For the final integral, we use dyadic decomposition: 
\begin{align*}
\frac{\scrN_{\Gamma}c_{\delGa}(n)}{\scrP_{\Gamma}(\Gamma g)}&\int_{\lbrace u\,:\,2|u|\geq 2T\rbrace}\left( \int_{-u-T}^{T-u} \psi_n(t)\,dt -\kappa_{n}(\delGa)\right)(1+u^2)^{\delGa}\,d\nu_i^{(g^{-1}\Gamma g,i)}(u)
\\&= \frac{\scrN_{\Gamma}c_{\delGa}(n)}{\scrP_{\Gamma}(\Gamma g)}\sum_{m=1}^{\infty}\int_{\lbrace u\,:\,2^mT\leq|u|< 2^{m+1}T\rbrace}\int_{-u-T}^{T-u} \psi_n(t)\,dt(1+u^2)^{\delGa}\,d\nu_i^{(g^{-1}\Gamma g,i)}(u)
\\& =O_{\Gamma}\left( \frac{1}{\scrP_{\Gamma}(\Gamma g)}\sum_{m=1}^{\infty}(T 2^m)^{2\delGa}\int_{\lbrace u\,:\,2^mT\leq|u|< 2^{m+1}T\rbrace}\int_{-u-T}^{T-u} \psi_0(t)\,dt\,d\nu_i^{(g^{-1}\Gamma g,i)}(u)\right).
\end{align*}
For $u$ such that $T 2^m\leq |u| \leq T 2^{m+1}$, $m\geq 1$, we have
\begin{align*}
\int_{-u-T}^{T-u} \psi_0(t)\,dt \leq &\frac{T}{(1+\min_{t\in[-T,T]}|u-t|^2)^{\delGa}}\leq \frac{T}{(T( 2^m-1))^{2\delGa}}\\&\leq T^{1-2\delGa}2^{-2\delGa (m-1)}\ll T^{1-2\delGa}2^{-2\delGa m},
\end{align*}
so
\begin{align*}
\sum_{m=1}^{\infty}&(T 2^m)^{2\delGa}\int_{\lbrace u\,:\,2^mT\leq|u|< 2^{m+1}T\rbrace}\int_{-u-T}^{T-u} \psi_0(t)\,dt\,d\nu_i^{(g^{-1}\Gamma g,i)}(u)\\&=O\left(T \nu_i^{(g^{-1}\Gamma g,i)}\big( \lbrace x\in\RR\,:\, |x|\geq 2T\rbrace\big) \right).
\end{align*}
We use Lemma \ref{Tinftymeasure} again to obtain
\begin{align}\label{intbdd4}
\frac{\scrN_{\Gamma}c_{\delGa}(n)}{\scrP_{\Gamma}(\Gamma g)}&\int_{\lbrace u\,:\,|u|\geq 2T\rbrace}\left( \int_{-u-T}^{T-u} \psi_n(t)\,dt -\kappa_{n}(\delGa)\right)(1+u^2)^{\delGa}\,d\nu_i^{(g^{-1}\Gamma g,i)}(u)
\\\notag&=O_{\Gamma}\left( T^{1-\delGa} \sYGa(\Gamma g a_T)^{1-\delGa}\right).
\end{align}
\end{enumerate}
Combining \eqref{int1bdd}, \eqref{int2bdd}, \eqref{int3bdd}, and \eqref{intbdd4} gives
\begin{align*}
\int_{-T}^T& \phi_n(\Gamma g n_t)\,dt=\mu_{\Gamma g N}^{\mathrm{PS}}(B_T)\scrN_{\Gamma}c_n(\delGa)\kappa_{n}(\delGa)\\&\qquad +O_{\Gamma}\left((T\epsilon)^{1-2\delGa}\mPSGgN(B_{T})+\sYGa(\Gamma g a_T)^{1-\delGa}\big(T^{\delGa}\epsilon^{2\delGa-1}+T^{1-\delGa}\epsilon^{1-2\delGa}  +T^{1-\delGa}\big)\right).
\end{align*}
Now choosing $\epsilon=T^{-\frac{1}{2}}$ completes the proof (this is permitted since $T\geq 4$, and the only requirement placed on $\epsilon$ is $0<\epsilon\leq\frac{1}{2}$).
\end{proof}
\begin{cor}\label{baseeffeqcor}
Let $\Omega\subset \GaG$ be compact. Then
\begin{equation*}
\frac{1}{\mu_{\Gamma g N}^{\mathrm{PS}}(B_T)}\int_{-T}^T \phi_n(\Gamma g n_t)\,dt =\scrN_{\Gamma}c_n(\delGa)\kappa_{n}(\delGa)+O_{\Gamma,\Omega}\left( T^{\frac{1}{2}-\delGa}\right)
\end{equation*}
for all $g\in \Omega\cap\GaG_{\mathrm{rad}}$, $T\gg_{\Omega} 1$, $n\in\ZZ$.
\end{cor}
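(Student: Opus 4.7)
The plan is direct: this corollary is an immediate consequence of Theorem \ref{phiequi} combined with the uniform lower bound on the Patterson-Sullivan measure supplied by Corollary \ref{BTCOMP}. Specifically, I would divide the main identity of Theorem \ref{phiequi} by $\mu_{\Gamma g N}^{\mathrm{PS}}(B_T)$, producing a normalized error of the form
\begin{equation*}
O_{\Gamma}\!\left(T^{\frac{1}{2}-\delGa} + \frac{\sYGa(\Gamma g a_T)^{1-\delGa} T^{\frac{1}{2}}}{\mu_{\Gamma g N}^{\mathrm{PS}}(B_T)}\right).
\end{equation*}
The first summand is already in the shape required by the corollary. For the second summand, Corollary \ref{BTCOMP} gives the lower bound $\mu_{\Gamma g N}^{\mathrm{PS}}(B_T) \gg_{\Omega} T^{\delGa}\sYGa(\Gamma g a_T)^{1-\delGa}$, uniform for $\Gamma g \in \Omega\cap\GaG_{\mathrm{rad}}$ and $T \gg_{\Omega} 1$. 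The $\sYGa(\Gamma g a_T)^{1-\delGa}$ factors cancel exactly, reducing this summand to $O_{\Omega}(T^{\frac{1}{2}-\delGa})$, which matches the first. Combining the two yields a single $O_{\Gamma,\Omega}(T^{\frac{1}{2}-\delGa})$ error term.

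There is essentially no obstacle at this stage; the statement is a bookkeeping consequence of what has already been established. All the analytic work — the shadow-lemma estimates on the Patterson-Sullivan density, the conformal-density calculation producing the main term $\scrN_{\Gamma} c_n(\delGa)\kappa_n(\delGa)$, and the cusp-sensitive lower bound on $\mu_{\Gamma g N}^{\mathrm{PS}}(B_T)$ coming from Proposition \ref{muPSbd} and Theorem \ref{Shadow} — is already done. The sole role of the compactness hypothesis on $\Omega$ is to uniformize the implied constant in the Patterson-Sullivan lower bound, since Proposition \ref{muPSbd} yields a bound whose constant $C_{\Gamma}(g)$ and threshold $D_{\Gamma}(g)$ depend continuously on $g$, and hence are controlled on any compact lift of $\Omega$ to $G$. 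Once that uniformization is invoked, the conclusion reads off directly.
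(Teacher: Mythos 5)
Your proposal is correct and matches the paper's one-line proof exactly: divide the identity of Theorem \ref{phiequi} by $\mPSGgN(B_T)$ and apply Corollary \ref{BTCOMP}, noting that the $\sYGa(\Gamma g a_T)^{1-\delGa}$ factors cancel to leave $T^{\frac{1}{2}-\delGa}$ in both error contributions.
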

\begin{proof}
Divide both sides of \eqref{phiequieq} by $\mPSGgN(B_T)$ and apply Corollary \ref{BTCOMP}.
\end{proof}

\section{Effective Equidistribution in the Orthogonal Complement of $\scrH_{\delGa}$}\label{orthsec}
Let $\scrH_1$ denote the orthogonal complement in $L^2(\GaG)$ of $\scrC_{0}$, i.e.
\begin{equation*}
\scrH_1=\left(\bigoplus_{i=1}^{I}\scrC_{i} \right)\oplus L^2(\GaG)_{\mathit{temp}}
\end{equation*}
(cf.\ Proposition \ref{decomp}).
\subsection{Effective equidistribution}
Str\"ombergsson's proof of \cite[Theorem 1]{Andjmd} carries over to our setting of infinite covolume geometrically finite $\Gamma$, giving the following effective equidistribution result for functions in $\scrH_1$:
\begin{thm}\label{StromEfEq}
For all $f\in \scrS^4(\GaG)\cap\scrH_1\cap \scrB_{\alpha}$, $0\leq\alpha<\frac{1}{2}$, $T\gg1$,
\begin{align*}
\frac{1}{2T}\int_{-T}^T f(\Gamma g n_t)\,dt =O_{\Gamma}\Bigg(\|f\|_{\scrS^4(\GaG)}&\left\lbrace\left(\sfrac{\sYGa(\Gamma g a_T)}{T}\right)^{1-s_1}+\left(\sfrac{\sYGa(\Gamma g a_T)}{T}\right)^{\frac{1}{2}}\log^3\left(2+\sfrac{T}{\sYGa(\Gamma g a_T)}\right)\right\rbrace\\&\qquad\qquad\qquad\qquad\qquad\qquad+ \|f\|_{N_{\alpha}}\left(\sfrac{\sYGa(\Gamma g a_T)}{T}\right)^{\frac{1}{2}}\!\Bigg).
\end{align*} 
\end{thm}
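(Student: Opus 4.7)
\begin{prfdisc*}
The plan is to mimic Str\"ombergsson's proof of \cite[Theorem 1]{Andjmd}, which proceeds via a global spectral decomposition followed by a cusp-by-cusp Fourier analysis. The crucial structural observation is that every cusp of $\GaH$ is of full rank (automatic in dimension two), so the local Fourier expansion of $f$ in any cuspidal neighborhood has the same shape as in the $\SLZ\backslash\SLR$ case. Hence the only genuine adaptation needed is at the level of the global spectral decomposition, which is supplied by Proposition \ref{decomp}.

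First I would split $f = \sum_{i=1}^{I} f_i + f_{\text{temp}}$ along Proposition \ref{decomp}, noting that each projection is continuous in $\|\cdot\|_{\scrS^4(\GaG)}$. For each $f_i\in\scrC_i$ with $1\le i\le I$, I would decompose $f_i$ into $K$-isotypic components and run an argument entirely parallel to the proof of Theorem \ref{phiequi}, with the Patterson--Sullivan density replaced by the conformal density of dimension $s_i$ representing $\scrC_i$ (and with Lemma \ref{Compbdd} replacing the pointwise bound \eqref{phi0ptbdd}). Since $f_i\perp\scrC_0$ there is no main term, and one is left with an error of size $\|f_i\|_{\scrS^4(\GaG)}\,\sYGa(\Gamma g a_T)^{1-s_i}T^{s_i}$. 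Dividing by $T$ and using $s_i\le s_1$ bundles all these contributions into the $(\sYGa(\Gamma g a_T)/T)^{1-s_1}$ term of the theorem.

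The substantial work is the tempered part $f_{\text{temp}}$. Here the Casimir spectrum lies in $[\tfrac14,\infty)$, and one Fourier-expands $f_{\text{temp}}$ along each cusp. The zeroth Fourier coefficient is controlled by pointwise Whittaker estimates, which yield an $\|f\|_{\scrS^4(\GaG)}(\sYGa(\Gamma g a_T)/T)^{1/2}$ bound with no room for a complementary-series-type improvement; the nonzero Fourier coefficients decay exponentially in the cuspidal height via $K$-Bessel asymptotics, and are summable. A dyadic decomposition of the set of times $t\in[-T,T]$ according to the height $\sYGa(\Gamma g a_t)$, together with summation over the finitely many cusps, produces the cubic logarithm $\log^3(2+T/\sYGa(\Gamma g a_T))$. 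The final $\|f\|_{N^\alpha}(\sYGa(\Gamma g a_T)/T)^{1/2}$ term arises from pre-truncating $f$ above the critical height $Y=\sYGa(\Gamma g a_T)$, estimating the truncated part via $|f|\le\|f\|_{N^\alpha}\sYGa^\alpha$ and bounding the time the horocycle segment spends above height $Y$ using the relation $a_Tn_ta_T^{-1}=n_{Tt}$ together with Proposition \ref{Yprops}.

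The main obstacle is the tempered estimate: one has to verify that Str\"ombergsson's cusp-by-cusp Whittaker analysis, which in the lattice setting exploits classical estimates on the $K$-Bessel function and a careful choice of truncation height, produces error terms that interact correctly with the invariant height function $\sYGa$. Because the local cusp geometry in our setting coincides with that of the lattice case (as already exploited in the proof of Lemma \ref{Compbdd}) and Lemma \ref{Compbdd} records the correct $\sYGa$-dependence, this adaptation is essentially bookkeeping rather than new analysis; the structural ingredients of \cite{Andjmd}---spectral decomposition, Fourier expansion in the cusp, $K$-Bessel bounds, dyadic decomposition in height---all transfer intact.
\end{prfdisc*}
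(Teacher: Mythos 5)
Your overall plan---adapt Str\"ombergsson's \cite[Proposition 3.1 and Theorem 1]{Andjmd}, exploiting that the cusps of a geometrically finite infinite-volume surface have the same local structure as in the lattice case---is exactly what the paper does; the paper's proof is essentially a citation together with the observation that the lattice assumption enters only once (\cite[bottom of p.\ 304]{Andjmd}) and that one may treat the cuspidal part of the tempered spectrum by the same argument as the continuous part since the finer decomposition is not needed here.

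However, your proposed treatment of the complementary series components $f_i\in\scrC_i$ ($1\le i\le I$) has a genuine gap. You suggest running the argument of Theorem \ref{phiequi} with ``the conformal density of dimension $s_i$ representing $\scrC_i$'' in place of the Patterson--Sullivan density. No such density exists: $\Gamma$-invariant conformal densities on $\partial_{\infty}\HH$ exist only for dimensions $s\ge\delGa$ (the Poincar\'e series of exponent $s<\delGa$ diverges), whereas here $\tfrac12\le s_I\le\cdots\le s_1<\delGa=s_0$. The obstruction is also structural: the $K$-fixed vector in $\scrC_i$ for $i\ge1$ changes sign, so its boundary object under the Poisson transform cannot be a positive measure, and the shadow-lemma machinery of Section \ref{PatSulSec} underpinning Theorem \ref{phiequi} has no analogue. (Even in the lattice case $\delGa=1$ the small positive eigenvalues $\lambda_i$ are not associated with conformal densities, so this is certainly not how Str\"ombergsson handles them.) This is precisely why the paper isolates $\scrC_0$---treated by the genuinely new Theorem \ref{phiequi}---from $\scrH_1$, which it handles wholesale by Str\"ombergsson's method: there the contribution of the $\scrC_i$ with $i\ge1$ is estimated from the cuspidal pointwise decay of Lemma \ref{Compbdd} together with $L^2$/Sobolev estimates, not from any boundary-measure computation. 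Replacing your ``mimic Theorem \ref{phiequi}'' step for the $\scrC_i$ with Str\"ombergsson's treatment of the small-eigenvalue part would close the gap; the remainder of your tempered-part outline is in the right direction.
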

\begin{prfdisc*}\label{stromremark}
 It is assumed throughout \cite{Andjmd} that $\Gamma$ is a lattice. However, by following the proofs of \cite[Proposition 3.1 and Theorem 1]{Andjmd}, one obtains the statement above. (The only place in the aforementioned proofs where the fact that $\GaG$ has finite volume is used is \cite[bottom of p. 304]{Andjmd}. We do not claim (or require) as precise a statement as \cite[Theorem 1]{Andjmd}-in particular, we do not distinguish between the cuspidal and non-cuspidal parts of the tempered spectrum. One may thus replace the arguments of \cite{Andjmd} regarding the tempered cuspidal spectrum on \cite[pp. 304-305]{Andjmd} with the treatment of the continuous spectrum given on \cite[pp. 302-303]{Andjmd}.) Indeed, the results of \cite{Andjmd} are based on a representation-theoretic method first developed by Burger in \cite{Burger90} in order to classify the $N$-invariant ergodic Radon measures on $\GaG$ for $\Gamma$ convex-cocompact (possibly of infinite covolume) with $\delGa>\frac{1}{2}$. In \cite{Andjmd}, Str\"ombergsson combined this method with properties of the invariant height function $\sYGa$ to show the effective equidistribution of dense horocycles in any finite-volume $\GaG$. As noted previously, due to the fact that the cusps of geometrically finite hyperbolic surfaces with infinite volume have the same structure as those of finite volume surfaces, their invariant height functions share essentially the same properties, allowing the same treatment to work here.      
\end{prfdisc*}
The following follows from Theorem \ref{StromEfEq} (and Corollary \ref{BTCOMP}) in the same way that Corollary \ref{baseeffeqcor} follows from Theorem \ref{phiequi}:
\begin{cor}\label{Hs1PSeq}
Let $\Omega\subset \GaG$ be compact. Then for all $\Gamma g\in \Omega\cap\GaG_{\mathrm{rad}}$, $f\in \scrS^4(\GaG)\cap\scrH_1\cap \scrB_{\alpha}$, $0\leq \alpha <\frac{1}{2}$, and $T\gg_{\Omega} 1$,
\begin{align*}
\frac{1}{\mPSGgN(B_T)}\int_{-T}^T f(\Gamma g n_t)\,dt =O_{\Gamma,\Omega}\Bigg(&\|f\|_{\scrS^4(\GaG)}\left(\sfrac{\sYGa(\Gamma g a_T)}{T}\right)^{\delGa-s_1}+ \|f\|_{N_{\alpha}}\left(\sfrac{\sYGa(\Gamma g a_T)}{T}\right)^{\delGa-\frac{1}{2}}\\&\quad+\|f\|_{\scrS^4(\GaG)}\left(\sfrac{\sYGa(\Gamma g a_T)}{T}\right)^{\delGa-\frac{1}{2}}\log^3\left(2+\sfrac{T}{\sYGa(\Gamma g a_T)}\right)\Bigg).
\end{align*} 
\end{cor}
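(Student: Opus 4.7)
The plan is to derive the corollary by directly combining the effective equidistribution result of Theorem \ref{StromEfEq}, which provides a bound for the integral normalised by $\frac{1}{2T}$, with the lower bound on $\mPSGgN(B_T)$ supplied by Corollary \ref{BTCOMP}. Since the integrand and the normalising denominator are the only quantities that change between the two statements, the argument is fundamentally one of rescaling the error term.

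Concretely, I would first write
\begin{equation*}
\frac{1}{\mPSGgN(B_T)}\int_{-T}^T f(\Gamma g n_t)\,dt = \frac{2T}{\mPSGgN(B_T)}\cdot\frac{1}{2T}\int_{-T}^T f(\Gamma g n_t)\,dt,
\end{equation*}
and then, for $\Gamma g\in\Omega\cap\GaG_{\mathrm{rad}}$ and $T\gg_{\Omega}1$, apply Corollary \ref{BTCOMP} to obtain
\begin{equation*}
\frac{2T}{\mPSGgN(B_T)}\ll_{\Omega}\frac{T}{T^{\delGa}\sYGa(\Gamma g a_T)^{1-\delGa}}=\left(\frac{T}{\sYGa(\Gamma g a_T)}\right)^{1-\delGa}.
\end{equation*}
Substituting Theorem \ref{StromEfEq} for the remaining factor then yields three error contributions, each of which collapses via the identity $(T/\sYGa(\Gamma g a_T))^{1-\delGa}\cdot(\sYGa(\Gamma g a_T)/T)^{\beta}=(\sYGa(\Gamma g a_T)/T)^{\beta-(1-\delGa)}$. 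Taking $\beta=1-s_1$ gives exponent $\delGa-s_1$, taking $\beta=\tfrac{1}{2}$ gives exponent $\delGa-\tfrac{1}{2}$, and the logarithmic factor $\log^3(2+T/\sYGa(\Gamma g a_T))$ passes through unchanged. Matching these against the three summands in the conclusion completes the proof.

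There is no serious obstacle to this argument; the only thing worth flagging is that Theorem \ref{StromEfEq} is stated uniformly in $\Gamma g\in\GaG$ whereas the lower bound from Corollary \ref{BTCOMP} requires $\Gamma g\in\Omega\cap\GaG_{\mathrm{rad}}$ and $T\gg_{\Omega}1$, which is why the implied constant in the corollary depends on $\Omega$ (in addition to $\Gamma$). I would verify at the end that the dependence is exactly $O_{\Gamma,\Omega}$ as claimed, and that no further hypothesis on $f$ beyond $f\in\scrS^4(\GaG)\cap\scrH_1\cap\scrB^{\alpha}$ is needed, since $\scrH_1$-membership is the key input that allows Theorem \ref{StromEfEq} to be invoked (the base-eigenfunction contribution having been isolated separately in Theorem \ref{phiequi}).
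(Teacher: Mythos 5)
Your proposal is correct and coincides exactly with the paper's intended argument: the paper dispatches this corollary by remarking that it "follows from Theorem \ref{StromEfEq} (and Corollary \ref{BTCOMP}) in the same way that Corollary \ref{baseeffeqcor} follows from Theorem \ref{phiequi}," which is precisely your rescaling $\frac{2T}{\mPSGgN(B_T)}\ll_{\Omega}(T/\sYGa(\Gamma g a_T))^{1-\delGa}$ applied to each term. Your exponent arithmetic and your note that the $\Omega$-dependence (and the restriction to $\Gamma g\in\Omega\cap\GaG_{\mathrm{rad}}$, $T\gg_{\Omega}1$) enter solely through Corollary \ref{BTCOMP} are both accurate.
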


\section{Proof of Theorem \ref{mainthm}} \label{thm1proof}
Before proving our main result, Theorem \ref{mainthm}, we first recall the definition of the \emph{Burger-Roblin measure} associated to $N$ on $\GaG$, denoted $m_{\Gamma}^{\mathrm{BR}}$ (and referred to as the BR-measure for short). 
\subsection{The Burger-Roblin measure}\label{BRmsec} Using the Iwasawa decomposition $G=KAN$, we define a left $\Gamma$-invariant (cf.\ \eqref{nuprops}) and right $N$-invariant measure $\widetilde{m}_{\Gamma}^{\mathrm{BR}}$ on $G$ by
\begin{equation*}
\widetilde{m}_{\Gamma}^{\mathrm{BR}}(f)=\int_{KAN} f(ka_yn_x) y^{\delGa-1}\,dx\,dy\,d\nu_i^{(\Gamma,i)}(k\cdot \infty)\qquad \forall f\in C_c(G).
\end{equation*}
We may also express this in terms of the Patterson-Sullivan and Lebesgue densities as follows: firstly, observe that the map
\begin{equation*}
g \mapsto \big([g]^+,[g]^-, \beta_{[g]^+}(i,g\cdot i)\big)
\end{equation*} 
is a bijection from $G$ to $\left((\partial_{\infty}\HH\times \partial_{\infty}\HH)\setminus \lbrace (u,u)\,:\, u\in \partial_{\infty}\HH\rbrace\right) \times \RR$. We may then write the BR-measure as
\begin{equation*}
\widetilde{m}_{\Gamma}^{\mathrm{BR}}(f)=\int_G f(g) e^{\delGa \beta_{[g]^+}(i,g\cdot i)}e^{\beta_{[g]^-}(i,g\cdot i)}\,dm_i([g]^-)\,d\nu_i^{(\Gamma,i)}([g]^+)\,dr\qquad \forall f\in C_c(G),
\end{equation*}
where $r= \beta_{[g]^+}(i,g\cdot i)$. In a similar manner, we define the so-called $\mathrm{BR_*}$-measure $\widetilde{m}_{\Gamma,N}^{\mathrm{BR_*}}$ on $G$ by
\begin{equation*}
\widetilde{m}_{\Gamma}^{\mathrm{BR_*}}(f)=\int_G f(g) e^{\delGa \beta_{[g]^-}(i,g\cdot i)}e^{\beta_{[g]^+}(i,g\cdot i)}\,dm_i([g]^+)\,d\nu_i^{(\Gamma,i)}([g]^-)\,ds\qquad \forall f\in C_c(G),
\end{equation*}
where $s= \beta_{[g]^-}(i,g\cdot i)$. Observe that $\widetilde{m}_{\Gamma}^{\mathrm{BR_*}}$ is right $U$-invariant, where $U$ is the subgroup of $G$ defined by
\begin{equation*}
U=\lbrace n^*_u=\smatr{1}{0}{u}{1}\,:\, u\in\RR\rbrace.
\end{equation*}
The surjective map $\pi:C_c(G)\rightarrow C_c(\GaG)$ given by $[\pi(f)](\Gamma g):=\sum_{\gamma \in \Gamma} f(\gamma g)$ allows us to then define the measure $m_{\Gamma}^{\mathrm{BR}}$ on $\GaG$ by
\begin{equation*}
m_{\Gamma}^{\mathrm{BR}}\big(\pi(f)\big):=\widetilde{m}_{\Gamma}^{\mathrm{BR}}(f)\qquad \forall f\in C_c(G)
\end{equation*} 
(the left $\Gamma$-invariance of $\widetilde{m}^{\mathrm{BR}}_{\Gamma}$ ensures that $m^{\mathrm{BR}}_{\Gamma}$ is well-defined). The measure $m_{\Gamma,N}^{\mathrm{BR_*}}$ is defined in a completely analogous way. Note that both $m_{\Gamma}^{\mathrm{BR}}$ and $m_{\Gamma,N}^{\mathrm{BR_*}}$ are \emph{infinite} measures on $\GaG$.
\subsection*{Proof of Theorem \ref{mainthm}} 
Without loss of generality, we may assume that $ 1- \delGa\leq \alpha < \frac{1}{2}$. We now write $f$ as the orthogonal sum $f=f_0+f_1$, where $f_0\in\scrC_{0}\cap \scrS^4(\GaG)$ and $f_1\in\scrH_1\cap \scrS^4(\GaG)$. By Lemma \ref{Compbdd}, $f_0\in \scrB_{\alpha}$, hence $f_1=f-f_0\in\scrB_{\alpha}$. This allows us to apply Corollary \ref{Hs1PSeq} to $f_1$, which, after noting that $\|f_1\|_{\scrS^4(\GaG)}\leq \|f\|_{\scrS^4(\GaG)}$ and $\|f_1\|_{N^{\alpha}}=\|f-f_0\|_{N^{\alpha}}\leq \|f\|_{N^{\alpha}}+\|f\|_{N^{\alpha}}\ll_{\Gamma} \|f_0\|_{\scrS^4(\GaG)}+\|f\|_{N^{\alpha}}\leq \|f\|_{\scrS^4(\GaG)}+\|f\|_{N^{\alpha}} $, gives
\begin{align}\label{f1int}
\frac{1}{\mPSGgN(B_T)}\int_{-T}^T f(\Gamma g n_t)\,dt=&\frac{1}{\mPSGgN(B_T)}\int_{-T}^T f_0(\Gamma g n_t)\,dt
\\\notag&O_{\Gamma,\Omega}\Bigg(\|f\|_{\scrS^4(\GaG)}\left(\sfrac{\sYGa(\Gamma g a_T)}{T}\right)^{\delGa-s_1}+ \|f\|_{N_{\alpha}}\left(\sfrac{\sYGa(\Gamma g a_T)}{T}\right)^{\delGa-\frac{1}{2}}
\\\notag&\quad\qquad+\|f\|_{\scrS^4(\GaG)}\left(\sfrac{\sYGa(\Gamma g a_T)}{T}\right)^{\delGa-\frac{1}{2}}\log^3\left(2+\sfrac{T}{\sYGa(\Gamma g a_T)}\right)\Bigg).
\end{align}
To complete the proof, it now suffices to prove that
\begin{equation*}
\frac{1}{\mPSGgN(B_T)}\int_{-T}^T f_0(\Gamma g n_t)\,dt=\sfrac{m_{\Gamma}^{\mathrm{BR}}(f)}{m_{\Gamma}^{\mathrm{BMS}}(\GaG)}+O_{\Gamma,\Omega}\big(\|f\|_{\scrS^4(\GaG)} T^{\frac{1}{2}-\delGa}\big). 
\end{equation*}
We observe that $f_0= \sum_{n\in\ZZ} \langle f,\phi_n\rangle_{L^2(\GaG)}\phi_n$. Using Proposition \ref{Yprops} \textit{(1)}, Lemma \ref{Compbdd}, and the bound $|\phi_n(\Gamma h)|\ll \phi_0(\Gamma h)$, we have
\begin{align*}
 \sum_{n\in\ZZ} |\langle f,\phi_n\rangle_{L^2(\GaG)}\phi_n(\Gamma g n_t)|\ll &\sYGa(\Gamma g n_t)^{1-\delGa}\sum_{n\in\ZZ} |\langle f,\phi_n\rangle_{L^2(\GaG)}|\\&\ll (1+T)^{2-2\delGa}\sYGa(g)^{1-\delGa} \|f\|_{\scrS^1(\GaG)}\qquad \forall t\in B_T, \Gamma g\in \GaG.
\end{align*}
This permits us to write $\int_{-T}^T f_0(\Gamma g n_t)\,dt=\sum_{n\in\ZZ}\langle f,\phi_n\rangle_{L^2(\GaG)} \int_{-T}^T \phi_n(\Gamma g n_t)\,dt$, and so 
Corollary \ref{baseeffeqcor} gives
\begin{align}\label{f0int}
\frac{1}{\mPSGgN(B_T)}&\int_{-T}^T f_0(\Gamma g n_t)\,dt=\sum_{n\in\ZZ} \langle f,\phi_n\rangle_{L^2(\GaG)} \int_{-T}^T \phi_n(\Gamma g n_t)\,dt
\\\notag&=\sum_{n\in\ZZ} \langle f,\phi_n\rangle_{L^2(\GaG)} \left( \scrN_{\Gamma}c_n(\delGa)\kappa_{n}(\delGa)+O_{\Gamma,\Omega}\left( T^{\frac{1}{2}-\delGa}\right)\right)
\\\notag&=\left(\sum_{n\in\ZZ}  \scrN_{\Gamma}c_n(\delGa)\kappa_{n}(\delGa) \langle f,\phi_n\rangle_{L^2(\GaG)} \right) +O_{\Gamma,\Omega}\left( T^{\frac{1}{2}-\delGa}\left( \sum_{n\in\ZZ}|\langle f,\phi_n\rangle_{L^2(\GaG)} |\right)\right)
\\\notag&=\scrM_{\Gamma}(f)+O_{\Gamma,\Omega}\left( T^{\frac{1}{2}-\delGa}\|f\|_{\scrS^1(\GaG)}\right)
\end{align}
(cf.\ \eqref{Mdef}).

Now, \eqref{f1int} and \eqref{f0int} show that $ \lim_{T\rightarrow \infty} \frac{1}{\mPSGgN(B_T)}\int_{-T}^T f_0(\Gamma g n_t)\,dt=\scrM_{\Gamma}(f)$. However, \cite[Theorem 1.5]{MoOh} or \cite[Theorem 1.1]{MauSchap} gives $\frac{1}{\mPSGgN(B_T)}\int_{-T}^T \psi(\Gamma g n_t)\,dt=\sfrac{m_{\Gamma}^{\mathrm{BR}}(\psi)}{m_{\Gamma}^{\mathrm{BMS}}(\GaG)}$ for all $\psi\in C_c(\GaG)$ (note that \emph{both} $\mPSGgN$ and $m_{\Gamma}^{\mathrm{BR}}$ are scaled with a factor $\scrN_{\Gamma}$ compared with those of \cite{MoOh}-this enables us to use the cited result). Observing that $|\scrM_{\Gamma}(f)|\ll_{\Gamma} \|f\|_{\scrS^1(\GaG)}$, we obtain the claimed extension of $f\mapsto m_{\Gamma}^{\mathrm{BR}}(f)$.

\hspace{425.5pt}\qedsymbol

\begin{remark}
Since $C_c^{\infty}(\GaG)\subset L^1(\GaG,m_{\Gamma}^{\mathrm{BR}})\cap\scrS^1(\GaG)$, we obtain the following identity for the BR-measure:
\begin{equation}\label{mBRident}
m_{\Gamma}^{\mathrm{BR}}(f)=m_{\Gamma}^{\mathrm{BMS}}(\GaG)\scrM_{\Gamma}(f)=\sum_{n\in \ZZ} \frac{m_{\Gamma}^{\mathrm{BMS}}(\GaG)\scrN_{\Gamma}\kappa_0(\delGa)}{c_n(\delGa)} \langle f, \phi_n\rangle_{L^2(\GaG)}\quad \forall f\in C_c^{\infty}(\GaG).
\end{equation}
A similar identity is obtained in \cite[Theorem 7.3]{LeeOh}. At a first glance, our formula appears to be different from that given in \cite{LeeOh}; the identities do not appear to give the same value even up to scaling. A closer inspection reveals that this is due to a small typo in \cite{LeeOh}: in the case $n=2$, the formula given in \cite[Theorem 4.6]{LeeOh} should read
\begin{equation*}
\phi_{\mathit{l}}^N(a_y)=c_2(0)\sfrac{\sqrt{\Gamma(\delta)\Gamma(1-\delta+l)}}{\sqrt{\Gamma(1-\delta)\Gamma(\delta+l)}}y^{1-\delta}.
\end{equation*}
After making a subsequent correction to \cite[(6.1), p.\ 610]{LeeOh}, it is straightforward to verify that \eqref{mBRident} agrees with \cite[Theorem 7.3]{LeeOh} (at least up to scaling).
\end{remark}
\section{Convex-Cocompact $\GaG$}\label{deltalessthan1/2}
We will now restrict our attention to convex cocompact $\Gamma$ and demonstrate how one can deduce effective equidistribution of non-closed horocycles from the exponential mixing of the diagonal action with respect to the \emph{Bowen-Margulis-Sullivan measure} (abbreviated as the BMS-measure) \emph{without the assumption that} $\delGa>\frac{1}{2}$. As such, throughout this section $\Gamma$ is non-elementary and convex cocompact. As previously noted, if $\delGa\leq \frac{1}{2}$ then $\Gamma$ is necessarily convex cocompact. 

\subsection{Exponential mixing}
The key result which we need is \emph{exponential mixing} of the diagonal subgroup of $G$. This was first obtained by Stoyanov with respect to the BMS-measure for convex cocompact $\Gamma$ \cite{Stoyanov}. In \cite[Section 5.2]{OhWinter}, Oh and Winter show how to obtain an exponential mixing statement for the Haar measure from that for the BMS-measure. It is this result that will be the main ingredient in the proof of Theorem \ref{mainthm2}.

Before giving the precise statement, we recall some of the terminology introduced in Section \ref{intro}: for $\Omega\subset \GaG$, we let $\scrS^m(\Omega)$ denote the closure of $\lbrace f\in C_c^{\infty}(\Omega)\,:\, f|_{\partial\Omega}=0\rbrace$ with respect to the norm $\|\cdot\|_{\scrS^m(\GaG)}$. Similarly, we let $\|\cdot\|_{W^m}$ denote the standard $L^2$-Sobolev norm of order $m$ on $\RR$, and for an interval $I\subset \RR$, we let $W^m(I)$ denote the closure of $\lbrace C^{\infty}_c(I)\,:\, f|_{\partial I}=0\rbrace$ with respect to $\|\cdot\|_{W^m}$.

Combining \cite[Corollary 1.5]{Stoyanov} with \cite[Theorem 5.8]{OhWinter} gives

\begin{thm}\label{expmixing}
There exists $\eta_0>0$ such that for any compact subset $\Omega\subset \GaG$,
\begin{equation*}
\int_{\GaG}\!\! f_1(\Gamma g a_y) f_2(\Gamma g)\,d\mu_{\GaG}(\Gamma g)\!=\!\sfrac{m_{\Gamma}^{\mathrm{BR}}(f_1)m_{\Gamma}^{\mathrm{BR_*}}(f_2)}{m_{\Gamma}^{\mathrm{BMS}}(\GaG)}y^{1-\delGa} \!+O_{\Gamma,\Omega}\!\left(\!y^{1-\delGa +\eta_0}\|f_1\|_{\scrS^3(\GaG)}\|f_2\|_{\scrS^3(\GaG)} \!\right)
\end{equation*}  
for all $0<y\leq 1$, $f_1,\,f_2\in \scrS^3(\Omega)$.
\end{thm}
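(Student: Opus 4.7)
The plan is to reduce this statement to Stoyanov's exponential mixing of the geodesic flow with respect to the BMS measure (which is already stated for convex-cocompact $\Gamma$ in \cite[Corollary 1.5]{Stoyanov}), by carrying out the smearing/thickening argument of Oh-Winter \cite[Theorem 5.8]{OhWinter}. Stoyanov's theorem furnishes some $\eta > 0$ and an integer $k$ such that, for any compact $\Omega\subset\GaG$ and all $\psi_1,\psi_2 \in \scrS^k(\Omega)$,
\[
\int_{\GaG}\psi_1(\Gamma g a_y)\psi_2(\Gamma g)\,dm_\Gamma^{\mathrm{BMS}}(\Gamma g) \;=\; \frac{m_\Gamma^{\mathrm{BMS}}(\psi_1)\,m_\Gamma^{\mathrm{BMS}}(\psi_2)}{m_\Gamma^{\mathrm{BMS}}(\GaG)} + O_{\Gamma,\Omega}\!\left(y^{\eta}\|\psi_1\|_{\scrS^k}\|\psi_2\|_{\scrS^k}\right)
\]
for all $0<y\leq 1$. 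The main task is to pass from BMS-mixing to Haar-mixing, and to see the BR and $\mathrm{BR}_*$ measures emerge naturally on the right-hand side.

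The passage is done by working in local Hopf-type coordinates. Letting $U = \{n_u^* = \smatr{1}{0}{u}{1}\}$ be the opposite horocyclic subgroup, the map $(u,s,n)\mapsto \Gamma g u a_{e^s} n$ is a diffeomorphism of a small box onto a neighborhood of $\Gamma g\in\Omega$, and the four measures $\mu_G$, $m_\Gamma^{\mathrm{BMS}}$, $m_\Gamma^{\mathrm{BR}}$, $m_\Gamma^{\mathrm{BR}_*}$ all disintegrate as explicit products of Lebesgue or Patterson-Sullivan along the $U$- and $N$-directions, twisted by Busemann weights $e^{\cdot\beta_+}$ and $e^{\cdot\beta_-}$. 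I would then fix bump functions $\rho^N_\epsilon,\rho^U_\epsilon$ supported on $\epsilon$-neighborhoods of the identity in $N$ and $U$, normalized against Lebesgue, and introduce the thickenings
\[
\widetilde{f_1}(x) = \int_N f_1(xn)\rho^N_\epsilon(n)\,dn, \qquad \widetilde{f_2}(x) = \int_U f_2(xu)\rho^U_\epsilon(u)\,du.
\]
A direct unfolding in the coordinates above, using the relations $d\mu_G / dm^{\mathrm{BMS}}$, $dm^{\mathrm{BR}}/dm^{\mathrm{BMS}}$, $dm^{\mathrm{BR}_*}/dm^{\mathrm{BMS}}$ (all expressible through the Radon-Nikodym derivatives between Lebesgue and PS densities on $\partial_\infty\HH$), lets one write
\[
\int_{\GaG} f_1(\Gamma g a_y)f_2(\Gamma g)\,d\mu_G = y^{1-\delGa}\!\int_{\GaG} \widetilde{f_1}(\Gamma g a_y)\widetilde{f_2}(\Gamma g)\,dm_\Gamma^{\mathrm{BMS}} + \mathrm{Err}_{\mathrm{loc}},
\]
where the prefactor $y^{1-\delGa}$ appears because, under $a_y$ with $y$ small, the $N$-bump at $\Gamma g a_y$ is contracted by a factor $y$, and the conformal mismatch between Lebesgue and PS in this contracted neighborhood yields $y^{(1-\delGa)+(1-\delGa)}$ while an overall $y$ is absorbed by the $A$-Jacobian. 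The local error $\mathrm{Err}_{\mathrm{loc}}$ is controlled by $O(\epsilon\,\|f_1\|_{\scrS^3}\|f_2\|_{\scrS^3})$ thanks to Hölder-type bounds on $f_1,f_2$ in the transverse directions (using Sobolev bounds). Applying Stoyanov's BMS mixing to $\widetilde{f_1},\widetilde{f_2}$ and observing that
\[
m_\Gamma^{\mathrm{BMS}}(\widetilde{f_1}) \;\to\; m_\Gamma^{\mathrm{BR}}(f_1), \qquad m_\Gamma^{\mathrm{BMS}}(\widetilde{f_2}) \;\to\; m_\Gamma^{\mathrm{BR}_*}(f_2)
\]
up to an $O(\epsilon)$ error (again from the same conformal bookkeeping), produces the claimed main term together with an error
\[
O\bigl(\epsilon\,\|f_1\|_{\scrS^3}\|f_2\|_{\scrS^3} + y^{1-\delGa+\eta}\epsilon^{-C}\|f_1\|_{\scrS^3}\|f_2\|_{\scrS^3}\bigr),
\]
where the $\epsilon^{-C}$ accounts for $\scrS^3$-norms of the thickenings. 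Choosing $\epsilon = y^{\eta/(C+1)}$ balances the two errors and gives the stated $y^{1-\delGa+\eta_0}$ for a suitable $\eta_0>0$.

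The main obstacle is not conceptual but bookkeeping: carefully matching the Busemann/conformal weights across the four measures so that the thickened BMS main term becomes exactly $m_\Gamma^{\mathrm{BR}}(f_1)\,m_\Gamma^{\mathrm{BR}_*}(f_2)/m_\Gamma^{\mathrm{BMS}}(\GaG)$, and checking that the thickenings $\widetilde{f_1},\widetilde{f_2}$ lie in $\scrS^k(\Omega')$ for some slightly larger compact $\Omega'$ with $\scrS^k$-norms controlled by the $\scrS^3$-norm of $f_i$ and a polynomial power of $\epsilon^{-1}$. The use of the convex-cocompactness of $\Gamma$ is essential here, since it ensures that the thickenings stay in a compact subset of $\GaG$ (there is no need to worry about cuspidal excursions) and that Stoyanov's result applies uniformly.
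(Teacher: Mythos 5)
The paper's ``proof'' of this theorem is a one-line citation: it combines Stoyanov's exponential BMS-mixing \cite[Corollary 1.5]{Stoyanov} with Oh--Winter's translation to Haar mixing \cite[Theorem 5.8]{OhWinter}, plus the remark that (i) one interchanges $m_\Gamma^{\mathrm{BR}}$ and $m_\Gamma^{\mathrm{BR_*}}$ and replaces $y\to\infty$ by $y\to 0$ via $G$-invariance of $\mu_{\GaG}$, and (ii) Oh--Winter's $C^1$-error term is converted to an $\scrS^3$-error via the Sobolev embedding. You are instead attempting to reconstruct the content of Oh--Winter's Theorem 5.8, which is a more ambitious (and potentially more illuminating) route --- but as written there is a concrete gap in the reconstruction.

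The problematic step is the assertion that, with the Lebesgue-normalized bumps you define,
\[
m_\Gamma^{\mathrm{BMS}}(\widetilde{f_1})\to m_\Gamma^{\mathrm{BR}}(f_1),\qquad m_\Gamma^{\mathrm{BMS}}(\widetilde{f_2})\to m_\Gamma^{\mathrm{BR_*}}(f_2)
\]
as $\epsilon\to 0$. Since $\widetilde{f_i}\to f_i$ uniformly as $\epsilon\to 0$ (the $f_i$ are $C^1$, being in $\scrS^3(\Omega)$), dominated convergence gives $m_\Gamma^{\mathrm{BMS}}(\widetilde{f_i})\to m_\Gamma^{\mathrm{BMS}}(f_i)$, which is not $m_\Gamma^{\mathrm{BR}}(f_1)$ (nor $m_\Gamma^{\mathrm{BR_*}}(f_2)$). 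Making the BR and $\mathrm{BR_*}$ measures appear is precisely the hard part of Oh--Winter's Section~5: the Haar measure and the BMS measure are mutually singular (so ``$d\mu_G/dm^{\mathrm{BMS}}$'' has no literal meaning), and the mismatch between Lebesgue and Patterson--Sullivan along the $N$- and $U$-directions has to be tracked through the flow-box unfolding, producing the $y^{1-\delGa}$ prefactor exactly because the PS measure of an interval of length $y$ scales like $y^{\delGa}$ while Lebesgue scales like $y$. Related to this, your heuristic ``$y^{(1-\delGa)+(1-\delGa)}$ with one $y$ absorbed by the $A$-Jacobian'' gives $y^{1-2\delGa}$, not $y^{1-\delGa}$, another sign that the Busemann/conformal bookkeeping in the unfolding has not been done precisely. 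Oh--Winter in fact pass through an intermediate step (equidistribution of PS measures on expanding horospheres towards $m_\Gamma^{\mathrm{BR}}$, following Roblin), rather than the single-shot identity $\int f_1(xa_y)f_2(x)\,d\mu_G = y^{1-\delGa}\int\widetilde{f_1}(xa_y)\widetilde{f_2}(x)\,dm^{\mathrm{BMS}} + \text{Err}$ you posit; that identity as stated cannot hold, since as $\epsilon\to 0$ the right-hand side tends to $y^{1-\delGa}\int f_1(xa_y)f_2(x)\,dm^{\mathrm{BMS}}$, which bears no general relation to the Haar integral on the left. If you want a from-scratch argument here you should either cite \cite[Theorem 5.8]{OhWinter} as the paper does, or carry out the two-step Roblin/Oh--Winter scheme carefully, keeping track of which normalizing factors of $\epsilon$ and $y$ appear at each stage.
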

\begin{remark}
Observe that $y\rightarrow\infty$ in \cite[Theorem 5.8]{OhWinter}. Using the $G$-invariance of $\mu_{\GaG}$ and the fact that our definitions of $m_{\Gamma}^{\mathrm{BR}}$ and $m_{\Gamma}^{\mathrm{BR_*}}$ are interchanged compared with those in \cite{OhWinter}, we obtain the main term stated here. To obtain our error term from that of \cite[Theorem 5.8]{OhWinter}, we simply use the Sobolev inequality $\|f\|_{C^1}\ll_{\Gamma} \|f\|_{\scrS^3(\GaG)}$ (cf.\ Lemma \ref{Sobbdd}).
\end{remark}

\subsection{Effective equidistribution of expanding translates}\label{translatessec} Since $\Gamma$ is convex cocompact, there is a uniform lower bound on the injectivity radius at each point of $\GaG$. This allows us to deduce the effective equidistribution of non-closed horocycles from the effective equidistribution of \emph{expanding translates} of compact pieces of horospherical orbits. This result in turn follows from the exponential mixing of the diagonal subgroup via the classical ``Margulis thickening trick" see e.g.\ Kleinbock and Margulis \cite[Proposition 2.4.8]{KleinMarg} for the proof in the general finite-volume setting. 

For infinite volume $\GaG$, the result we require is due to Mohammadi and Oh \cite[Theorem 5.13]{MohammadiOh}. The main complication compared with the finite volume setting is that the Lebesgue and Haar measures can (in general) give much greater mass to subsets than those given by the PS- and BR-measures. One must thus avoid bounding any approximations of functions until after making use of the exponential mixing from Theorem \ref{expmixing}. Since there are slight variations in our notation and setting compared with \cite{MohammadiOh} (as well as the fact that we will also require similar estimates in the proof of Theorem \ref{mainthm2}), we closely follow \cite[Section 5]{MohammadiOh} and reproduce the key steps of their proof. We refer the reader to \cite[Section 5]{MohammadiOh} for more details.

We start by recalling the Bruhat $NAU$ decomposition of $G$: $NAU$ is an open neighbourhood of the identity in $G$ and $G=\overline{NAU}$ (cf.\ \cite[Proposition 8.45]{Knapp2}). This allows us to make the following decomposition of the $\mathrm{BR_*}$-measure (cf.\ \cite[(5.3), p.\ 868]{MohammadiOh}):
\begin{lem}\label{measdecomp}
Let $B_1\subset N$, $B_2\subset A$, $B_3\subset U$ be open neighbourhoods of the identity (in the respective subgroups) and let $g\in G$. Then for any $f\in C_c(G)$ with $\mathrm{supp}(f)\subset g B_1B_2B_3$,
\begin{equation*}
\widetilde{m}^{\mathrm{BR_*}}_{\Gamma}(f)=\int_{\lbrace n_x\in B_1\rbrace}\int_{\lbrace a_y\in B_2\rbrace}\int_{\lbrace n^*_u\in B_3\rbrace}f(g n_x a_y n^*_u)\,e^{\delGa\beta_{[gn_x]^-}(i,gn_x\cdot i)} y^{1-\delGa}\, du\,dy\,d\nu_i^{(\Gamma,i)}([gn_x]^-).
\end{equation*}   
\end{lem}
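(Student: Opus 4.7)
The plan is to carry out a direct change-of-variables computation that converts the definition of $\widetilde{m}^{\mathrm{BR_*}}_{\Gamma}$ from Hopf-type coordinates $([g]^+, [g]^-, s)$ to the Bruhat coordinates $g = g_0 n_x a_y n^*_u$. The key structural observation driving the whole calculation is that both $n^*_u$ and $a_y$ fix $0 \in \partial_{\infty}\HH$, so
\[
[g_0 n_x a_y n^*_u]^- \;=\; g_0 n_x \cdot 0 \;=\; [g_0 n_x]^-,
\]
which depends on $x$ alone. This is precisely why the $\nu$-factor in the asserted formula involves only $[g_0 n_x]^-$ and the other two Bruhat variables are integrated against Lebesgue measure.

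I would first decompose the Busemann cocycle using its cocycle property and $G$-equivariance:
\[
\beta_{[g]^-}(i, g\cdot i) \;=\; \beta_{[g_0 n_x]^-}(i, g_0 n_x \cdot i) \;+\; \beta_0(i, a_y n^*_u \cdot i),
\]
and compute the second term explicitly from $a_y n^*_u \cdot i = (yu + iy)/(1+u^2)$ together with the formula $\beta_0(z_1, z_2) = \log(\Im(z_2)|z_1|^2) - \log(\Im(z_1)|z_2|^2)$. An analogous calculation handles $\beta_{[g]^+}(i, g\cdot i)$, which is more involved since $[g]^+$ depends on all three of $x, y, u$; concretely one finds (for $g_0 = e$) that $[g]^+ = x + y/u$.

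The main technical step is to perform the Jacobian computation for the change of variables from $([g]^+, s)$ to $(y, u)$ at fixed $x$ (hence fixed $[g]^-$); a direct partial-derivative calculation gives the determinant in terms of $u$ alone. I would then combine four ingredients: the two exponential weights $e^{\delGa \beta_{[g]^-}(i, g i)}$ and $e^{\beta_{[g]^+}(i, g i)}$ in the integrand, the Lebesgue density $dm_i([g]^+) = d[g]^+/(1+([g]^+)^2)$, and the Jacobian. The rational factors $u^2$, $1 + (x + y/u)^2$, and the powers of $y/(1+u^2)$ arising from the various pieces should cancel in a prescribed way, leaving the factor $y^{1-\delGa}\, du\, dy$ multiplied by the $x$-dependent Busemann term $e^{\delGa \beta_{[g_0 n_x]^-}(i, g_0 n_x \cdot i)}$, which is exactly the asserted decomposition.

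The main obstacle will be the bookkeeping: several competing rational expressions in $x, y, u$ appear from the explicit forms of the visual points, the Busemann cocycles, and the Jacobian, and one must verify their exact cancellation to recover a clean power of $y$. As a sanity check, the final formula is manifestly invariant under $u \mapsto u + v$ (since the weight depends only on $x$ and $y$ while $du$ is Lebesgue), matching the known right-$U$-invariance of $\widetilde{m}^{\mathrm{BR_*}}_{\Gamma}$.
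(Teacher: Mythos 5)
Your approach is the same as the paper's: both pass from the Hopf-coordinate description of $\widetilde{m}_{\Gamma}^{\mathrm{BR_*}}$ to the Bruhat $NAU$ parametrization, using the observation that $[g_0 n_x a_y n^*_u]^- = [g_0 n_x]^-$, the cocycle identity for $\beta$, and the conformal transformation rule for the Lebesgue density $\{m_z\}$. The one stylistic difference is that you propose computing the $2\times 2$ Jacobian of $(y,u)\mapsto(s,[g]^+)$ at fixed $x$ explicitly, whereas the paper sidesteps an explicit Jacobian by rewriting $e^{\beta_{[g]^+}(i,g\cdot i)}\,dm_i([g]^+)$ as $e^{\beta_{1/u}(i,n^*_u\cdot i)}\,dm_i(1/u)$ through $G$-equivariance of $m$ and its cocycle; these are equivalent computations, and your plan is sound.

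One warning before you push through the bookkeeping: the computation does not appear to produce $y^{1-\delGa}$. With $[g]^+ = x+y/u$ one gets $e^{\beta_{[g]^+}(i,g\cdot i)} = u^2\bigl(1+([g]^+)^2\bigr)/y$ and $\bigl|\det \partial(s,[g]^+)/\partial(y,u)\bigr| = 1/u^2$, so $e^{\beta_{[g]^+}(i,g\cdot i)}\,dm_i([g]^+)\,ds$ pulls back to $du\,dy/y$ rather than to $y\,du\,dy$ as the paper's last displayed step asserts; combined with $e^{\delGa s} = (1+x^2)^{\delGa}y^{-\delGa}$ this yields $y^{-\delGa-1}\,du\,dy$, not $y^{1-\delGa}\,du\,dy$. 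Two independent sanity checks agree with $y^{-\delGa-1}$: at $\delGa=1$ one has $\nu_i^{(\Gamma,i)}=m_i$ and $\widetilde{m}_{\Gamma}^{\mathrm{BR_*}}$ is Haar, whose $NAU$ density is $y^{-2}\,dx\,dy\,du$; and the quasi-invariance $\widetilde{m}_{\Gamma}^{\mathrm{BR_*}}\bigl(f(\,\cdot\,a_{y_0})\bigr) = y_0^{\delGa-1}\,\widetilde{m}_{\Gamma}^{\mathrm{BR_*}}(f)$ (which Theorem \ref{expmixing} forces) is reproduced by $y^{-\delGa-1}$ and by no other power. So the exponent in the statement, and in the paper's own last step, seems to be off by a factor $y^2$. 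This is harmless in the only place the lemma is used --- in Theorem \ref{translates} one has $|\log y|\ll\epsilon$, so $y^{1-\delGa}$ and $y^{-\delGa-1}$ are both $1+O_{\Gamma}(\epsilon)$ --- but if you execute your plan as written you should expect to land on $y^{-\delGa-1}$, not the stated formula.
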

\begin{proof}
Using the definition from Section \ref{BRmsec}:
\begin{align*}
\widetilde{m}_{\Gamma}^{\mathrm{BR_*}}(f)=\int_G f(h) e^{\delGa s}e^{\beta_{[h]^+}(i,h\cdot i)}\,ds\,dm_i([h]^+)\,d\nu_i^{(\Gamma,i)}([h]^-)
\end{align*}
where $s= \beta_{[h]^-}(i,h\cdot i)$. Writing $h=gn_xa_yn^*_u$, we observe that 
\begin{align*}
&[gn_x a_y n^*_u]^-=[gn_x]^-
\\&s=\beta_{[gn_x]^-}(i,gn_x a_y n_u^*\cdot_i)=\beta_{[gn_x]^-}(i,gn_x\cdot_i)+\beta_{0}(i,a_y n^*_u\cdot_i)=\beta_{[gn_x]^-}(i,gn_x\cdot_i)-\log y
\\&e^{\beta_{[gn_xa_yn^*_u]^+}(i,gn_xa_yn^*_u\cdot i)}\,dm_i([gn_xa_yn^*_u]^+)=e^{\beta_{\frac{1}{u}}((gn_xa_y)^{-1}\cdot i,n^*_u\cdot i)}\,dm_{(gn_x a_y)^{-1}\cdot i}(\sfrac{1}{u})
\\&\quad=e^{\beta_{\frac{1}{u}}((gn_xa_y)^{-1}\cdot i,n^*_u\cdot i)}e^{-\beta_{\frac{1}{u}}((gn_xa_y)^{-1}\cdot i, i)}\,dm_{ i}(\sfrac{1}{u})=e^{\beta_{\frac{1}{u}}(i,n_{u}^*\cdot i)}\,dm_{ i}(\sfrac{1}{u})
\\&\quad=(u^2+1)\frac{d(\sfrac{1}{u})}{1+\frac{1}{u^2}}.
\end{align*}
This gives $e^{\beta_{\frac{1}{u}}(i,n_{u}^*\cdot i)}\,dm_{ i}(\sfrac{1}{u}) \,ds= y \,du\,dy$, and so
\begin{align*}
\widetilde{m}_{\Gamma}^{\mathrm{BR_*}}(f)=&\!\!\int\limits_{\lbrace g n_x a_y n^*_u\in gB_1B_2B_3\rbrace}\!\!\! \!\!\!\!f(g n_x a_y n^*_u)\,e^{\delGa\beta_{[gn_x]^-}(i,gn_x\cdot i)}y^{-\delGa}e^{\beta_{\frac{1}{u}}(i,n_{u}^*\cdot i)}\,dm_{ i}(\sfrac{1}{u})\, ds\,d\nu_i^{(\Gamma,i)}([gn_x]^-)
\\&=\iiint\limits_{ B_1 B_2 B_3}f(g n_x a_y n^*_u)\,e^{\delGa\beta_{[gn_x]^-}(i,gn_x\cdot i)}y^{-\delGa}\, du\, y\,dy\,d\nu_i^{(\Gamma,i)}([gn_x]^-).
\end{align*}
\end{proof}
Let $\mathsf{dist}_G$ denote the Riemannian metric on $G$ induced from the Killing form on $\fg$ and $\scrB_r$ to denote the open ball of radius $r$ around the identity in $G$. The corresponding norm on $\fg$ is denoted by $|\cdot|$. We now choose $r_{\Gamma}\leq 1$ small enough so that the exponential map is surjective onto $\scrB_{r_{\Gamma}}$ and for each $\Gamma g\in \GaG$, the map from $\scrB_{r_{\Gamma}}$ to $\GaG$ given by $h\mapsto \Gamma g h$ is injective. 
\begin{lem} \label{diffbdd}
\begin{equation*}
|f(\Gamma g h)-f(\Gamma g)|\ll_{\Gamma} r \|f\|_{\scrS^3(\GaG)}\qquad \forall 0\leq r \leq r_{\Gamma},\; g\in G,\; h\in \scrB_{r},\; f\in \scrS^3(\GaG).
\end{equation*}
\end{lem}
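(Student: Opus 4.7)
The approach is to connect $g$ and $gh$ by a one-parameter subgroup and apply the fundamental theorem of calculus, reducing the claim to a pointwise bound on the Lie derivative of $f$ which is controlled by Lemma \ref{Sobbdd}.

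\emph{Step 1 (linearize $h$).} Since $h\in\scrB_r\subset\scrB_{r_{\Gamma}}$ and the exponential map $\exp\colon\fg\to G$ is surjective onto $\scrB_{r_{\Gamma}}$ by the choice of $r_{\Gamma}$, we can write $h=\exp(X)$ for some $X\in\fg$. For $r_{\Gamma}$ small the Riemannian distance $\mathsf{dist}_G(\exp(X),e)$ is comparable to $|X|$, so $|X|\ll r$. Expand $X=c_1X_1+c_2X_2+c_3X_3$ in the fixed basis of $\fg$ used to define $\|\cdot\|_{\scrS^m(\GaG)}$; then $|c_i|\ll r$.

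\emph{Step 2 (fundamental theorem of calculus).} Assume first that $f$ is smooth (the general case follows by density, as $\scrS^3(\GaG)$ is the closure of smooth functions). The map $t\mapsto f(\Gamma g\exp(tX))$ is then $C^{1}$ on $[0,1]$, and its derivative equals $(Xf)(\Gamma g\exp(tX))$, where $X$ acts as the left-invariant vector field it defines. Hence
\begin{equation*}
f(\Gamma gh)-f(\Gamma g)=\int_{0}^{1}(Xf)(\Gamma g\exp(tX))\,dt=\sum_{i=1}^{3}c_{i}\int_{0}^{1}(X_{i}f)(\Gamma g\exp(tX))\,dt.
\end{equation*}

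\emph{Step 3 (pointwise bound via Sobolev).} Apply Lemma \ref{Sobbdd} to each $X_{i}f\in\scrS^{2}(\GaG)$. Here we use the convex cocompactness assumption critically: the statement of Proposition \ref{Yprops} in the convex cocompact case gives $\sYGa\equiv 1$ on $\GaG$, so Lemma \ref{Sobbdd} yields
\begin{equation*}
|(X_{i}f)(\Gamma g')|\ll_{\Gamma}\|X_{i}f\|_{\scrS^{2}(\GaG)}\ll\|f\|_{\scrS^{3}(\GaG)}\qquad\forall\,\Gamma g'\in\GaG,
\end{equation*}
where the second inequality is immediate from the definition of $\|\cdot\|_{\scrS^{m}(\GaG)}$ as an $L^{2}$-sum of monomials of bounded order in the $X_{j}$.

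\emph{Step 4 (combine).} Inserting the pointwise bound into the integral representation and using $|c_i|\ll r$, we obtain
\begin{equation*}
|f(\Gamma gh)-f(\Gamma g)|\leq\sum_{i=1}^{3}|c_{i}|\int_{0}^{1}|(X_{i}f)(\Gamma g\exp(tX))|\,dt\ll_{\Gamma}r\,\|f\|_{\scrS^{3}(\GaG)}.
\end{equation*}
To remove the smoothness assumption on $f$, approximate in $\|\cdot\|_{\scrS^{3}(\GaG)}$ by smooth functions and pass to the limit using that both sides are continuous in this norm (the right-hand side trivially, the left-hand side again by Lemma \ref{Sobbdd}).

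There is no real obstacle here; the only point to verify carefully is the identification of $\tfrac{d}{dt}f(\Gamma g\exp(tX))$ with $(Xf)(\Gamma g\exp(tX))$ in the left-invariant vector-field convention underlying the Sobolev norms, and the appeal to $\sYGa\equiv 1$ which removes the height weight from Lemma \ref{Sobbdd}.
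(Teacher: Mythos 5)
Your proof is correct and follows essentially the same route as the paper: write $h=\exp(X)$ with $|X|\ll r$, apply the fundamental theorem of calculus along $t\mapsto\Gamma g\exp(tX)$, and invoke Lemma \ref{Sobbdd} (with $\sYGa\equiv 1$ since $\Gamma$ is convex cocompact) to bound the Lie derivative pointwise by $\|f\|_{\scrS^3(\GaG)}$. The only cosmetic difference is that you expand $X=\sum c_iX_i$ before applying the Sobolev bound while the paper bounds $\|Xf\|_{\scrS^2(\GaG)}\ll r\|f\|_{\scrS^3(\GaG)}$ afterward; and your attribution of $\sYGa\equiv 1$ to Proposition \ref{Yprops} is slightly off (this is stated in Section \ref{resultsec}, not in that proposition), but the fact itself is correct and used identically.
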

\begin{proof}
Given $h$ in such a $\scrB_r$, there exists $X\in\fg$ such that $h=\exp(X)$ and $|X|\ll r$. We then have
\begin{equation*}
|f(\Gamma g h)-f(\Gamma g)|\leq\int_0^1 |Xf\big(\Gamma g\exp(sX)\big)|\,ds\ll_{\Gamma} \|Xf\|_{\scrS^2(\GaG)}\ll r\|f\|_{\scrS^3(\GaG)}.
\end{equation*}
\end{proof}
We also let $\epsilon_{\Gamma}\leq r_{\Gamma}$ be small enough so that
\begin{equation*}
\left \lbrace n_xa_yn^*_u\,:\, \max\lbrace |x|, |\log y|, |u|\rbrace < \epsilon_{\Gamma}\right\rbrace \subset \scrB_{r_{\Gamma}/2}. 
\end{equation*}
\begin{thm}\label{translates}
There exists $\eta_1>0$ such that for any compact subset $\Omega\subset \GaG$, 
\begin{equation*}
\int_{-\epsilon_{\Gamma}}^{\epsilon_{\Gamma}} f(\Gamma g n_t a_y) \phi(t)\,dt=\!\sfrac{m_{\Gamma}^{\mathrm{BR}}(f)\mu_{\Gamma gN}^{\mathrm{PS}}(\phi)}{m_{\Gamma}^{\mathrm{BMS}}(\GaG)}y^{1-\delGa} \!\!+\!O_{\Gamma,\Omega}\!\left(\!y^{1-\delGa +\eta_1}\|f\|_{\scrS^3(\GaG)}\lbrace\|\phi\|_{W^3}\!+\!\mu_{\Gamma gN}^{\mathrm{PS}}(\phi)\rbrace \!\right)
\end{equation*}
for all $\Gamma g \in \Omega$, and non-negative $f\in \scrS^3(\Omega)$, $\phi\in C_c^{\infty}\big((-\epsilon_{\Gamma},\epsilon_{\Gamma})\big)$.
\end{thm}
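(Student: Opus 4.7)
The plan is the classical Margulis thickening argument, as carried out in \cite[Theorem 5.13]{MohammadiOh}, combining the exponential mixing of Theorem \ref{expmixing} with the Patterson-Sullivan decomposition of the $\mathrm{BR_*}$-measure from Lemma \ref{measdecomp}. The idea is to replace the one-dimensional horocycle integral on the left by a three-dimensional Haar integral over a thin tube of radius $\delta$ around $\Gamma g\cdot\{n_t:|t|<\epsilon_\Gamma\}$ in the transverse $AU$-directions, and to evaluate this Haar integral in two different ways: a direct change of variables yields the horocycle integral up to a thickening error, while exponential mixing yields the desired main term.

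Concretely, I fix a small $\delta>0$, pick a non-negative bump $\psi_\delta\in C_c^\infty((-\delta,\delta)^2)$ normalized so that $\iint\psi_\delta=1$ against the local Haar Jacobian in $NAU$-coordinates, and define $F_\delta\in\scrS^3(\GaG)$ by $F_\delta(\Gamma g n_t a_{e^s}n^*_u):=\phi(t)\psi_\delta(s,u)$ for $(t,s,u)\in(-\epsilon_\Gamma,\epsilon_\Gamma)\times(-\delta,\delta)^2$, extended by zero; this is well-defined by the injectivity condition built into $\epsilon_\Gamma$. On one hand, unfolding the Haar integral $I(y):=\int_{\GaG}F_\delta(\Gamma h)f(\Gamma h a_y)\,d\mu_{\GaG}(\Gamma h)$ in $NAU$-coordinates, applying the commutation relation $a_y^{-1}n^*_u a_y=n^*_{uy}$ to write $\Gamma g n_t a_{e^s}n^*_u a_y=\Gamma g n_t a_y\cdot a_{e^s}n^*_{uy}$, and invoking Lemma \ref{diffbdd} on the small element $a_{e^s}n^*_{uy}$ gives $I(y)=\int_{-\epsilon_\Gamma}^{\epsilon_\Gamma}\phi(t)f(\Gamma g n_t a_y)\,dt+O(\delta\|f\|_{\scrS^3(\GaG)}\|\phi\|_{W^3})$, where I absorb $\|\phi\|_{L^1}$ into $\|\phi\|_{W^3}$ via one-dimensional Sobolev embedding. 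On the other hand, Theorem \ref{expmixing} applied to $F_\delta$ and $f$, together with the computation $m_\Gamma^{\mathrm{BR_*}}(F_\delta)=\mu_{\Gamma gN}^{\mathrm{PS}}(\phi)(1+O(\delta))$ via Lemma \ref{measdecomp}, gives
\[
I(y)=\sfrac{m_\Gamma^{\mathrm{BR}}(f)\,\mu_{\Gamma gN}^{\mathrm{PS}}(\phi)}{m_\Gamma^{\mathrm{BMS}}(\GaG)}\,y^{1-\delGa}+O\!\big(y^{1-\delGa+\eta_0}\|F_\delta\|_{\scrS^3(\GaG)}\|f\|_{\scrS^3(\GaG)}\big).
\]
Equating the two expressions and using the Sobolev scaling $\|F_\delta\|_{\scrS^3(\GaG)}\ll\delta^{-C}\|\phi\|_{W^3}$, I choose $\delta$ as a suitable small power of $y$ to balance the thickening, PS-approximation, and mixing errors, obtaining the stated bound with an explicit $\eta_1>0$.

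The main obstacle is the careful bookkeeping of normalization constants and Sobolev norms: the local density of $\mu_{\GaG}$ in $NAU$-coordinates at the identity must be compatible with the $\mathrm{BR_*}$-density in Lemma \ref{measdecomp} so that no spurious constant appears in the main term, and the exponent $C$ in the Sobolev scaling of $F_\delta$ (arising from $L^2$-normalization and three transverse Lie derivatives of $\psi_\delta$) must be controlled precisely for the optimization to close. Once these are in hand the balance is routine, and the resulting $\eta_1>0$ emerges as an explicit small positive fraction of $\eta_0$ determined by the constraints above.
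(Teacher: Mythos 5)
Your approach shares the paper's skeleton---thicken in the $AU$-directions, unfold a Haar integral, apply exponential mixing---but it takes a shortcut at precisely the step the paper warns about, and the shortcut creates a genuine gap.

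You bound the thickening discrepancy by invoking Lemma~\ref{diffbdd} directly on the Haar side, obtaining $I(y)=\int_{-\epsilon_\Gamma}^{\epsilon_\Gamma}\phi(t)f(\Gamma g n_t a_y)\,dt+O\big(\delta\,\|f\|_{\scrS^3(\GaG)}\|\phi\|_{W^3}\big)$. That error carries no factor of $y^{1-\delGa}$. To fit inside the asserted bound $O\big(y^{1-\delGa+\eta_1}\|f\|_{\scrS^3(\GaG)}\lbrace\|\phi\|_{W^3}+\mu_{\Gamma g N}^{\mathrm{PS}}(\phi)\rbrace\big)$ you need $\delta\ll y^{1-\delGa+\eta_1}$. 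On the other hand the mixing error $O\big(y^{1-\delGa+\eta_0}\|F_\delta\|_{\scrS^3(\GaG)}\|f\|_{\scrS^3(\GaG)}\big)$, with $\|F_\delta\|_{\scrS^3(\GaG)}\gg\delta^{-4}\|\phi\|_{W^3}$, forces $\delta\gg y^{(\eta_0-\eta_1)/4}$. The two requirements are compatible only if $\eta_0>4(1-\delGa)+5\eta_1$, in particular $\eta_0>4(1-\delGa)$. Since $\eta_0$ is a fixed (typically tiny) mixing rate and $\delGa$ can be arbitrarily close to $0$ for convex-cocompact $\Gamma$ (handling $\delGa\le\frac{1}{2}$ is the whole point of Section~\ref{deltalessthan1/2}), this condition fails in general, and no choice of $\delta=y^\alpha$ yields a positive $\eta_1$.

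The paper avoids this by never estimating $f(\Gamma h a_y h')-f(\Gamma h a_y)$ on the Haar side. It instead introduces the envelopes $f_\epsilon^{+}(\Gamma h):=\sup_{h'\in\scrB_\epsilon}f(\Gamma h h')$ and $f_\epsilon^{-}(\Gamma h):=\inf_{h'\in\scrB_\epsilon}f(\Gamma h h')$, which give an exact sandwich of $\int_{-\epsilon_\Gamma}^{\epsilon_\Gamma}f(\Gamma g n_ta_y)\phi(t)\,dt$ between two Haar integrals; applies Theorem~\ref{expmixing} to $f_\epsilon^{\pm}$; and only then uses Lemma~\ref{diffbdd} to replace $m_\Gamma^{\mathrm{BR}}(f_\epsilon^{\pm})$ by $m_\Gamma^{\mathrm{BR}}(f)+O(\epsilon\|f\|_{\scrS^3(\GaG)})$. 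That $O(\epsilon)$ now sits next to the factor $\mu_{\Gamma g N}^{\mathrm{PS}}(\phi)\,y^{1-\delGa}/m_\Gamma^{\mathrm{BMS}}(\GaG)$ coming from the mixing main term, so the thickening error inherits the $y^{1-\delGa}$ scaling, and the balance $\epsilon=y^{\eta_0/5}$ closes uniformly in $\delGa$ with $\eta_1=\eta_0/5$. This is exactly the ``main complication'' flagged at the start of Section~\ref{translatessec}: in infinite volume, Haar and Lebesgue mass can dwarf PS and BR mass, so one must postpone bounding function approximations until after the mixing step.
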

\begin{remark}
We have previously only defined the measures $m^{\mathrm{PS}}_{\Gamma g N}$ for radial points $\Gamma g$. While we will only need Theorem \ref{translates} for the radial points, we note that since $\Gamma$ is convex-cocompact, the map from $N$ to $\GaG$ given by $n\mapsto \Gamma g n$ is injective for all $\Gamma g\in \GaG$; the definition given in Section \ref{PS-N} therefore still works for all $\Gamma g\in \GaG$. It is in the case that $\Gamma$ is not convex-cocompact that more care is required in the definition; this is due to the presence of periodic horocycles around the cusps of $\GaG$, cf.\ \cite[Section 2]{MoOh}. 
\end{remark}

\begin{proof}
We start by defining, for $\epsilon\leq \epsilon_{\Gamma}$, functions $f_{\epsilon}^+$ and $f_{\epsilon}^-$ by
\begin{equation*}
f_{\epsilon}^+(\Gamma g):= \sup_{h\in \scrB_{\epsilon}} f(\Gamma g h), \qquad f_{\epsilon}^{-}(\Gamma g):= \inf_{h\in \scrB_{\epsilon}} f(\Gamma g h).
\end{equation*}
Observe that $f_{\epsilon}^{\pm}\in \scrS^3(\Omega \scrB_{\epsilon})$ and by Lemma \ref{diffbdd}, $|f(\Gamma g)- f_{\epsilon}^{\pm}(\Gamma g)|\ll_{\Gamma} \epsilon \|f\|_{\scrS^3(\GaG)}$.

By \cite[Lemma 2.4.7]{KleinMarg}, given $\epsilon>0$, there exists $\rho_\epsilon\in \scrB_{\epsilon}\cap C_c^{\infty}(AU)$ such that:
\begin{equation*}
\rho_{\epsilon}(a_yn^*_u)\geq 0 \; \forall v\in\RR_{>0}\;u\in\RR, \quad \int_{\RR_{>0}}\int_{\RR}\rho_{\epsilon}(a_vn^*_u)\,\frac{ du\,dv}{v^2}=1.
\end{equation*}
We now define a function $\Phi_{\epsilon}\in C_c^{\infty}(\GaG)$ by
\begin{equation*}
\Phi_{\epsilon}(\Gamma h)=\begin{cases} \phi(t)\rho_{\epsilon}(a_vn^*_u)\qquad&\mathrm{if}\; \Gamma h = \Gamma g n_t a_v n^*_u\\
0\qquad&\mathrm{otherwise.} 
\end{cases}
\end{equation*}
Observe that $\Phi_{\epsilon}$ is well-defined is due to the uniqueness of the $NAU$ decomposition, and that $\epsilon\leq \epsilon_{\Gamma}\leq \frac{r_{\Gamma}}{2}$ (which is less than the injectivity radius of $\GaG$); $\Phi_{\epsilon}$ is thus supported on $\Gamma g \scrB_{\epsilon_{\Gamma}}\subset \Omega \scrB_{\epsilon_{\Gamma}}$. 
Using this definition, we have
\begin{align*}
\int_{-\epsilon_{\Gamma}}^{\epsilon_{\Gamma}} f(\Gamma g n_t a_y) \phi(t)\,dt&=\int_{-\epsilon_{\Gamma}}^{\epsilon_{\Gamma}} f(\Gamma g n_t a_y) \phi(t)\,dt\left(\int_{\RR_{>0}}\int_{\RR}\rho_{\epsilon}(a_vn^*_u)\,\sfrac{ du\,dv}{v^2} \right)
\\=&\int_{-\epsilon_{\Gamma}}^{\epsilon_{\Gamma}} \int_{\RR_{>0}}\int_{\RR}f(\Gamma g n_t a_y) \phi(t)\rho_{\epsilon}(a_v n^*_u)\,\sfrac{ dt\,dv\,du}{v^2}
\\=&\int_{-\epsilon_{\Gamma}}^{\epsilon_{\Gamma}} \int_{\RR_{>0}}\int_{\RR}f\big(\Gamma g n_ta_v n^*_u a_y(a_v  n^*_{yu})^{-1} \big) \Phi_{\epsilon}(\Gamma g n_t a_v n^*_u)\,\sfrac{ dt\,dv\,du}{v^2}.
\end{align*}
Since $y\leq 1$ and $a_vn_u^*\in \scrB_{\epsilon}$, $(a_v n^*_{yu})^{-1}\in\scrB_{\epsilon}$, hence
\begin{equation*}
f_{\epsilon}^{-} (\Gamma g n_t a_v n^*_u a_y)\leq f\big(\Gamma g n_ta_v n^*_u a_y(a_v  n^*_{yu})^{-1} \big)\leq f_{\epsilon}^{+} (\Gamma g n_t a_v n^*_u a_y).
\end{equation*}
Now, $d\mu_G(n_x a_v n^*_u)=\frac{dx\,dv\,du}{v^2}$; we may thus bound the integral we are concerned with as follows:
\begin{equation*}
\int_{\scrB_{\epsilon_{\Gamma}}} 
f_{\epsilon}^-(\Gamma h a_y)\Phi_{\epsilon}(\Gamma h)\,d\mu_G(h)\leq \int_{-\epsilon_{\Gamma}}^{\epsilon_{\Gamma}} f(\Gamma g n_t a_y) \phi(t)\,dt \leq \int_{\scrB_{\epsilon_{\Gamma}}} 
f_{\epsilon}^+(\Gamma h a_y)\Phi_{\epsilon}(\Gamma h)\,d\mu_G(h).
\end{equation*}
By Theorem \ref{expmixing}:
\begin{align*}
\int_{\scrB_{\epsilon_{\Gamma}}} 
f_{\epsilon}^{\pm}(\Gamma h a_y)\Phi_{\epsilon}(\Gamma h)\,d\mu_G(h)&=\int_{\Gamma g\scrB_{\epsilon_{\Gamma}}}f_{\epsilon}^{\pm}(\Gamma h a_y)\Phi_{\epsilon}(\Gamma h)\,d\mu_{\GaG}(\Gamma h)
\\=&\int_{\GaG}f_{\epsilon}^{\pm}(\Gamma h a_y)\Phi_{\epsilon}(\Gamma h)\,d\mu_{\GaG}(\Gamma h)
\\=\sfrac{m_{\Gamma}^{\mathrm{BR}}(f_{\epsilon}^{\pm})m_{\Gamma}^{\mathrm{BR_*}}(\Phi_{\epsilon})}{m_{\Gamma}^{\mathrm{BMS}}(\GaG)}&y^{1-\delGa} \!+O_{\Gamma,\Omega\scrB_{\epsilon_{\Gamma}}}\!\left(\!y^{1-\delGa +\eta_0}\|f_{\epsilon}^{\pm}\|_{\scrS^3(\GaG)}\|\Phi_{\epsilon}\|_{\scrS^3(\GaG)} \!\right).
\end{align*}
We have $\|f_{\epsilon}^{\pm}\|_{\scrS^3(\GaG)}\ll_{\Gamma,\Omega} \|f\|_{\scrS^3(\GaG)}$ (cf.\ \cite[(5.8), p.\ 868]{MohammadiOh}). Also, again appealing to \cite[Lemma 2.4.7]{KleinMarg} gives the bound $\|\Phi_{\epsilon}\|_{\scrS^3(\GaG)}\ll \|\phi\|_{W^3}\epsilon^{-4}$, hence
\begin{equation}\label{Obdd}
O_{\Gamma,\Omega\scrB_{\epsilon_{\Gamma}}}\!\left(\!y^{1-\delGa +\eta_1}\|f_{\epsilon}^{\pm}\|_{\scrS^3(\GaG)}\|\Phi_{\epsilon}\|_{\scrS^3(\GaG)} \!\right)=O_{\Gamma,\Omega}\!\left(\!y^{1-\delGa +\eta_0}\|f\|_{\scrS^3(\GaG)}\|\phi\|_{W^3}\epsilon^{-4} \!\right).
\end{equation} 
Since $m^{\mathrm{BR}}_{\Gamma}$ is locally finite, Lemma \ref{diffbdd} gives
\begin{equation}\label{mfepsBRbdd}
m_{\Gamma}^{\mathrm{BR}}(f_{\epsilon}^{\pm})=m_{\Gamma}^{\mathrm{BR}}(f)+m_{\Gamma}^{\mathrm{BR}}(f_{\epsilon}^{\pm}-f)=m_{\Gamma}^{\mathrm{BR}}(f)+O_{\Gamma,\Omega}(\epsilon \|f\|_{\scrS^3(\GaG)}).
\end{equation}
We now use Lemma \ref{mBRident} to compute $m_{\Gamma}^{\mathrm{BR_*}}(\Phi_{\epsilon})$:
\begin{align*}
m_{\Gamma}^{\mathrm{BR_*}}(\Phi_{\epsilon})=&\widetilde{m}^{\mathrm{BR_*}}\big([h\mapsto\Phi_{\epsilon}(\Gamma g h)]|_{\scrB_{\epsilon_{\Gamma}}}\big) 
\\&=\int_{-\epsilon_{\Gamma}}^{\epsilon_{\Gamma}}\iint\limits_{\;\;\lbrace a_vn_u^*\in \scrB_\epsilon\rbrace}\phi(t)\rho_{\epsilon}(a_yn^*_u) \,e^{\delGa\beta_{[gn_x]^-}(i,gn_x\cdot i)}v^{1-\delGa}\, du\, \,dv\,d\nu_i^{(\Gamma,i)}([gn_x]^-)
\\&=\int_{-\epsilon_{\Gamma}}^{\epsilon_{\Gamma}}\iint\limits_{\;\;\lbrace a_vn_u^*\in \scrB_\epsilon\rbrace}\phi(t)\rho_{\epsilon}(a_yn^*_u) \,e^{\delGa\beta_{[gn_x]^-}(i,gn_x\cdot i)}v^{3-\delGa}\, \sfrac{du\, \,dv}{v^2}\,d\nu_i^{(\Gamma,i)}([gn_x]^-).
\end{align*}
For $v\in\scrB_{\epsilon}$, $|\log v|\ll \epsilon$, so $v^{3-\delGa}=1+O_{\Gamma}(\epsilon)$, hence
\begin{align*}
m_{\Gamma}^{\mathrm{BR_*}}(\Phi_{\epsilon})=& \mu_{\Gamma gN}^{\mathrm{PS}}(\phi)\times \!\!\!\!\!\!\iint\limits_{\;\;\lbrace a_vn_u^*\in \scrB_\epsilon\rbrace}\!\!\!\!\!\!\rho_{\epsilon}(a_yn^*_u) \big(1+O_{\Gamma}(\epsilon)\big)\, \sfrac{du\, \,dv}{v^2}
\notag\\&=\mu_{\Gamma gN}^{\mathrm{PS}}(\phi)\big(1+O_{\Gamma}(\epsilon)\big).
\end{align*}
This, together with \eqref{mfepsBRbdd}, gives
\begin{align*}
m_{\Gamma}^{\mathrm{BR}}(f_{\epsilon}^{\pm})m_{\Gamma}^{\mathrm{BR_*}}(\Phi_{\epsilon})&=\left(m_{\Gamma}^{\mathrm{BR}}(f)+O_{\Gamma,\Omega}(\epsilon \|f\|_{\scrS^3(\GaG)})\right)\mu_{\Gamma gN}^{\mathrm{PS}}(\phi)\big(1+O_{\Gamma}(\epsilon)\big)
\\&=m_{\Gamma}^{\mathrm{BR}}(f)\mu_{\Gamma gN}^{\mathrm{PS}}(\phi)+O_{\Gamma,\Omega}\left( \epsilon m_{\Gamma}^{\mathrm{BR}}(f)\mu_{\Gamma gN}^{\mathrm{PS}}(\phi) + \epsilon  \|f\|_{\scrS^3(\GaG)}\mu_{\Gamma gN}^{\mathrm{PS}}(\phi)\right)
\\&=m_{\Gamma}^{\mathrm{BR}}(f)\mu_{\Gamma gN}^{\mathrm{PS}}(\phi)+O_{\Gamma,\Omega}\left(\epsilon \mu_{\Gamma gN}^{\mathrm{PS}}(\phi)\|f\|_{\scrS^3(\GaG)} \right).
\end{align*}
Combining this expression with \eqref{Obdd} yields
\begin{align*}
\int_{\scrB_{\epsilon_{\Gamma}}} 
f_{\epsilon}^{\pm}(\Gamma h a_y)\Phi_{\epsilon}(\Gamma h)\,d\mu_G(h)= &\sfrac{m_{\Gamma}^{\mathrm{BR}}(f)\mu_{\Gamma gN}^{\mathrm{PS}}(\phi)}{m_{\Gamma}^{\mathrm{BMS}}(\GaG)}y^{1-\delGa}
\\& +O_{\Gamma,\Omega}\left( y^{1-\delGa}\|f\|_{\scrS^3(\GaG)} \left\lbrace\epsilon \mu_{\Gamma gN}^{\mathrm{PS}}(\phi)+ y^{\eta_0}\epsilon^{-4}\|\phi\|_{W^3}\right\rbrace\right).
\end{align*}
Since $\int_{-\epsilon_{\Gamma}}^{\epsilon_{\Gamma}} f(\Gamma g n_t a_y) \phi(t)\,dt$ is bounded from above and below by the integrals in the right-hand side of this expression, the same must hold for it. Choosing $\epsilon=y^{\frac{\eta_0}{5}}$ then completes the proof, with $\eta_1=\frac{\eta_0}{5}$.
\end{proof} 

\subsection{The shadow lemma} The final step before proceeding with the proof of Theorem \ref{mainthm2} involves adapting the results of Section \ref{Shadowsec} to the case $\delGa\leq \frac{1}{2}$. For $\delGa \leq \frac{1}{2}$, the integral in \eqref{phi0def} still defines an eigenfunction of $-\Delta$ on $\GaH$ with eigenvalue $\delGa(1-\delGa)$ (cf.\ \cite{Patterson1,Patterson2}), however it is no longer in $L^2(\GaH)$; we thus define
\begin{equation*}
\widetilde{\phi}_0(\Gamma g):=\int_{\partial_{\infty}\HH} e^{-\delGa \beta_u(g\cdot i,i)}\,d\nu_i^{(\Gamma,i)}(u)
\end{equation*}
(i.e.\ we remove the constant $\scrN_{\Gamma}$ from the definition given in \eqref{phi0def} since it is not well-defined for $\delGa\leq \frac{1}{2}$). We note, however, that $\widetilde{\phi}_0$ is bounded:
\begin{lem}\label{bddlem}
Let $\Gamma$ be convex cocompact. Then $\widetilde{\phi}_0\in L^{\infty}(\GaG)$.
\end{lem}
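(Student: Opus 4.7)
The plan is to prove the strictly stronger estimate
\[
\widetilde{\phi}_0(\Gamma g)\ll_{\Gamma} e^{-\delGa\,\mathrm{dist}(g\cdot i,\,\mathrm{hull}(\Gamma))}\qquad\forall g\in G,
\]
which immediately yields $\widetilde{\phi}_0\in L^\infty(\GaG)$. Since $\widetilde{\phi}_0$ is smooth and $\Gamma$-invariant, and since $\mathrm{core}(\scrM)=\Gamma\backslash\mathrm{hull}(\Gamma)$ is compact by convex cocompactness, continuity together with $\Gamma$-equivariance handles the case $g\cdot i\in\mathrm{hull}(\Gamma)$. For $g\cdot i\notin\mathrm{hull}(\Gamma)$, I would write $w:=\pi_{\mathrm{hull}}(g\cdot i)$ and $d:=\mathrm{dist}(g\cdot i, w)$; the $\Gamma$-equivariance of the nearest-point projection onto $\mathrm{hull}(\Gamma)$ together with compactness of $\Gamma\backslash\partial\mathrm{hull}(\Gamma)$ lets me assume, after replacing $g$ by a suitable $\gamma g$, that $w$ lies in a fixed compact fundamental domain $\mathcal{D}\subset\partial\mathrm{hull}(\Gamma)$.

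Next I would expand the Busemann cocycle as
\[
\beta_u(g\cdot i, i)=\beta_u(g\cdot i, w)+\beta_u(w, i),\qquad u\in\Lambda(\Gamma),
\]
where the second term is $O_\Gamma(1)$ because $\mathrm{dist}(w,i)$ is uniformly bounded on $\mathcal{D}$. For the first term, I pick $h\in G$ with $h\cdot w=i$ and $h\cdot g\cdot i=ie^{d}$; by $G$-invariance of the Busemann cocycle together with the explicit upper-half-plane formula,
\[
\beta_u(g\cdot i, w)=\beta_{hu}(ie^d, i)=\log\frac{(hu)^2+e^{2d}}{e^d\big((hu)^2+1\big)}\ge d-\log\!\big((hu)^2+1\big).
\]

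The main obstacle is to establish a uniform bound $|hu|\le R=R(\Gamma)$ for all $u\in\Lambda(\Gamma)$ and all such $g$. Equivalently, I need to show that $h^{-1}\cdot\infty$ stays uniformly separated from $\Lambda(\Gamma)\subset\partial_\infty\HH$ as $g$ varies. By construction $h^{-1}\cdot\infty$ is the ideal endpoint of the outward geodesic ray from the hull at $w$, so it lies inside the open gap of $\Lambda(\Gamma)$ bounded by the two limit-point endpoints of the boundary geodesic of $\mathrm{hull}(\Gamma)$ passing through $w$. The assignment $w\mapsto h^{-1}\cdot\infty$ is continuous with compact domain $\mathcal{D}$ and image contained in the open set $\partial_\infty\HH\setminus\Lambda(\Gamma)$, so compactness forces the image to remain a definite visual distance from $\Lambda(\Gamma)$, yielding the required uniform $R$. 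Convex cocompactness is crucial here: near a cusp the outward direction would approach a parabolic fixed point, which itself belongs to $\Lambda(\Gamma)$, so the gap collapses and no uniform $R$ exists.

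Combining these estimates gives $\beta_u(g\cdot i, i)\ge d-O_\Gamma(1)$ uniformly in $u\in\Lambda(\Gamma)$, hence $e^{-\delGa\beta_u(g\cdot i, i)}\ll_\Gamma e^{-\delGa d}$; integrating against the probability measure $\nu_i^{(\Gamma,i)}$, which is supported on $\Lambda(\Gamma)$, yields the claimed bound $\widetilde{\phi}_0(\Gamma g)\ll_\Gamma e^{-\delGa d}\le 1$.
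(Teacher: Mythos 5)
Your proof is correct. The paper itself does not prove this lemma---it cites \cite[Proposition 4.2]{BurgerCanary} for the stronger fact that $\widetilde{\phi}_0$ decays outside $\mathrm{core}(\scrM)$---and your argument derives exactly that decay estimate $\widetilde{\phi}_0(\Gamma g)\ll_{\Gamma}e^{-\delGa\,\mathrm{dist}(g\cdot i,\,\mathrm{hull}(\Gamma))}$ directly from the definitions. The crux, and the only place where convex cocompactness is genuinely needed, is the uniform visual separation of the outward normal endpoint $h^{-1}\cdot\infty$ from $\Lambda(\Gamma)$; you identify this correctly, and your compactness argument for it is sound. Two small points worth making explicit in a write-up: the map $w\mapsto h^{-1}\cdot\infty$ is well-defined because the outward normal direction at $w\in\partial\,\mathrm{hull}(\Gamma)$ depends only on $w$ (not on $d$ or the particular $g$ projecting to $w$); and the separation you obtain in a fixed background metric on $\partial_\infty\HH$ translates to the uniform bound $|h\cdot u|\leq R$ because visual metrics based at points of the compact set $\overline{\mathcal{D}}$ are uniformly comparable to the visual metric from $i$. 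Neither affects the validity of the argument.
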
 
In fact, $\widetilde{\phi}_0$ decays outside the convex core of $\GaG$, cf.\ \cite[Proposition 4.2]{BurgerCanary}, though for $\delGa\leq\frac{1}{2}$ not fast enough so that $\widetilde{\phi}_0\in L^2(\GaG)$.

Since $\widetilde{\phi}_0(\Gamma g)\ll_{\Gamma} 1=\sYGa(\Gamma g)$, the results of Section \ref{Shadowsec} all hold even without the assumption $\delGa>\frac{1}{2}$. Moreover, simplifications occur due to the fact that we no longer have to take $\sYGa$ into account. Lemmas \ref{shadlem}, \ref{Tinftymeasure}, and \ref{diffmeas} in the convex cocompact setting read as follows:
\begin{lem}\label{shadlem1}
For all $z,\,w\in\HH$, $r>0$,
\begin{equation*}
\nu_z^{(\Gamma,i)}\big(\scrO_z(w,r)\big)\ll_{\Gamma} e^{2\delGa r-\delGa\mathrm{dist}(z,w)}. 
\end{equation*}
\end{lem}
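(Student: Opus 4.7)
The plan is to mirror the proof of Lemma \ref{shadlem} almost verbatim, substituting the bounded function $\widetilde{\phi}_0$ for the base eigenfunction $\phi_0$ (which is no longer square-integrable in this regime). The pointwise bound $|\phi_0(\Gamma g)| \ll_{\Gamma} \sYGa(\Gamma g)^{1-\delGa}$ that produced the $\sYGa(w)^{1-\delGa}$ factor in Lemma \ref{shadlem} will be replaced by the uniform bound $\widetilde{\phi}_0 \in L^{\infty}(\GaG)$ from Lemma \ref{bddlem}, which is exactly what eliminates that factor.

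First I would write $w = x + iy$, pick $z \in \HH$ (the result is independent of the choice by the conformal property \eqref{nuprops} once one shifts densities), and express
\[
\widetilde{\phi}_0(\Gamma n_x a_y) = \int_{\RR} e^{-\delGa\beta_u(w,i)}\,d\nu_i^{(\Gamma,i)}(u) = \int_{\RR} e^{-\delGa\beta_u(w,z)}\,d\nu_z^{(\Gamma,i)}(u),
\]
using the conformal transformation rule. Restricting the integral to the shadow $\scrO_z(w,r)$ and using the standard Busemann bound $\beta_u(z,w) \geq \dist(z,w) - 2r$ valid on that shadow, I get
\[
\widetilde{\phi}_0(\Gamma n_x a_y) \geq \int_{\scrO_z(w,r)} e^{\delGa\beta_u(z,w)}\,d\nu_z^{(\Gamma,i)}(u) \geq e^{\delGa(\dist(z,w) - 2r)}\,\nu_z^{(\Gamma,i)}\bigl(\scrO_z(w,r)\bigr).
\]
Rearranging and applying Lemma \ref{bddlem} to bound $\widetilde{\phi}_0(\Gamma n_x a_y) \ll_\Gamma 1$ yields
\[
\nu_z^{(\Gamma,i)}\bigl(\scrO_z(w,r)\bigr) \leq e^{2\delGa r - \delGa\dist(z,w)}\,\widetilde{\phi}_0(\Gamma n_x a_y) \ll_{\Gamma} e^{2\delGa r - \delGa\dist(z,w)},
\]
which is the claim.

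There is no serious obstacle here: the only nontrivial input is Lemma \ref{bddlem}, whose proof is referenced separately and relies on convex-cocompactness (so that $\widetilde{\phi}_0$ actually decays outside the convex core, cf.\ \cite{BurgerCanary}). Everything else is a direct transcription of the argument for Lemma \ref{shadlem}, with the simplification that no invariant-height bookkeeping is needed.
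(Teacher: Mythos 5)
Your proof is correct and matches exactly what the paper intends: the paper does not spell out the argument but simply remarks that since $\widetilde{\phi}_0(\Gamma g)\ll_{\Gamma} 1$ (Lemma \ref{bddlem}), all the results of Section \ref{Shadowsec} go through with the factor $\sYGa(\cdot)^{1-\delGa}$ replaced by $1$, which is precisely the substitution you carry out in the Lemma \ref{shadlem} argument.
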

\begin{lem}\label{Tinftymeasure1}
\begin{equation*}
\nu_i^{(g^{-1}\Gamma g,i)}\big( \lbrace x\in\RR\,:\, |x|\geq T\rbrace\big)\ll_{\Gamma}\scrP_{\Gamma}(\Gamma g) T^{-\delGa} \qquad \forall g\in G,\,T\geq 1.
\end{equation*}
\end{lem}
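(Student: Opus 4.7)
The plan is to mirror the proof of Lemma \ref{Tinftymeasure}, which already contains all the structural ingredients; the only difference is that Lemma \ref{shadlem} gets replaced by Lemma \ref{shadlem1}, so the $\sYGa(\Gamma g a_T)^{1-\delGa}$ factor simply disappears. (Indeed, since $\Gamma$ is convex cocompact, $\sYGa \equiv 1$ on $\GaG$ by Proposition \ref{Yprops}\textit{(4)}, which is a priori consistent.)

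First I would observe, exactly as in Lemma \ref{Tinftymeasure}, that
\begin{equation*}
\lbrace x\in\RR : |x|\geq T\rbrace \cup \lbrace\infty\rbrace = \scrO_i(iT,\mathrm{arcsinh}(1)),
\end{equation*}
and that the point at infinity may be discarded because the Patterson-Sullivan measures are non-atomic. Next, apply Lemma \ref{conjlem} to rewrite
\begin{equation*}
\nu_i^{(g^{-1}\Gamma g,i)}\big(\scrO_i(iT,\mathrm{arcsinh}(1))\big) = (g^{-1})_*\nu_{g\cdot i}^{(\Gamma,g\cdot i)}\big(\scrO_i(iT,\mathrm{arcsinh}(1))\big),
\end{equation*}
then use the scaling identity \eqref{nuscaling} to change the second basepoint from $g\cdot i$ to $i$, extracting a factor of $\scrP_{\Gamma}(\Gamma g)$. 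Since $G$ acts on $\HH\cup\partial_{\infty}\HH$ by isometries, the pushforward converts the shadow into $\scrO_{g\cdot i}(g\cdot iT,\mathrm{arcsinh}(1))$.

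At this point, Lemma \ref{shadlem1} applied with $z = g\cdot i$, $w = g\cdot iT$, $r=\mathrm{arcsinh}(1)$ yields
\begin{equation*}
\nu_{g\cdot i}^{(\Gamma,i)}\big(\scrO_{g\cdot i}(g\cdot iT,\mathrm{arcsinh}(1))\big) \ll_{\Gamma} e^{2\delGa\mathrm{arcsinh}(1) - \delGa\mathrm{dist}(g\cdot i,\, g\cdot iT)} \ll_{\Gamma} e^{-\delGa\mathrm{dist}(i,iT)} = T^{-\delGa},
\end{equation*}
where the middle step uses the isometric action of $G$ on $\HH$. Combining these observations gives the stated bound. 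There is no substantive obstacle: the proof is a direct transcription of the argument for Lemma \ref{Tinftymeasure}, simplified by the fact that in the convex cocompact setting the invariant height function is trivial and so the Shadow Lemma (Lemma \ref{shadlem1}) no longer carries the $\sYGa$ correction.
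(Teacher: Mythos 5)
Your proof is correct and takes essentially the same approach as the paper: the paper explicitly states that Lemma \ref{Tinftymeasure1} follows by running the proof of Lemma \ref{Tinftymeasure} with Lemma \ref{shadlem1} in place of Lemma \ref{shadlem}, which (since $\sYGa\equiv 1$ for convex cocompact $\Gamma$) drops the $\sYGa(\Gamma g a_T)^{1-\delGa}$ factor — exactly the transcription you carried out.
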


\begin{lem}\label{diffmeas1}
\begin{align*}
&\nu_i^{(g^{-1}\Gamma g,i)}\big(\lbrace u\in\RR\,:\,(1-\epsilon)T\leq |u|\leq (1+\epsilon)T\rbrace \big)\ll_{\Gamma} \scrP_{\Gamma}(\Gamma g)\epsilon^{\delGa} T^{-\delGa}
\end{align*}
for all $g\in G$, $T\geq 2$, $0\leq\epsilon\leq\frac{1}{2}$. 
\end{lem}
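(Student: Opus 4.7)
The plan is to mirror the proof of Lemma \ref{diffmeas} essentially verbatim, with the key simplification that Lemma \ref{shadlem1} replaces Lemma \ref{shadlem}, eliminating the $\sYGa(\Gamma g a_T)^{1-\delGa}$ factor and, crucially, the $\epsilon^{\delGa-1}$ penalty that came from passing $\sYGa$ through an $a_\epsilon$ translate via Proposition \ref{Yprops}. By symmetry in the sign of $u$, I would only bound the contribution from $[(1-\epsilon)T,(1+\epsilon)T]$; the case of the negative interval is identical.

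First I would reduce to a shadow estimate: as in the proof of Lemma \ref{diffmeas}, one verifies that for all $T\geq 2$ and $0\leq \epsilon\leq \tfrac{1}{2}$,
\[
[(1-\epsilon)T,(1+\epsilon)T]\subset \scrO_i\big(T+i\epsilon T,\,r_0\big),
\]
with $r_0=\mathrm{arcsinh}(5)$ (this choice is already worked out in the earlier proof and does not depend on $\delGa$, so it carries over unchanged). Next I would apply Lemma \ref{conjlem} together with the $G$-equivariance $g\cdot \scrO_z(w,r)=\scrO_{g\cdot z}(g\cdot w,r)$, which yields
\[
\nu_i^{(g^{-1}\Gamma g, i)}\!\big(\scrO_i(T+i\epsilon T,r_0)\big)
=\scrP_\Gamma(\Gamma g)\,\nu_{g\cdot i}^{(\Gamma,i)}\!\big(\scrO_{g\cdot i}(g\cdot(T+i\epsilon T),r_0)\big).
\]

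Now I would invoke Lemma \ref{shadlem1} (with $z=g\cdot i$ and $w=g\cdot(T+i\epsilon T)$), giving
\[
\nu_{g\cdot i}^{(\Gamma,i)}\!\big(\scrO_{g\cdot i}(g\cdot(T+i\epsilon T),r_0)\big)\ll_\Gamma e^{2\delGa r_0-\delGa\,\mathrm{dist}(g\cdot i,\,g\cdot(T+i\epsilon T))}=e^{2\delGa r_0}\,e^{-\delGa\,\mathrm{dist}(i,T+i\epsilon T)},
\]
where the isometric action of $G$ was used in the last equality. The calculation from the proof of Lemma \ref{diffmeas} shows
\[
e^{-\delGa\,\mathrm{dist}(i,T+i\epsilon T)}\leq \epsilon^{\delGa}T^{-\delGa},
\]
and since $r_0$ is an absolute constant, combining the above gives the claimed bound
\[
\nu_i^{(g^{-1}\Gamma g, i)}\!\big(\lbrace u\in\RR:(1-\epsilon)T\leq |u|\leq (1+\epsilon)T\rbrace\big)\ll_\Gamma \scrP_\Gamma(\Gamma g)\,\epsilon^{\delGa}T^{-\delGa}.
\]

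There is no genuine obstacle here: all the geometric work—finding the enclosing shadow, computing the hyperbolic distance, and exploiting $G$-equivariance—was already done in Lemma \ref{diffmeas}. The only thing to verify is that the convex-cocompact form of Sullivan's shadow lemma (Lemma \ref{shadlem1}) is being applied correctly, which is immediate since $\sYGa\equiv 1$ removes the $\sYGa(w)^{1-\delGa}$ factor and with it the $\epsilon^{\delGa-1}$ damage. The upshot is the cleaner exponent $\epsilon^{\delGa}$ (rather than $\epsilon^{2\delGa-1}$), which is exactly what is needed to make subsequent estimates work uniformly in $\delGa\leq \tfrac{1}{2}$.
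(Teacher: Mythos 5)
Your proposal is correct and matches the paper's (implicit) argument: the paper states that the Section \ref{Shadowsec} proofs carry over verbatim once one observes that $\sYGa\equiv 1$ on convex-cocompact quotients, which is exactly what you do by substituting Lemma \ref{shadlem1} for Lemma \ref{shadlem} in the proof of Lemma \ref{diffmeas} and dropping the $\epsilon^{\delGa-1}$ factor coming from Proposition \ref{Yprops}.
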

Noting that Theorem \ref{Shadow} also holds for convex cocompact $\Gamma$ without the assumption $\delGa>\frac{1}{2}$, cf., e.g.., \cite[Theorem 4.6.2]{Nicholls}.   Proposition \ref{muPSbd} thus also holds, as well as Corollary \ref{BTCOMP}, which in the current setting reads as
\begin{cor}\label{BTCOMP1}
Let $\Omega\subset \GaG$ be compact. Then
\begin{equation*}
\mu_{\Gamma g N}^{\mathrm{PS}}(B_T)\gg_{\Omega}  T^{\delGa}  \qquad \forall \,\Gamma g \in \Omega\cap \GaG_{\mathrm{rad}},\;T\gg_{\Omega} 1.
\end{equation*}
\end{cor}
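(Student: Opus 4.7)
The plan is to deduce Corollary \ref{BTCOMP1} from Proposition \ref{muPSbd} (in the convex-cocompact form asserted in the paragraph immediately preceding the statement) by a routine compactness argument, completely parallel to the way Corollary \ref{BTCOMP} was obtained from Proposition \ref{muPSbd} in the previous section. The key simplification here is that, since $\Gamma$ is convex cocompact it has no parabolic fixed points, so Proposition \ref{Yprops}(4) gives $\sYGa \equiv 1$ on $\GaG$. Consequently the lower bound produced by Proposition \ref{muPSbd} reduces to
\[
\mu_{\Gamma g N}^{\mathrm{PS}}(B_T) \geq C_\Gamma(g)\, T^{\delGa}, \qquad T \geq D_\Gamma(g),
\]
valid for every $g \in G_{\mathrm{rad}}$, where $C_\Gamma, D_\Gamma : G \to \RR_{>0}$ are the continuous functions supplied by that proposition.

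The second step is to upgrade this pointwise bound to one that is uniform over $\Omega$. Since $\pi : G \to \GaG$ is a covering map and $\Omega$ is compact, one can choose a compact lift $\widetilde\Omega \subset G$ with $\pi(\widetilde\Omega) \supseteq \Omega$ (cover $\Omega$ by finitely many evenly covered open sets, select a single sheet over the closure of each, and take the union of these compact pieces). Setting
\[
C_\Omega := \inf_{g \in \widetilde\Omega} C_\Gamma(g), \qquad D_\Omega := \sup_{g \in \widetilde\Omega} D_\Gamma(g),
\]
continuity of $C_\Gamma, D_\Gamma$ and compactness of $\widetilde\Omega$ force $C_\Omega > 0$ and $D_\Omega < \infty$. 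Now for any $\Gamma g \in \Omega \cap \GaG_{\mathrm{rad}}$, choose a representative $g \in \widetilde\Omega$ (which automatically lies in $G_{\mathrm{rad}}$ since $G_{\mathrm{rad}}$ is $\Gamma$-invariant); then $\mu_{\Gamma g N}^{\mathrm{PS}}(B_T) \geq C_\Omega T^{\delGa}$ for all $T \geq D_\Omega$, which is exactly the asserted estimate.

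The only non-mechanical point is verifying that the constants $C_\Gamma$ and $D_\Gamma$ produced in the proof of Proposition \ref{muPSbd} genuinely depend continuously on $g$. This is a direct inspection: in that proof, $C_\Gamma(g)$ and $D_\Gamma(g)$ are built from $\mathrm{dist}(g \cdot i, i)$ and the Iwasawa coordinates $(x,y)$ of $g = k a_y n_x$, both of which are continuous functions of $g \in G$. No genuine obstacle arises; the work is entirely bookkeeping of the compactness reduction, and this is why the corollary is stated without an independent proof.
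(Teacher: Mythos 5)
Your proposal is correct and follows essentially the same route as the paper: the paper deduces Corollary \ref{BTCOMP1} precisely by observing that for convex cocompact $\Gamma$ one has $\sYGa\equiv 1$, that Proposition \ref{muPSbd} still holds in this setting (via the convex-cocompact version of Theorem \ref{Shadow}), and that uniformity over the compact set $\Omega$ comes from the continuity of $C_{\Gamma}$ and $D_{\Gamma}$, exactly as in the passage from Proposition \ref{muPSbd} to Corollary \ref{BTCOMP}. Your explicit compact-lift bookkeeping is just the argument the paper leaves implicit.
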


\subsection{Proof of Theorem \ref{mainthm2}}
We start by assuming that $f$ is $\RR_{\geq 0}$-valued. For $r>0$, we have 
\begin{equation}\label{Tident}
\int_{-T}^T \!f(\Gamma g n_t)\,dt=\frac{T}{r}\int_{-r}^{r} \!f\big(\Gamma g n_{T t/r}\big)\,dt=\frac{T}{r}\int_{-r}^{r} \!f\big(\Gamma ga_{T/r} n_{ t}a_{T/r}^{-1}\big)\,dt.
\end{equation}
By \cite[Lemma 2.4.7]{KleinMarg}, given $\epsilon>0$, there exists $\psi_\epsilon\in C_c^{\infty}\big((-\epsilon,\epsilon)\big)$ such that:
\begin{equation*}
\psi_{\epsilon}(x)\geq 0 \; \forall x\in\RR, \quad \int_{\RR}\psi_{\epsilon}(x)\,dx=1,\quad \|\psi_{\epsilon}\|_{W^3}\ll \epsilon^{-\frac{7}{2}}.
\end{equation*}
For $\epsilon<\epsilon_{\Gamma}$ ($\epsilon_{\Gamma}$ being as in Section \ref{translatessec}), let $\chi_{\epsilon}=\psi_{\epsilon/2}\ast\mathbbm{1}_{[-\epsilon_{\Gamma}+\epsilon/2,\epsilon_{\Gamma}-\epsilon/2]}$, i.e.
\begin{equation*}
\chi_{\epsilon}(x)=\int_{-\epsilon_{\Gamma}+\frac{\epsilon}{2}}^{\epsilon_{\Gamma}-\frac{\epsilon}{2}}\psi_{\epsilon}(x-u)\,du.
\end{equation*}
Observe that $0\leq \chi_{\epsilon}(x)\leq 1$ and
\begin{equation*}
\chi_{\epsilon}(x)=\begin{cases} 1\qquad&\mathrm{if\;} |x|\leq \epsilon_{\Gamma}-\epsilon\\0\qquad&\mathrm{if\;} |x|\geq \epsilon_{\Gamma}\end{cases}
\end{equation*}
for all $x\in\RR$. Note also that 
\begin{align*}
\int_{\RR} |\sfrac{d^j}{dx^j}\chi_{\epsilon}(x)|^2\,dx=\int_{-\epsilon_{\Gamma}}^{\epsilon_{\Gamma}} \left|\int_{-\epsilon_{\Gamma}+\frac{\epsilon}{2}}^{\epsilon_{\Gamma}-\frac{\epsilon}{2}}\sfrac{d^j}{dx^j}\psi_{\epsilon/2}(x-u)\,du\right|^2\,dx\leq 2\epsilon_{\Gamma}\int_{-\frac{\epsilon}{2}}^{\frac{\epsilon}{2}} |\psi_{\epsilon/2}^{(j)}(u)|^2\,du,
\end{align*}
so $\|\chi_{\epsilon}\|_{W^2}\ll_{\Gamma} \|\psi_{\epsilon}\|_{W^2}\ll \epsilon^{-\frac{7}{2}}$.
This choice of $\chi_{\epsilon}$ and the fact that $f(\Gamma h)\geq 0$ for all $\Gamma h\in\GaG$ gives
\begin{equation}\label{ineq1}
\int_{-T}^T \!f(\Gamma g n_t)\,dt= \frac{T}{\epsilon_{\Gamma}}\int_{-\epsilon_{\Gamma}}^{\epsilon_{\Gamma}} \!f\big(\Gamma ga_{T/\epsilon_{\Gamma}} n_{ t}a_{T/\epsilon_{\Gamma}}^{-1}\big)\,dt\geq  \frac{T}{\epsilon_{\Gamma}}\int_{-\epsilon_{\Gamma}}^{\epsilon_{\Gamma}} \!f\big(\Gamma ga_{T/\epsilon_{\Gamma}} n_{ t}a_{T/\epsilon_{\Gamma}}^{-1}\big)\chi_{\epsilon}(t)\,dt,
\end{equation}
and
\begin{align}\label{ineq2}
\int_{-T}^T \!f(\Gamma g n_t)\,dt= \frac{T}{\epsilon_{\Gamma}-\epsilon}&\int_{-(\epsilon_{\Gamma}-\epsilon)}^{\epsilon_{\Gamma}-\epsilon} \!f\big(\Gamma ga_{T/(\epsilon_{\Gamma}-\epsilon)} n_{ t}a_{T/(\epsilon_{\Gamma}-\epsilon)}^{-1}\big)\,dt
\\\notag&\leq  \frac{T}{\epsilon_{\Gamma}-\epsilon}\int_{-\epsilon_{\Gamma}}^{\epsilon_{\Gamma}} \!f\big(\Gamma ga_{T/(\epsilon_{\Gamma}-\epsilon)} n_{ t}a_{T/(\epsilon_{\Gamma}-\epsilon)}^{-1}\big)\chi_{\epsilon}(t)\,dt.
\end{align}
Define $\Omega_{\Gamma g}:= \overline{\Omega \cup \lbrace \Gamma g a_y\,\:\, y\geq 1\rbrace}$. Since $\Gamma g \in \GaG_{\mathrm{rad}}$ and $\Gamma $ is convex cocompact, $\Omega_{\Gamma g}$ is compact. Assuming $T\geq \epsilon_{\Gamma}$ then allows us to apply Theorem \ref{translates}: for $r\leq \epsilon_{\Gamma}$, 
\begin{align*}
&\frac{T}{r}\int_{-\epsilon_{\Gamma}}^{\epsilon_{\Gamma}} \!f\big(\Gamma ga_{T/r} n_{ t}a_{T/r}^{-1}\big)\chi_{\epsilon}(t)\,dt
\\&=\frac{T}{r}\left( \sfrac{m_{\Gamma}^{\mathrm{BR}}(f)\mu_{\Gamma ga_{T/r}N}^{\mathrm{PS}}(\chi_{\epsilon})}{m_{\Gamma}^{\mathrm{BMS}}(\GaG)}\left( \sfrac{r}{T}\right)^{1-\delGa} \!\!+\!O_{\Gamma,\Omega_{\Gamma g}}\!\left(\!\left( \sfrac{r}{T}\right)^{1-\delGa +\eta_1}\|f\|_{\scrS^3(\GaG)}\lbrace\|\chi_{\epsilon}\|_{W^3}\!+\!\mu_{\Gamma ga_{T/r}N}^{\mathrm{PS}}(\chi_{\epsilon})\rbrace \!\right)
 \right)
\\&=\sfrac{m_{\Gamma}^{\mathrm{BR}}(f)\mu_{\Gamma ga_{T/r}N}^{\mathrm{PS}}(\chi_{\epsilon})}{m_{\Gamma}^{\mathrm{BMS}}(\GaG)}\left(\frac{T}{r}\right)^{\delGa}+O_{\Gamma,\Omega,\Gamma g}\left(\left(\frac{T}{r}\right)^{\delGa-\eta_1}\|f\|_{\scrS^3(\GaG)}\left\lbrace\epsilon^{-\frac{7}{2}}+\mu_{\Gamma ga_{T/r}N}^{\mathrm{PS}}(\chi_{\epsilon})\right\rbrace\right).
\end{align*}
We now observe that
\begin{align*}
\mu^{\mathrm{PS}}_{\Gamma g a_{T/r} N}(\chi_{\epsilon})=\mu^{\mathrm{PS}}_{\Gamma g a_{T/r} N}(B_{r})+O\left( \mu^{\mathrm{PS}}_{\Gamma g a_{T/r} N}(\lbrace t\in\RR\,:\, \min\lbrace \epsilon_{\Gamma}-\epsilon,r\rbrace \leq |t|\leq \epsilon_{\Gamma}\rbrace)\right)
\end{align*}
and so Lemma \ref{mPSscaling} gives
\begin{equation*}
\mu^{\mathrm{PS}}_{\Gamma g a_{T/r} N}(\chi_{\epsilon})\!=\!\!\left( \frac{T}{r}\right)^{-\delGa}\!\!\!\left(\mPSGgN(B_T)+O\left(\mPSGgN\big( \lbrace t\in\RR\,:\, \frac{\min\lbrace \epsilon_{\Gamma}-\epsilon,r\rbrace}{r}T\leq |t|\leq \frac{\epsilon_{\Gamma}}{r}T\rbrace\big) \right)\right).
\end{equation*}
Since our choices of $r$ are $r=\epsilon_{\Gamma}$ and $r=\epsilon_{\Gamma}-\epsilon$, in both cases we have
\begin{align*}
\mPSGgN\big( \lbrace t\in\RR\,:\, \sfrac{\min\lbrace \epsilon_{\Gamma}-\epsilon,r\rbrace}{r}T\leq |t|\leq \frac{\epsilon_{\Gamma}}{r}T\rbrace\big)\leq \mPSGgN\big( \lbrace t\in\RR\,:\, (1-\sfrac{\epsilon} {\epsilon_{\Gamma}})T\leq |t|\leq (1+\sfrac{\epsilon}{\epsilon_{\Gamma}-\epsilon})T\rbrace\big).
\end{align*}
Assuming $\epsilon\leq \min\lbrace \frac{1}{2},\frac{\epsilon_{\Gamma}}{4}\rbrace$, by the definition of $\mPSGgN$ and Lemma \ref{diffmeas1}, we have
\begin{align*}
&\mPSGgN\big( \lbrace t\in\RR\,:\, (1-\sfrac{\epsilon} {\epsilon_{\Gamma}})T\leq |t|\leq (1+\sfrac{\epsilon}{\epsilon_{\Gamma}-\epsilon})T\rbrace\big) 
\\&\qquad\ll_{\Gamma g} T^{2\delGa}\times \nu_i^{(g^{-1}\Gamma g,i)}\big( \lbrace t\in\RR\,:\, (1-\sfrac{\epsilon} {\epsilon_{\Gamma}/2})T\leq |t|\leq (1+\sfrac{\epsilon}{\epsilon_{\Gamma}/2})T\rbrace\big)
\\&\qquad\qquad\ll_{\Gamma g}T^{2\delGa}\times\epsilon^{\delGa}T^{-\delGa}=\epsilon^{\delGa}T^{\delGa}.
\end{align*}
This gives
\begin{align*}
&\frac{T}{r}\int_{-\epsilon_{\Gamma}}^{\epsilon_{\Gamma}} \!f\big(\Gamma ga_{T/r} n_{ t}a_{T/r}^{-1}\big)\chi_{\epsilon}(t)\,dt
=\sfrac{m_{\Gamma}^{\mathrm{BR}}(f)\mPSGgN(B_T)}{m_{\Gamma}^{\mathrm{BMS}}(\GaG)}
\\&\qquad+O_{\Gamma,\Omega,\Gamma g}\bigg(m_{\Gamma}^{\mathrm{BR}}(f)T^{\delGa}\epsilon^{\delGa}
+\|f\|_{\scrS^3(\GaG)}
\left\lbrace T^{\delGa-\eta_1}\epsilon^{-\frac{7}{2}}+T^{-\eta_1}\mPSGgN(B_T) +T^{\delGa-\eta_1}\epsilon^{\delGa}\right\rbrace\bigg).
\end{align*}
Since $\int_{-T}^T \!f\big(\Gamma g n_{ t})\,dt$ is bounded from above and below by the integrals in the right-hand side of this expression (cf.\ \eqref{ineq1} and \eqref{ineq2}), the same must hold for it. Dividing by $\mPSGgN(B_T)$ and using the bounds $m_{\Gamma,N}^{\mathrm{BR}}(f)\ll_{\Gamma,\Omega}\|f\|_{\scrS^2(\GaG)}
$ and $\mPSGgN(B_T)\gg_{\Gamma,\Gamma g} T^{\delGa}$ then yields
\begin{align*}
\frac{1}{\mPSGgN(B_T)}\!\!\int_{-T}^T \!f\big(\Gamma g n_{ t})\,dt\!=\!\sfrac{m_{\Gamma}^{\mathrm{BR}}(f)}{m_{\Gamma}^{\mathrm{BMS}}(\GaG)}
\!+\!O_{\Gamma,\Omega,\Gamma g}\!\bigg(\!\|f\|_{\scrS^3(\GaG)}\!\left\lbrace\!\epsilon^{\delGa}\!+\!T^{-\eta_1}\epsilon^{-\frac{7}{2}}\!+\!T^{-\eta_1} \!+\!T^{-\eta_1}\epsilon^{\delGa}\!\right\rbrace\!\!\bigg).
\end{align*} 
Choosing $\epsilon=T^{-\frac{\eta_1}{\delGa+7/2}}$ gives
\begin{equation}\label{poseffeq}
\frac{1}{\mPSGgN(B_T)}\!\!\int_{-T}^T \!f\big(\Gamma g n_{ t})\,dt=\frac{m_{\Gamma}^{\mathrm{BR}}(f)}{m_{\Gamma}^{\mathrm{BMS}}(\GaG)}
\!+\!O_{\Gamma,\Omega,\Gamma g}\!\bigg(\!\|f\|_{\scrS^3(\GaG)}T^{-\widetilde{\eta}_{\Gamma}}\bigg),
\end{equation}
where $\widetilde{\eta}_{\Gamma}=\frac{\delGa\,\eta_1}{\delGa+7/2}$. Theorem \ref{mainthm2} is thus proved for non-negative functions.

In order to generalize to all functions in $\scrS^3(\Omega)$, we first that note that if $f\in \scrS^3(\Omega)$, then $\Im(f)$, $\Re(f)\in\scrS^3(\Omega)$, and $\|\Re(f)\|_{\scrS^3(\GaG)}\ll \|f\|_{\scrS^3(\GaG)}$ and $\|\Im(f)\|_{\scrS^3(\GaG)}\ll \|f\|_{\scrS^3(\GaG)}$, so by considering the real and imaginary parts it suffices to to extend \eqref{poseffeq} to $\RR$-valued $f\in\scrS^3(\Omega)$. 

By Lemma \ref{diffbdd}, there exists $C=C(\Gamma)$ such that if $\disG{h}{e}<\epsilon$, then $|f(\Gamma g h)-f(\Gamma g)|\leq C\epsilon\|f\|_{\scrS^3(\GaG)}$. Using this, we assume now that $f$ is $\RR$-valued, and for $\epsilon>0$, define sets $\Omega^+_{\epsilon}(f),\,\Omega_-\epsilon(f)\subset \Omega$ by
\begin{equation*}
\Omega_{\epsilon}^{\pm}(f)=\lbrace \Gamma h \in \Omega\,:\, (\pm 1)f(\Gamma h)>C\epsilon\|f\|_{\scrS^3(\GaG)}\rbrace.
\end{equation*}
We now turn again to \cite[Proposition 2.4.7]{KleinMarg}: for all $0<\epsilon<1$ there exists $\rho_{\epsilon}\in C_c^{\infty}(\scrB_{\epsilon})$ such that
\begin{equation*}
\rho_{\epsilon}(h)\geq 0 \quad\forall h\in G,\qquad\int_{\scrB_{\epsilon}}\rho_{\epsilon}(h)\,d\mu_G(h)=1,\qquad \|\rho_{\epsilon}\|_{\scrS^m(G)}\ll \epsilon^{-(m+3/2)},
\end{equation*}
where $\scrS^m(G)$ denotes the $m$-th order $L^2$-Sobolev norm on $G$ (defined analogously to $\scrS^m(\GaG)$). Define functions $\varphi_{f,\epsilon}^{\pm}$ on $\GaG$ by 
\begin{equation*}
\varphi_{f,\epsilon}^{\pm}(\Gamma h)=\mathbbm{1}_{\Omega_{\epsilon}^{\pm}(f)}\ast \rho_{\epsilon/2}(\Gamma h)=\int_{\scrB_{\epsilon/2}}\mathbbm{1}_{\Omega_{\epsilon}^{\pm}}(\Gamma h h')\rho_{\epsilon/2}(h'^{-1})\,d\mu_G(h').
\end{equation*} 
This definition gives $\mathrm{supp}(\varphi_{f,\epsilon}^{\pm})\subset \Omega_{\epsilon/2}^{\pm}(f)\subset \Omega$ and $(\pm1)f(\Gamma h)>C\epsilon\|f\|_{\scrS^3(\GaG)}\Rightarrow \varphi_{f,\epsilon}^{\pm}(\Gamma h)=1$. Note also that \begin{equation*}
\|\varphi_{f,\epsilon}^{\pm}\|_{\scrS^m(\GaG)}
\ll_{\Gamma,\Omega} \epsilon^{-(m+3/2)}.
\end{equation*}
We now use \eqref{poseffeq}:
\begin{align*}
\frac{1}{\mPSGgN(B_T)}\int_{-T}^{T}f(\Gamma g n_t)\,dt=&\frac{1}{\mPSGgN(B_T)}\int_{-T}^{T}\varphi_{f,\epsilon}^{+}(\Gamma g n_t)f(\Gamma g n_t)\,dt
\\&\quad-\frac{1}{\mPSGgN(B_T)}\int_{-T}^{T}\varphi_{f,\epsilon}^{-}(\Gamma g n_t)|f(\Gamma g n_t)|\,dt
\\&\quad+\frac{1}{\mPSGgN(B_T)}\int_{-T}^{T}\big[\mathbbm{1}_{\Omega}-\varphi_{f,\epsilon}^{+}-\varphi_{f,\epsilon}^{-}\big](\Gamma g n_t)f(\Gamma g n_t)\,dt
\end{align*}
\begin{align*}
=\sfrac{m_{\Gamma}^{\mathrm{BR}}(f)}{m_{\Gamma}^{\mathrm{BMS}}(\GaG)}
\!+\!O_{\Gamma,\Omega,\Gamma g}\!\Bigg(&m_{\Gamma}^{\mathrm{BR}}\big((\mathbbm{1}_{\Omega}-\varphi_{f,\epsilon}^{+}-\varphi_{f,\epsilon}^{-})f\big)\!+\!(\|f\varphi_{f,\epsilon}^{+}\|_{\scrS^3(\GaG)}+(\|f\varphi_{f,\epsilon}^{-}\|_{\scrS^3(\GaG)}))T^{-\widetilde{\eta}_{\Gamma}}
\\&\qquad+\frac{1}{\mPSGgN(B_T)}\int_{-T}^{T}\left|\big[\mathbbm{1}_{\Omega}-\varphi_{f,\epsilon}^{+}-\varphi_{f,\epsilon}^{-}\big](\Gamma g n_t)f(\Gamma g n_t)\right|\,dt\Bigg).
\end{align*}
The terms in the ``$O_{\Gamma,\Omega,\Gamma g}$" are dealt with individually:
\begin{align*}
\left|m_{\Gamma}^{\mathrm{BR}}\big((\mathbbm{1}_{\Omega}-\varphi_{f,\epsilon}^{+}-\varphi_{f,\epsilon}^{-})f\big)\right|\leq& \left\|(\mathbbm{1}_{\Omega}-\varphi_{f,\epsilon}^{+}-\varphi_{f,\epsilon}^{-})f\right\|_{L^{\infty}(\GaG)}m_{\Gamma}^{\mathrm{BR}}(\Omega)\leq C\epsilon \|f\|_{\scrS^3(\GaG)}m_{\Gamma}^{\mathrm{BR}}(\Omega)
\\&\ll_{\Gamma,\Omega} \epsilon \|f\|_{\scrS^3(\GaG)},
\end{align*}
\begin{equation*}
\|f\varphi_{f,\epsilon}^{\pm}\|_{\scrS^3(\GaG)}\ll_{\Gamma} \|f\|_{\scrS^4(\GaG)}\|\varphi_{f,\epsilon}^{\pm}\|_{\scrS^3(\GaG)}\ll_{\Gamma,\Omega} \|f\|_{\scrS^4(\GaG)} \epsilon^{-\frac{9}{2}}.
\end{equation*}
To bound $\frac{1}{\mPSGgN(B_T)}\int_{-T}^{T}\left|\big[\mathbbm{1}_{\Omega}-\varphi_{f,\epsilon}^{+}-\varphi_{f,\epsilon}^{-}\big](\Gamma g n_t)f(\Gamma g n_t)\right|\,dt$, we note that
\begin{align*}
\left|\big[\mathbbm{1}_{\Omega}-\varphi_{f,\epsilon}^{+}-\varphi_{f,\epsilon}^{-}\big](\Gamma g n_t)f(\Gamma g n_t)\right|\leq C\epsilon \|f\|_{\scrS^3(\GaG)}\mathbbm{1}_{\Omega}(\Gamma g n_t)\leq C\epsilon  \|f\|_{\scrS^3(\GaG)} [\mathbbm{1}_{\Omega}\ast\rho_{\epsilon}](\Gamma g n_t).
\end{align*}
Now, $\mathrm{supp}(\mathbbm{1}_{\Omega}\ast\rho_{\epsilon})=\Omega\scrB_{\epsilon}$, $\|\mathbbm{1}_{\Omega}\ast\rho_{\epsilon}\|_{L^{\infty}(\GaG)}=1$, and $\|\mathbbm{1}_{\Omega}\ast\rho_{\epsilon}\|_{\scrS^3(\GaG)}\ll_{\Gamma,\Omega} \epsilon^{-\frac{9}{2}}$. We apply \eqref{poseffeq} again:
\begin{align*}
\frac{1}{\mPSGgN(B_T)}&\int_{-T}^{T}\left|\big[\mathbbm{1}_{\Omega}-\varphi_{f,\epsilon}^{+}-\varphi_{f,\epsilon}^{-}\big](\Gamma g n_t)f(\Gamma g n_t)\right|\,dt
\\& \ll_{\Gamma} \frac{\epsilon \|f\|_{\scrS^3(\GaG)}}{\mPSGgN(B_T)}\int_{-T}^{T}[\mathbbm{1}_{\Omega}\ast\rho_{\epsilon}](\Gamma g n_t)\,dt
\\&=\epsilon\|f\|_{\scrS^3(\GaG)}\left(\frac{m_{\Gamma}^{\mathrm{BR}}(\mathbbm{1}_{\Omega}\ast\rho_{\epsilon})}{m_{\Gamma}^{\mathrm{BMS}}(\GaG)}
\!+\!O_{\Gamma,\Omega\scrB_{\epsilon},\Gamma g}\!\bigg(\!\|\mathbbm{1}_{\Omega}\ast\rho_{\epsilon}\|_{\scrS^3(\GaG)}T^{-\widetilde{\eta}_{\Gamma}}\bigg) \right) 
\\&=O_{\Gamma,\Omega,\Gamma g}\left( \epsilon\|f\|_{\scrS^3(\GaG)}\left\lbrace 1 + \epsilon^{-\frac{9}{2}}T^{-\widetilde{\eta}}
\right\rbrace\right).
\end{align*}
In total, we have
\begin{align*}
\frac{1}{\mPSGgN(B_T)}\int_{-T}^{T}f(\Gamma g n_t)\,dt= \frac{m_{\Gamma}^{\mathrm{BR}}(f)}{m_{\Gamma}^{\mathrm{BMS}}(\GaG)}+O_{\Gamma,\Omega,\Gamma g}\!\bigg(\|f\|_{\scrS^4(\GaG)}\left\lbrace \epsilon + \epsilon^{-\frac{9}{2}}T^{-\widetilde{\eta}}\right\rbrace\bigg).
\end{align*}
Letting $\epsilon=T^{-\frac{2\widetilde{\eta}}{11}}$ completes the proof, with $\eta_{\Gamma}=\frac{2\widetilde{\eta}}{11}=\frac{2\delGa\eta_1}{11(\delGa+7/2)}$.

\hspace{425.5pt}\qedsymbol

\end{document}